\numberwithin{equation}{section}
\newtheorem{theorem}{Theorem}[section]
\newtheorem{proposition}[theorem]{Proposition}
\newtheorem{corollary}[theorem]{Corollary}
\newtheorem{lemma}[theorem]{Lemma}
\newtheorem*{conjecture}{Conjecture}
\theoremstyle{definition}
\newtheorem{definition}[theorem]{Definition}
\theoremstyle{remark}
\newtheorem{remark}[theorem]{Remark}
\newcommand{\ind}{\mathrm{ind}}
\newcommand{\area}{\mathrm{Area}}
\newcommand{\vol}{\mathrm{Vol}}
\newcommand{\length}{\mathrm{Length}}
\newcommand{\mD}{\mathcal{D}}
\newcommand{\mC}{\mathcal{C}}
\newcommand{\mR}{\mathcal{R}}
\newcommand{\del}{\partial}
\newcommand{\ip}[1]{\langle #1 \rangle}
\newcommand{\Span}{\mathrm{Span}}
\DeclareMathOperator*{\esssup}{ess\;sup}
\DeclareMathOperator{\sgn}{sgn}
\begin{document}
\title[Extremal metrics and  $n$-harmonic maps]{Laplace and Steklov extremal metrics via $n$-harmonic maps}
\begin{abstract}
We present a unified description of extremal metrics for
the Laplace and Steklov eigenvalues on manifolds of arbitrary dimension
using the notion of $n$-harmonic maps. Our approach extends the well-known 
results linking extremal metrics for eigenvalues on surfaces with minimal immersions 
and harmonic maps. In the process, we uncover two previously unknown features of 
the Steklov eigenvalues. First, we show that in higher dimensions there is a unique normalization
involving both the volume of the boundary and of the manifold itself, which leads to meaningful
extremal eigenvalue problems. Second, we observe that the critical points of the eigenvalue functionals in 
a fixed conformal class have a natural geometric interpretation provided one considers the Steklov 
problem with a density. As an example, we construct a family of free boundary harmonic annuli in 
the three-dimensional ball and conjecture that they correspond to metrics maximizing the first 
Steklov eigenvalue in  their respective conformal classes.
\end{abstract}

\author[M. Karpukhin]{Mikhail Karpukhin}
\address{Mathematics 253-37, Caltech, Pasadena, CA 91125, USA
}
\email{mikhailk@caltech.edu}
\author[A. M\'etras]{Antoine M\'etras}
\address{D\'epartement de math\'ematiques et de statistique, Universit\'e de Montr\'eal, CP 6128 succ Centre-Ville, Montr\'eal, QC H3C 3J7, Canada
}
\email{antoine.metras@umontreal.ca}
\maketitle

\section{Introduction}
\subsection{Geometric optimization of eigenvalues}
Let $(\Sigma^n,g)$ be a compact connected Riemannian $n$-dimensional manifold with non-empty boundary. The Steklov eigenvalues $\sigma_k(\Sigma^n,g)$ are defined to be the numbers $\sigma$ for which the following equation
\begin{equation*}
  \begin{cases}
    \Delta_g u = 0 & \text{in } \Sigma^n, \\
    \partial_\nu u = \sigma u & \text{on } \partial \Sigma^n
  \end{cases}
\end{equation*}
has a non-trivial solution. They form a sequence
$$
0=\sigma_0(\Sigma^n,g)<\sigma_1(\Sigma^n,g)\leqslant\ldots\nearrow\infty,
$$
where the numbers are repeated according to their multiplicity. For known results and open questions on the Steklov problem, see the review article \cite{GP}.

In the case of surfaces ($n=2$) there is a vast literature on the subject of sharp isoperimetric inequalities for Steklov eigenvalues. Namely, given a surface $\Sigma^2$, one looks for the sharp upper bounds on {\em normalized eigenvalues}
$$
\bar\sigma_k(\Sigma^2,g) = \sigma_k(\Sigma^2,g)\length(\partial\Sigma^2,g).
$$
The problem is of particular interest due to the connection with the theory of free boundary minimal surfaces established by Fraser and Schoen~\cite{FS2, FS3}. They showed that {\em extremal metrics} for $\bar\sigma_k$ (i.~e. critical points of $\bar\sigma_k(\Sigma^2,g)$ as a function of $g$, see Definition~\ref{extremal:def}) correspond to free boundary minimal surfaces in Euclidean balls. Furthermore, in~\cite{FS2} this connection is used to obtain sharp upper bounds for $\bar\sigma_1$ on the annulus and the M\"obius strip. In recent years there have been further developments in the study of the interactions between bounds for Steklov eigenvalues and free boundary minimal surfaces, see e.g.~\cite{MP, MP2, Pet3, GL, GKL, KS}, as well as the study of free boundary minimal submanifolds for general ambient manifolds, see the recent survey~\cite{Li}.

The theory of Steklov eigenvalues on surfaces with boundary is in many ways parallel to the theory of Laplace eigenvalues on closed surfaces. Recall that for $(M^n, g)$ a closed, connected Riemannian $n$-dimensional manifold, the Laplace eigenvalues $\lambda_k(M^n,g)$ are the numbers $\lambda$ such that the equation
\begin{equation*}
    \Delta_g u = \lambda u
\end{equation*}
has a non-trivial solution. They form an increasing sequence 
\begin{equation*}
    0 = \lambda_0(M^n,g) < \lambda_1(M^n,g) \leqslant \ldots \nearrow \infty,
\end{equation*} 
where the numbers are repeated according to their multiplicity. Finally, the {\em normalized eigenvalues} are defined as 
$$
\bar\lambda_k(M^n,g) = \lambda_k(M^n,g)\vol(M^n,g)^{\frac{2}{n}}.
$$
For $n=2$ there is a strong analogy between sharp upper bounds for $\bar\lambda_k$ and sharp upper bounds for $\bar\sigma_k$. In fact, the results of Fraser and Schoen were motivated by the results of Nadirashvili~\cite{NadirashviliTorus}, who established the correspondence between $\bar\lambda_k$-extremal metrics  and closed minimal surfaces in spheres. Similarly, one could consider {\em conformally extremal metrics}, i.e. critical points of $\bar\sigma_k$ and $\bar\lambda_k$ restricted to a fixed conformal class of metrics. The analogy persists: $\bar\lambda_k$-conformally extremal metrics correspond to harmonic maps to spheres, whereas $\bar\sigma_k$-conformally extremal metrics give rise to free boundary harmonic maps to balls. Table~\ref{surfaces:table} contains a brief summary of these results, see Section~\ref{results:sec} for a more detailed account.

The remarkable feature of this connection is that the extremal metric is  determined by a geometric object that is defined independently of the metric. Indeed, minimal surfaces only depend on the metric in the ambient manifold, and for $n=2$ the conformal invariance of the Dirichlet energy implies that the harmonic maps depend only on the conformal structure of the surface. As a result, questions about extremal metrics can be reformulated on the language of minimal surfaces and harmonic maps, see~\cite{EGJ, JNP, KNPP, KarpukhinRP2, KS, MS, NadirashviliTorus, Pet1, Pet2} for many applications of this approach to sharp eigenvalue bounds.

The goal of the present paper is to extend this analogy to higher dimensions $n\geqslant 3$. Surprisingly, there are only a few results on the subject of $\bar\lambda_k$-(conformally) extremal metrics~\cite{CES, ESI1,ESI2, M} and extremal metrics for Steklov eigenvalues were not studied at all. In~\cite{ESI2} the authors show that $\bar\lambda_k$-extremal metrics correspond to minimal submanifolds of the sphere and that $\bar\lambda_k$-conformally extremal metrics correspond to harmonic maps with additional properties. While the former is consistent with the case of surfaces, the latter is not since the Dirichlet energy is no longer conformally invariant for $n\geqslant 3$. Later, it was noted in~\cite{M} that the conditions on the map in~\cite{ESI2} are equivalent to the property of being $n$-harmonic, which is a conformally invariant property. In the present paper we show that $n$-harmonic maps naturally appear in the study of conformally extremal metrics for Steklov eigenvalues. In particular, the formalism of $n$-harmonic maps allows one to extend the analogy between the Laplacian and Steklov problems to all dimensions in a unified way. Our main results are summarized in Table~\ref{highd:table}. The results for Steklov eigenvalues in the third column are the main contribution of the present paper, see Section~\ref{results:sec} for a more detailed account. 

While our description of $\bar\lambda_k$-(conformally) extremal metrics presented in Theorems~\ref{Laplacianconf:thm},~\ref{Laplace-extremal:thm} is mostly a reformulation of the one presented in~\cite{ESI2, M}, the situation is quite different in the Steklov case. We encounter several challenges related to previously undiscovered features of the higher-dimensional Steklov problem. They are outlined below.

\subsection{Normalization of Steklov eigenvalues} 
The first question is how to define $\bar\sigma_k$, i.e. what is the natural normalization for Steklov eigenvalues when $n\geqslant 3$. Up until now, in most papers the preferred normalization of Steklov eigenvalues is either by $\vol(\partial\Sigma^n,g)$ or by $\vol(\Sigma^n,g)$. For example, the papers~\cite{CEG1, FS1, FS4, Has} contain some non-sharp upper bounds, whereas in~\cite{CG, CEG2} the authors discuss flexibility of the Steklov spectrum, i.e. they construct examples showing that it is impossible to obtain upper bounds in various contexts. 

We propose the following definition,
\begin{equation}
\label{barsigmak:eq}
\bar\sigma_k(\Sigma^n,g) = \sigma_k(\Sigma^n,g)\vol(\partial\Sigma^n,g)\vol(\Sigma^n,g)^\frac{2-n}{n}.
\end{equation}
This normalization appears as an intermediate step in all the papers on upper bounds in higher dimensions mentioned above. For example, it appears in~\cite{CEG1, Has} due to the fact that the authors are using the approach of metric-measure spaces developed in~\cite{Korevaar, GNY}. For any Radon measure $\mu$ on  $(\Sigma^n,g)$ one defines its eigenvalues via the variational characterization
\begin{equation}
\label{measureev:eq}
\lambda_k(\Sigma^n,g,\mu) = \inf_{F_{k+1}}\sup_{u\in F_{k+1}\setminus \{0\}}\frac{\displaystyle\int |d u|_g^2\,dv_g}{\displaystyle\int u^2\,d\mu},
\end{equation}
where $F_{k+1}$ varies over $(k+1)$-dimensional subspaces of $C^\infty(M)$ that remain $(k+1)$-dimensional in $L^2(\mu)$, see e.g.~\cite{Kokarev, GKL, KS}. These eigenvalues have the following scaling property $\lambda_k(\Sigma^n,tg,s\mu) =s^{-1} t^{\frac{n-2}{2}}\lambda_k(\Sigma^n,g,\mu)$. Therefore, the natural normalized quantity is 
\begin{equation}
\label{barlambdakmu:eq}
\bar\lambda_k(\Sigma^n,g,\mu) = \lambda_k(\Sigma^n,g,\mu)\mu(\Sigma^n)\vol(\Sigma^n,g)^\frac{2-n}{n}.
\end{equation}
If $\mu$ equals the boundary measure of $\Sigma^n$, then $\lambda_k(\Sigma^n,g,\mu)$ coincides with the Steklov eigenvalues and one recovers normalization~\eqref{barsigmak:eq} from~\eqref{barlambdakmu:eq}.

Furthermore, we show that for any other normalization by powers of $\vol(\partial \Sigma^n,g)$ and $\vol(\Sigma^n,g)$ the normalized eigenvalue functional does not have any smooth critical points. This fact explains some of the flexibility results mentioned above. All the observations made in this section point us towards the fact that~\eqref{barsigmak:eq} is the natural normalization for the Steklov problem, see also~\cite{GKL} for some additional arguments. One of the main goals of the present paper is to convince the reader that it is indeed the case.

\subsection{Steklov problem with a density.} 
The second issue we encounter is that even with the correct normalization the $\bar\sigma_k$-conformally extremal metrics do not quite correspond to free boundary $n$-harmonic maps. In order to remedy the situation we consider the Steklov problem with a density.

Let $0<\rho\in C^\infty(\partial \Sigma^n)$ be a positive smooth function. The eigenvalues $\sigma_k(\Sigma^n,g,\rho)$ are defined to be the numbers $\sigma$ such that the following equation 

\begin{equation*}
  \begin{cases}
    \Delta_g u = 0 & \text{in } \Sigma^n, \\
    \partial_\nu u = \sigma \rho u & \text{on } \partial \Sigma^n
  \end{cases}
\end{equation*}
has a non-trivial solution. The eigenvalues $\sigma_k(\Sigma^n,g,\rho)$ are a direct generalization of $\sigma_k(\Sigma^n,g,\rho)$ with the same properties. Furthermore, $\sigma_k(\Sigma^n,g,\rho) = \lambda_k(\Sigma^n,g,\rho \,dA_g)$, so the natural normalization is
\begin{equation}
\label{barsigmakrho:eq}
\bar\sigma_k(\Sigma^n,g,\rho) = \sigma_k(\Sigma^n,g,\rho)\left(\int_{\partial\Sigma^n}\rho\,dA_g\right)\vol(\Sigma^n,g)^\frac{2-n}{n}.
\end{equation}

We prove that critical points of $\bar\sigma_k(\Sigma^n,g,\rho)$ in the product space $\mC\times C^\infty(\Sigma^n)$ correspond to free boundary $n$-harmonic maps to the unit ball, see Theorem~\ref{Steklovconf:thm} below for the exact statement. For $n=2$ the conformal invariance of the Laplacian implies that for any $0\leqslant f\in C^\infty(\Sigma^2)$ one has $\sigma_k(\Sigma^2,g,f|_{\partial\Sigma^2}) = \sigma_k(\Sigma^2,f^2g)$. In this case we recover the correspondence with free boundary harmonic maps to the unit ball for Steklov eigenvalues without density established in~\cite{FS3}, see Section~\ref{Steklovsurfaces:sec} for a more detailed discussion. However, the lack of the conformal invariance in higher dimensions makes it necessary to consider the problem with density. 

\subsection{Free boundary harmonic annuli in $\mathbb{B}^3$} In the final section we construct and study a family of rotationally symmetric free boundary harmonic maps from an annulus $\mathbb{A}_T=[0,T]\times \mathbb{S}^1$ to a three-dimensional ball $\mathbb{B}^3$. Our approach is very similar to Fraser-Schoen's analysis of rotationally symmetric metrics on annuli in~\cite{FS1}. We show that for any $T\geqslant T_1$ there exists a rotationally symmetric free boundary harmonic map $\Psi_T\colon\mathbb{A}_T\to\mathbb{B}^3$ corresponding to a $\bar\sigma_1$-conformally extremal metric. The image of $\Psi_T$ is a piece of a stretched catenoid, see Figure~\ref{fig:examples}. In Section~\ref{fbha:sec} we conjecture that these metrics are in fact $\bar\sigma_1$-maximizers in their conformal class and provide some evidence for this conjecture. Finally, we show that there are values of $T$ such that the density $\rho$ corresponding to $\Psi_T$ is not identically constant. As a result, there exist critical pairs $(g,\rho)$ with non-constant $\rho$, i.e. it is necessary to introduce the density function in order to have a correspondence with free boundary harmonic maps.

\subsection*{Plan of the paper} The paper is organized in the following way. In Section~\ref{results:sec} we provide the statements of the main results and recall the necessary background on $n$-harmonic maps and minimal immersions. In Section~\ref{alg_extremal:sec} the definition of the metric extremal for the eigenvalue functional is introduced. We obtain the necessary and sufficient conditions of extremality in terms of algebraic relations on the corresponding eigenfunctions. These relations are then used in Section~\ref{geometricextremal:sec} to establish the correspondence between extremal metrics, $n$-harmonic maps and minimal immersions. Finally, Section~\ref{Annuli:sec} contains the construction of the free boundary harmonic annuli with non-trivial boundary densities.

\subsection*{Acknowledgements} The authors would like to thank Daniel Stern for fruitful discussions and Iosif Polterovich for invaluable remarks on the preliminary versions of the manuscript.  
Antoine Métras's work is part of a PhD thesis under the supervision of Iosif Polterovich and was supported by the Fonds de recherche du Québec – Nature et technologies (FRQNT). Mikhail Karpukhin is partially supported by US National Science Foundation grant DMS-1363432.

\newpage


\begin{table}[h!]
	\centering
	
	{\renewcommand{\arraystretch}{2}
	
	\begin{tabular}{  m{0.3\textwidth} | m{0.35\textwidth}| m{0.35\textwidth}  }
	 		       & Laplacian $\bar\lambda_k(M,g)$  & Steklov $\bar\sigma_k(\Sigma,g)$ \\
			       \hline
	Normalization  & $ \lambda_k(M,g)\area(M,g)$ & $\sigma_k(\Sigma,g)\length(\partial \Sigma,g)$ \\
	\hline
	Extremal metrics in the conformal class $\mC$ correspond to & Harmonic maps \newline $\Phi\colon (M,\mC)\to\mathbb{S}^{m-1}$ & Free boundary harmonic maps $\hat\Psi\colon(\Sigma,\mC)\to\mathbb{B}^{m}$\\
	\hline
	which are extremal points of the conformally invariant functional & $ E(\Phi) = \frac{1}{2}\displaystyle\int |d\Phi|_g^2\,dv_g $ & $E(\hat\Psi) = \frac{1}{2}\int|d\hat\Psi|_g^2\,dv_g$ under the constraint that\newline $\Psi = \hat\Psi|_{\partial \Sigma}\colon\partial \Sigma\to\mathbb{S}^{m-1}$ \\
	\hline
	and, as a result, the solutions of & $\Delta_g\Phi = |d\Phi|_g^2\Phi$ & $\mD_g\Psi = |\mD_g\Psi|\Psi$ or\newline $\partial_\nu\hat\Psi = |\partial_\nu\hat\Psi|\hat\Psi$  \\
	\hline
	\multicolumn{3}{l}{	}

	\varwidth{\linewidth}
	Conversely, any corresponding geometric object gives rise 
	 to the extremal metric
	 \endvarwidth\vspace{1mm}
	 \\
	\hline
	One defines the associated conformally covariant operator & $L_{g,\Phi} = \Delta_g-|d\Phi|_g^2$ & $L^{\mD}_{g,\Psi} = \mD_g - |\mD_g\Psi|$ \\
	\hline
	and the spectral index & $\ind_S(\Phi) = \ind(L_{g,\Phi})$ & $\ind_S(\Psi) = \ind(L^\mD_{g,\Psi})$ \\
	\hline
	The corresponding metric is given by & $g_\Phi = |d\Phi|_g^2\,g$ & any $h$ with \newline $h|_{\partial \Sigma} = |\mD_g\Psi| \,g|_{\partial \Sigma}$ \\
	\hline
	which is extremal for the functional & $\bar\lambda_{\ind_S(\Phi)}(M,g)$ & $\bar\sigma_{\ind_S(\Psi)}(\Sigma,g)$ \\
	\hline
	with the value & $\bar\lambda_{\ind_S(\Phi)}(M,g_\Phi) = 2E(\Phi)$ & $\bar\sigma_{\ind_S(\Psi)}(\Sigma,h) = 2E(\hat\Psi)$\\
	\hline
	Extremal metrics in the space of all metrics correspond to the same geometric object with an additional property of being conformal, i.e. to & Minimal immersions to $\mathbb{S}^{m-1}$ & Free boundary minimal immersions to $\mathbb{B}^{m}$\\
	\hline
	\end{tabular}
	}
	
	\caption{Classical theory of extremal metrics on surfaces.}
	\label{surfaces:table}
	\end{table}

\newpage

\begin{table}[H]
	\centering
	{\renewcommand{\arraystretch}{1.8}
	\begin{tabular}{  m{0.3\textwidth} | m{0.35\textwidth}| m{0.35\textwidth}  }
	 		       & Laplacian $\bar\lambda_k(M,g)$  & Steklov $\bar\sigma_k(\Sigma,g,\rho)$ \\
			       \hline
	Normalization  & $ \lambda_k(M^n,g)\vol(M^n,g)^\frac{2}{n}$ & $\sigma_k(\Sigma^n,g)\|\rho\|_{L^1}\vol(\Sigma^n,g)^{\frac{2-n}{n}}$ \\
	\hline
	Extremal metrics in the conformal class $\mC$ correspond to & $n$-Harmonic maps \newline $\Phi^n\colon (M^n,\mC)\to\mathbb{S}^{m-1}$ & Free boundary $n$-harmonic maps $\hat\Psi^n\colon(\Sigma,\mC)\to\mathbb{B}^{m}$\\
	\hline
	which are extremal points of the conformally invariant functional & $ E^n(\Phi^n) = \frac{1}{n}\displaystyle\int |d\Phi^n|_g^n\,dv_g $ & $E^n(\hat\Psi^n) = \frac{1}{n}\int |d\hat\Psi^n|_g^n\,dv_g$ under the constraint that $\Psi^n : = \hat\Psi^n|_{\partial \Sigma^n}\colon\partial \Sigma^n\to\mathbb{S}^{m-1}$ \\
	\hline
	and, as a result, the solutions of & $\delta_g(|d\Phi|_g^{n-2}d\Phi) = |d\Phi|_g^n\Phi$ & $\partial_\nu\hat\Psi^n = |\partial_\nu\hat\Psi^n|\hat\Psi^n$ or\newline $\mD^n_{g,\Psi^n}\Psi^n = |\mD^n_{g,\Psi^n}\Psi^n|\Psi^n$, where $\mD^n_{g,\Psi^n}$ is the Dirichlet-to-Neumann operator associated with $P(u) = \delta_g(|d\hat\Psi^n|^{n-2}_g du)$  \\
	\hline
	\multicolumn{3}{l}{	}

	\varwidth{\linewidth}
	Conversely, any corresponding geometric object with nowhere vanishing differential gives rise 
	 to the extremal metric
	 \endvarwidth\vspace{1mm}
	 \\
	\hline
	One defines the associated conformally covariant operator & $L_{g,\Phi^n}(u) =$ \newline$ \delta_g(|d\Phi^n|_g^{n-2}d u)-|d\Phi^n|_g^n u$ & $L^{\mD}_{g,\Psi^n}(u) =$\newline$ \mD^n_{g,\Psi^n}u - |\mD^n_{g,\Psi^n}\Psi^n|u$ \\
	\hline
	and the spectral index & $\ind_S(\Phi^n) = \ind(L_{g,\Phi^n})$ & $\ind_S(\Psi^n) = \ind(L^\mD_{g,\Psi^n})$ \\
	\hline
	The corresponding metric (density) is given by & $g_{\Phi^n} = |d\Phi^n|_g^2g$ & $g_{\Psi^n} = |d\hat\Psi^n|_g^2g$ \newline $\rho_{\Psi^n} =  |\partial_{\nu_g}\hat\Psi|/|d\hat\Psi^n|_g$ \\
	\hline
	which is extremal for the functional & $\bar\lambda_{\ind_S(\Phi^n)}(M^n,g)$ & $\bar\sigma_{\ind_S(\Psi^n)}(\Sigma^n,g,\rho)$ \\
	\hline
	with the value & $\bar\lambda_{\ind_S(\Phi^n)}(M^n,g_{\Phi^n})$ $=nE^n(\Phi^n)$ & $\bar\sigma_{\ind_S(\Psi^n)}(\Sigma^n,g_{\Psi^n},\rho_{\Psi^n})$\newline $= nE^n(\hat\Psi^n)$\\
	\hline
	Extremal metrics in the space of all metrics correspond to the same geometric object with an additional property of being conformal, i.e. to & Minimal immersions to $\mathbb{S}^{m-1}$ & Free boundary minimal immersions to $\mathbb{B}^{m}$. \newline Additionally, extremal densities are always constant.\\
	\hline
	\end{tabular}
	}
	\caption{The theory of extremal metrics in dimension $n\geqslant 3$. 
	}
	\label{highd:table}
	\end{table}

\pagebreak

\section{Main results} 
\label{results:sec}
In this section we provide the detailed account of the main results outlined in the introduction.

\subsection{Extremal metrics in the conformal class: Laplacian} We provide a geometric characterization of extremal metrics in a fixed conformal class. This is well-known for surfaces~\cite{NadirashviliTorus, ESI2, FS3}. For Laplacian eigenvalues in higher dimensions it has previously appeared in a slightly different form in~\cite{ESI2, M}, for Steklov eigenvalues our results are new. The discussion below is summarized in Tables~\ref{surfaces:table},~\ref{highd:table}.

We first recall some background information on $n$-harmonic maps.
\begin{definition}
Let $(N,g)$, $(Q,h)$ be Riemannian manifolds. Then the map $\Phi\colon (N,g)\to(Q,h)$, $\Phi\in W^{1,p}(N,Q)$ is called $p$-harmonic if it is a critical point of the $p$-energy functional
$$
E^p_g(\Phi) = \frac{1}{p}\int_N |d\Phi|^p_{g,h}\,dv_g. 
$$
If $p=2$, then $p$-harmonic maps are referred to as simply harmonic.
\end{definition}
In the present paper we only consider a case $n=\dim N = p$. In this case, the energy functional $E^n_g$ is conformally invariant, i.e. for any $\Phi\in W^{1,n}(N,Q)$ one has $E^n_g(\Phi) = E^n_{e^{2\omega}g}(\Phi)$. Thus, the property of being $n$-harmonic only depends on the conformal class $[g]$ of the metric $g$. 

Assume $N=M^n$ is an $n$-dimensional closed manifold and $(Q,h)$ is a unit sphere $\mathbb{S}^{m-1}\subset\mathbb{R}^{m}$ with the standard metric. The regularity theory for such $n$-harmonic maps asserts that they are always $C^{1,\alpha}$ for some $\alpha>0$~\cite[Corollary 12]{Takeuchi} and $C^\infty$ if $n=2$ or if $d\Phi\ne 0$.
A straightforward computation shows that $\Phi\colon (M^n,g)\to \mathbb{S}^{m-1}$ is $n$-harmonic iff it is a weak solution of
\begin{equation}
\label{n-harmonic:eq}
\delta_g(|d\Phi|_g^{n-2}d\Phi) = |d\Phi|^n_g\Phi,
\end{equation}    
where $\delta_g$ is the dual of $d$. 

Assume further that $\Phi$ is non-degenerate, i.e. $d\Phi$ does not vanish on $M^n$. To each such $\Phi$ one associates a Schr\"odinger operator
$$
L_{g,\Phi}(u) = \delta_g(|d\Phi|_g^{n-2}du) - |d\Phi|^n_gu,
$$
which can be seen to be conformally covariant, $L_{e^{2\omega}g,\Phi} = e^{-n\omega}L_{g,\Phi}$. In particular, the index of $L_{g,\Phi}$, i.e. the number of negative eigenvalues, is independent of the choice of $g$ in the conformal class.

\begin{definition}
Let $\Phi\colon (M^n,\mC)\to\mathbb{S}^{m-1}$ be a non-degenerate $n$-harmonic map. The {\em spectral index} $\ind_S(\Phi)$ is defined to be the index of the operator $L_{g,\Phi}$ for some (any) metric $g\in\mC$.
\end{definition}

Note that by the equation of the $n$-harmonic map one has  $L_{g,\Phi}(\Phi)=0$, i.e. the components of $\Phi$ are in the kernel of $L_{g,\Phi}$. Since $\Phi$ is non-degenerate, we can define a smooth metric $g_\Phi$ by the formula $g_{\Phi} = \frac{1}{n}|d\Phi|_g^2g$. Then one has 
$$
L_{\Phi,g_{\Phi}} = n^{\frac{n-2}{2}}(\Delta_{g_{\Phi}} - n),
$$
i.e. the components of $\Phi$ are eigenfunctions of $\Delta_{g_{\Phi}}$ with the eigenvalue $\lambda_k=n$. Furthermore, $\ind_S(\Phi)$ is the smallest $k$ such that $\lambda_k(M^n,g_\Phi)=n$ is satisfied.

We are now in position to state a geometric characterization of extremal metrics for Laplacian eigenvalues. We set 
$$
\bar\lambda_k(M^n,g) = \lambda_k(M^n,g)\vol(M^n,g)^{\frac{2}{n}}.
$$
Traditionally a metric $g$ on $M^n$ is called {\em $\bar\lambda_k$-conformally extremal} if it is a {\em critical point} of the functional $g\mapsto\bar\lambda_k(M^n,g)$ in the conformal class $[g]$, see Definition~\ref{extremal:def} for the precise formulation.

\begin{theorem}
\label{Laplacianconf:thm}
Let $M^n$ be an $n$-dimensional closed manifold and $\mC$ be a conformal class on $M^n$. Suppose that the smooth metric $g\in\mC$ is $\bar\lambda_k$-conformally extremal. Then there exists a non-degenerate $n$-harmonic map $\Phi\colon (M,\mC)\to\mathbb{S}^{m-1}$ such that $g=\alpha g_\Phi$ for some $\alpha>0$ and $\bar\lambda_k(M^n,g_\Phi) = n$. In particular, $\ind_S(\Phi)\leqslant k$.

Conversely, let $\Phi\colon (M,\mC)\to\mathbb{S}^{m-1}$ be a non-degenerate $n$-harmonic map. Then the metric $g_\Phi$ is $\bar\lambda_{\ind_S(\Phi)}$-conformally extremal.
\end{theorem}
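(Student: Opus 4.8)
The plan is to characterize $\bar\lambda_k$-conformal extremality by the first variation of the eigenvalue functional along conformal deformations, and then to recognize the resulting algebraic condition on eigenfunctions as exactly the $n$-harmonic map equation \eqref{n-harmonic:eq} after a normalization. The two directions of the statement are really two readings of the same identity, so I would organize the argument around that identity.

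\emph{From extremal metric to $n$-harmonic map.} Fix a $\bar\lambda_k$-conformally extremal metric $g\in\mC$. Without loss of generality normalize $\vol(M^n,g)=1$, so that conformal variations $g_t=e^{2t\varphi}g$ (with $\int_M \varphi\, dv_g$ adjusted, or better, working with the scaling-invariant functional directly) have derivative governed by the classical Hadamard-type formula: the eigenvalue $\lambda=\lambda_k(M^n,g)$ has one-sided derivatives equal to $\max$/$\min$ over the eigenspace $E_\lambda$ of a quadratic form in the deformation $\varphi$. Writing $\lambda_k(M^n,g_t)$, the relevant first-variation quantity for the Steklov-type normalized functional here is, for each eigenfunction $u$ with $\Delta_g u=\lambda u$ and $\int_M u^2 = 1$, an expression of the form $\int_M \big(|du|_g^2 - \tfrac{\lambda}{?}\,u^2 - \text{(trace terms)}\big)\varphi\, dv_g$; the precise constant comes from differentiating $\vol(M^n,g_t)^{2/n}$ and from how $\Delta_{g_t}$ transforms conformally in dimension $n$, namely $\Delta_{e^{2\varphi}g} = e^{-2\varphi}(\Delta_g + (n-2)\langle d\varphi, d\,\cdot\,\rangle_g)$. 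Extremality (critical point in $[g]$) forces that for a suitable convex combination — equivalently, by a Hahn--Banach / convexity argument as in Nadirashvili, El Soufi--Ilias — there exist eigenfunctions $u_1,\dots,u_m\in E_{\lambda_k}$ such that $\sum_i |du_i|_g^2 \equiv c$ is constant and $\sum_i u_i^2 \equiv c'$ is constant on $M^n$, with the two constants linked by the normalization. The map $\Phi = (u_1,\dots,u_m)/\sqrt{c'}$ then lands in $\mathbb{S}^{m-1}$, and the eigenvalue equation $\Delta_g u_i = \lambda u_i$ together with $\sum |du_i|_g^2 = c$ rearranges into precisely $\delta_g(|d\Phi|_g^{n-2}d\Phi)=|d\Phi|_g^n\Phi$ once one passes to the metric $g_\Phi = \tfrac1n|d\Phi|_g^2 g$; indeed by the computation already recorded in the excerpt, in the metric $g_\Phi$ the components of $\Phi$ are $\Delta_{g_\Phi}$-eigenfunctions with eigenvalue exactly $n$, which is the scale-invariant normalization $\bar\lambda(M^n,g_\Phi)=n$. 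Non-degeneracy of $\Phi$ ($d\Phi\neq 0$) is where the constancy $\sum|du_i|_g^2 = c>0$ is used, and it also shows $g_\Phi$ is a genuine smooth metric and $g=\alpha g_\Phi$. That $\ind_S(\Phi)\leqslant k$ is then immediate: the components of $\Phi$ lie in the kernel of $L_{g,\Phi}=n^{(n-2)/2}(\Delta_{g_\Phi}-n)$, so $\lambda_k(M^n,g_\Phi)=n$ forces the index, i.e. the count of negative eigenvalues of $L_{g,\Phi}$, to be at most $k$.

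\emph{From $n$-harmonic map to extremal metric.} Conversely, given a non-degenerate $n$-harmonic $\Phi\colon(M,\mC)\to\mathbb{S}^{m-1}$, set $g_\Phi=\tfrac1n|d\Phi|_g^2 g$. The relation $L_{\Phi,g_\Phi}=n^{(n-2)/2}(\Delta_{g_\Phi}-n)$ shows the components of $\Phi$ are $\Delta_{g_\Phi}$-eigenfunctions with eigenvalue $n$; let $\ind_S(\Phi)=k_0$ be, as defined, the smallest index at which $n$ occurs in the spectrum of $\Delta_{g_\Phi}$. Then $\bar\lambda_{k_0}(M^n,g_\Phi)=n\cdot\vol(M^n,g_\Phi)^{2/n}$, and I must check $g_\Phi$ is a \emph{critical point} of $\bar\lambda_{k_0}$ restricted to $\mC$. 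For this I run the first-variation computation of the first part in reverse: for any conformal deformation direction $\varphi$, the one-sided derivatives of $\bar\lambda_{k_0}(M^n,e^{2t\varphi}g_\Phi)$ at $t=0$ are extrema over the eigenspace $E_n$ of the linear functional $\varphi\mapsto \int_M (|du|^2_{g_\Phi} - n\,u^2/\text{(const)} - \dots)\varphi$; using $\sum_i |du_i|^2_{g_\Phi}$ and $\sum_i u_i^2$ both constant — which is exactly the content of the $n$-harmonic equation plus $|\Phi|=1$ — the relevant functional vanishes identically in $\varphi$ for the particular combination coming from $\Phi$, hence $0$ lies between the min and max of the derivative quadratic form, which is the definition of a critical point (Definition~\ref{extremal:def}).

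\emph{Main obstacle.} The delicate point is the convexity/duality step in the first direction: extremality only gives that the zero functional is a convex combination of the sign-indefinite first-variation functionals over the eigenspace, and one must upgrade this to the existence of an \emph{actual orthonormal-ish family} $u_1,\dots,u_m$ with $\sum u_i^2$ and $\sum |du_i|^2$ pointwise constant. This is the standard Nadirashvili / El Soufi--Ilias maximum-principle-plus-Hahn--Banach argument, but in dimension $n\geqslant 3$ one has to be careful that the correct conformal weight $|d\Phi|_g^{n-2}$ (rather than the dimension-2 weight $1$) appears, so that the limiting PDE is the $n$-harmonic equation and not the ordinary harmonic one — this is precisely the discrepancy with \cite{ESI2} that the $n$-harmonic formalism resolves. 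A secondary technical nuisance is regularity: $\Phi\in C^{1,\alpha}$ a priori, but non-degeneracy bootstraps it to $C^\infty$ (cited in the excerpt), which is needed for $g_\Phi$ to be a smooth metric and for the spectral-index discussion to make sense.
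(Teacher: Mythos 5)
Your proposal is correct and follows essentially the same route as the paper: first establish, via a Hadamard-type first-variation formula plus a Hahn--Banach/convexity step, the algebraic extremality conditions $\sum u_i^2 \equiv \text{const}$ and $\sum|du_i|_g^2 \equiv \text{const}$ on a family of $\lambda_k$-eigenfunctions (this is the paper's Theorem~\ref{Laplacianconfalg:thm}), then assemble $\Phi$ and verify the $n$-harmonic equation. One clarification, though, on your ``main obstacle'' paragraph: the conformal weight $|d\Phi|_g^{n-2}$ does \emph{not} appear anywhere in the variational step. The Rayleigh quotient and its first variation involve only the $2$-energy $|du|_g^2$ throughout, and the $u_i$ are ordinary $\Delta_g$-eigenfunctions; what changes in dimension $n\geqslant 3$ are merely the scalar coefficients $\frac{n-2}{2}$ and $\frac{n}{2}$ in the derivative formula, coming from differentiating $\vol(M,g_t)^{2/n}$ and $dv_{g_t}$. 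The $n$-harmonic equation $\delta_g(|d\Phi|^{n-2}_g d\Phi)=|d\Phi|^n_g\Phi$ is then not a ``limiting PDE'' one must coax out of the variational argument with a different weight; it is a one-line \emph{consequence} of the constancy conditions: since $|d\Phi|_g^2=\lambda_k$ is a constant, $\delta_g(|d\Phi|^{n-2}_gd\Phi)=\lambda_k^{(n-2)/2}\Delta_g\Phi=\lambda_k^{n/2}\Phi=|d\Phi|^n_g\Phi$. This is exactly how the paper concludes. So your argument goes through, but the reader should understand the $n$-harmonicity as automatic once $|d\Phi|_g$ is constant, rather than as something requiring a modified conformal-weight analysis.
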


The characterization of $\bar\lambda_k$-conformally extremal metrics (in a different form) appeared in~\cite{ESI2}. The connection to $n$-harmonic maps was first observed in~\cite{M}.

\subsection{Extremal metrics in the conformal class: Steklov} Let $\Sigma^n$ be a manifold with non-empty boundary. An $n$-harmonic map $\hat\Psi\colon\Sigma^n\to\mathbb{B}^{m}$ is called {\em free boundary $n$-harmonic map} if it is proper (i.e. $\hat\Psi(x)\in \mathbb{S}^{m-1}$ iff $x\in \partial \Sigma^n$) and $\hat\Psi(\Sigma^n)\perp\mathbb{S}^{m-1}$. It is easy to see that such $\hat\Psi$ is a weak solution of the equation
\begin{equation*}
  \begin{cases}
    \delta_g(|d\hat\Psi|_g^{n-2}d\hat\Psi) = 0 & \text{in } \Sigma^n, \\
    \partial_{\nu_g} \hat\Psi = |\partial_{\nu_g}\hat\Psi| \hat\Psi & \text{on } \partial \Sigma^n.
  \end{cases}
\end{equation*}
We reserve the notation $\Psi$ for the restriction of $\hat\Psi$ to the boundary, $\Psi\colon \partial\Sigma^n\to\mathbb{S}^{m-1}$. The map $\Psi$ completely determines $\hat\Psi$.

Assume that $\hat\Psi$ is a non-degenerate. To each such $\hat\Psi$ one associates a Dirichlet-to-Neumann operator $L_{g,\hat\Psi}^\mD\colon C^\infty(\partial \Sigma^n)\to C^\infty(\partial \Sigma^n)$ as follows. For each $u\in C^\infty(\partial\Sigma^n)$ one first extends it to $\hat u\in C^\infty(\Sigma^n)$ as
\begin{equation}
\label{psiextension:eq}
  \begin{cases}
    \delta_g(|d\hat\Psi|_g^{n-2}d\hat u) = 0 & \text{in } \Sigma^n, \\
    \hat u = u & \text{on } \partial \Sigma^n.
  \end{cases}
\end{equation}
If $\mD_{g,\Psi}$ denotes the corresponding Dirichlet-to-Neumann operator $u\mapsto \partial_{\nu_g} \hat u$, then one defines
$$
L_{g,\hat\Psi}^\mD(u) = \mD_{g,\Psi}u - |\mD_{g,\Psi}\Psi|u = \partial_{\nu_g}\hat u - |\partial_{\nu_g}\hat\Psi|u.
$$
The operator $L_{g,\hat\Psi}^\mD$ is conformally covariant, $L_{e^{2\omega}g,\hat\Psi}^\mD = e^{-\omega}L_{g,\hat\Psi}^\mD$, therefore one can define the spectral index of $\hat\Psi$ as the index of $L_{g,\hat\Psi}^\mD$.

\begin{definition}
Let $\hat\Psi\colon (\Sigma^n,\mC)\to\mathbb{B}^{m+1}$ be a non-degenerate free boundary $n$-harmonic map. The {\em spectral index} $\ind_S(\hat\Psi)$ is defined to be the index of the operator $L^\mD_{g,\hat\Psi}$ for some (any) metric $g\in \mC$.
\end{definition}

By the equation of the free boundary $n$-harmonic map one has $L_{g,\hat\Psi}^\mD(\Psi)=0$. Since $\hat\Psi$ is non-degenerate we can define a smooth metric $g_{\hat\Psi} = \frac{1}{n}|d\hat\Psi|^2_gg$ on $\Sigma^n$. The extension $\hat u$ of equation~\eqref{psiextension:eq} is the harmonic extension in metric $g_{\hat\Psi}$. Furthermore, setting $\rho_{\hat\Psi} = |\partial_{\nu_{g_{\hat\Psi}}} \hat\Psi| = |\mD_{g_{\hat\Psi},\Psi}\hat\Psi| = \frac{\sqrt{n}|\mD_{g,\Psi}\hat\Psi|}{|d\hat\Psi|_g}\in C^\infty(\partial\Sigma^n)$, we show in Lemma~\ref{density:lemma} that $\rho_{\hat\Psi}>0$. Thus,
$$
L_{g_{\hat\Psi},\hat\Psi} = \mD_{g_{\hat\Psi}} - \rho_{\hat\Psi}
$$
is the classical Dirichlet-to-Neumann map with density. In particular, the components of $\hat\Psi$ are $\sigma_k(\Sigma^n,g_{\hat\Psi},\rho_{\hat\Psi})$ eigenfunctions with eigenvalue $\sigma_k=1$. The smallest $k$ such that 
$\sigma_k(\Sigma^n,g_{\hat\Psi},\rho_{\hat\Psi}) =1$ is the spectral index $\ind_S(\hat\Psi)$.

We can now formulate the geometric characterization of extremal metrics. Recall the normalization
$$
\bar\sigma_k(\Sigma^n,g,\rho) = \sigma_k(M,g,\rho)\vol(\Sigma^n,g)^\frac{2-n}{n}||\rho||_{L^1(\partial\Sigma^n,g)}.
$$ 
The pair $(g,\rho)$ is called $\bar\sigma$-conformally extremal if it is a critical point of the functional $\bar\sigma_k$ in $[g]\times C_{>0}^\infty(\partial\Sigma^n)$.

\begin{theorem}
\label{Steklovconf:thm}
Let $\Sigma^n$ be an $n$-dimensional compact manifold with boundary of dimension $n\geqslant 3$ and $\mC$ be a conformal class on $\Sigma^n$. Suppose that the pair $(g,\rho)\in\mC\times C_{>0}^\infty(\partial\Sigma^n)$ is $\bar\sigma_k$-conformally extremal. Then there exists a non-degenerate free boundary $n$-harmonic map $\hat\Psi\colon (\Sigma^n,\mC)\to\mathbb{B}^{m}$ such that $(g,\rho)=(\alpha g_{\hat\Psi},\alpha^{-\frac{1}{2}}\rho_{\hat\Psi})$ for some $\alpha>0$ and $\bar\sigma_k(\Sigma^n,g_{\hat\Psi},\rho_{\hat\Psi}) = 1$. In particular, $\ind_S(\hat\Psi)\leqslant k$.

Conversely, let $\hat\Psi\colon (\Sigma^n,\mC)\to\mathbb{B}^{m}$ be a non-degenerate free boundary $n$-harmonic map. Then the pair $(g_{\hat\Psi},\rho_{\hat\Psi})$ is $\bar\sigma_{\ind_S(\hat\Psi)}$-conformally extremal.
\end{theorem}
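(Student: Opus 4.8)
The plan is to follow the now-standard strategy for theorems of this type (Nadirashvili, Fraser--Schoen, El Soufi--Ilias): translate the variational "critical point" condition into an algebraic identity on a well-chosen collection of eigenfunctions, and then interpret that identity geometrically as the free boundary $n$-harmonic map equation. Concretely, I would first establish — presumably via the extremality criterion proved in Section~\ref{alg_extremal:sec} — that if $(g,\rho)$ is $\bar\sigma_k$-conformally extremal, then writing $\sigma = \sigma_k(\Sigma^n,g,\rho)$ and letting $u_1,\dots,u_m$ be an $L^2(\rho\,dA_g)$-orthonormal basis of the $\sigma$-eigenspace, there is a convex combination (after rescaling the $u_i$ by a positive-semidefinite matrix) satisfying the pointwise constraints $\sum_i u_i^2 = 1$ on $\partial\Sigma^n$, together with a second constraint coming from differentiating $\bar\sigma_k$ in the conformal direction inside $\Sigma^n$, namely $\sum_i |du_i|_g^{n}\,$ proportional to a constant times a power of the volume density (the interior Euler--Lagrange equation) and, from the $\rho$-direction, $\sum_i(\partial_{\nu_g}u_i)u_i = \sigma\rho\sum_i u_i^2$, i.e. the normal-derivative relation. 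The appearance of the exponent $n$ rather than $2$ in the interior condition is exactly the point where normalization~\eqref{barsigmak:eq} with the $\vol(\Sigma^n,g)^{\frac{2-n}{n}}$ factor is forced, so the computation of the first variation must be done carefully keeping both the boundary-measure and interior-volume terms.

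Having the algebraic relations, I would set $\hat\Psi = (u_1,\dots,u_m)\colon\Sigma^n\to\mathbb{R}^m$ and verify that $\hat\Psi$ is a free boundary $n$-harmonic map into $\mathbb{B}^m$ for the metric $g$ in the conformal class. The boundary constraint $\sum u_i^2 = 1$ gives $|\hat\Psi|=1$ on $\partial\Sigma^n$ and properness; orthogonality $\hat\Psi(\Sigma^n)\perp\mathbb{S}^{m-1}$ follows because the normal-derivative relation says $\partial_{\nu_g}\hat\Psi$ is a positive multiple of $\hat\Psi$ on $\partial\Sigma^n$; and the interior condition $\delta_g(|d\hat\Psi|_g^{n-2}d\hat\Psi)=0$ should drop out of combining the harmonicity $\Delta_g u_i = 0$ with the conformal-rescaling identity, using that $\delta_{e^{2\omega}g}$ and $|d\cdot|_{e^{2\omega}g}$ transform so that $\delta_g(|d\hat\Psi|_g^{n-2}d\hat\Psi)$ is conformally the right weight — this is where I would invoke the conformal invariance of $E^n$ to pass from $g$ to $g_{\hat\Psi}=\frac1n|d\hat\Psi|_g^2 g$. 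Then Lemma~\ref{density:lemma} guarantees $\rho_{\hat\Psi}>0$ so $\hat\Psi$ is non-degenerate and the formulas $g = \alpha g_{\hat\Psi}$, $\rho = \alpha^{-1/2}\rho_{\hat\Psi}$ come from matching the eigenvalue normalizations (the scaling $\sigma_k(\Sigma^n,tg,s\rho\,dA) = s^{-1}t^{(n-2)/2}\sigma_k$ pins down the exponent $-\tfrac12$ on $\alpha$), and $\bar\sigma_k(\Sigma^n,g_{\hat\Psi},\rho_{\hat\Psi}) = 1$ by direct substitution; $\ind_S(\hat\Psi)\le k$ because $\Psi$ lies in the $\sigma_k=1$ eigenspace, so $1$ is achieved at or before index $k$ and the components of $\Psi$ are in $\ker L^{\mD}_{g,\hat\Psi}$.

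For the converse, given a non-degenerate free boundary $n$-harmonic $\hat\Psi$, I would take $(g_{\hat\Psi},\rho_{\hat\Psi})$ and run the above identities backwards: the components of $\hat\Psi$ are $\sigma=1$ eigenfunctions for $(\Sigma^n,g_{\hat\Psi},\rho_{\hat\Psi})$ with $\sum u_i^2 = 1$ on the boundary and the correct interior/normal relations, which is precisely the sufficient condition from Section~\ref{alg_extremal:sec} for $(g_{\hat\Psi},\rho_{\hat\Psi})$ to be $\bar\sigma_j$-conformally extremal where $j = \ind_S(\hat\Psi)$ is the first index at which the eigenvalue $1$ occurs. I would use conformal covariance of $L^{\mD}_{g,\hat\Psi}$ to make the index statement independent of the representative. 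The main obstacle I anticipate is the first-variation computation in the Steklov-with-density setting: one must differentiate $\bar\sigma_k$ simultaneously in the conformal factor $\omega$ on $\Sigma^n$ and in $\rho$ on $\partial\Sigma^n$, handle the non-smoothness of $\sigma_k$ at eigenvalue crossings via the usual left/right derivative and convex-combination argument, and correctly bookkeep the interior volume term $\vol(\Sigma^n,g)^{(2-n)/n}$ whose variation produces the nontrivial exponent $n$ in $|d\hat\Psi|^n$ — getting these constants and the split between the two constraints exactly right is the delicate part; by contrast, once the algebraic relations are in hand, the geometric identification of $\hat\Psi$ is essentially bookkeeping with the conformal weights.
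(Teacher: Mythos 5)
There is a genuine error in the central mechanism. You claim the interior Euler--Lagrange condition produced by the extremality criterion is ``$\sum_i |du_i|_g^{n}$ proportional to a constant,'' and that ``the appearance of the exponent $n$ rather than $2$'' is forced by the normalization $\vol(\Sigma^n,g)^{(2-n)/n}$. This is not what happens, and it would derail the subsequent geometric identification. The variational characterization of $\sigma_k$ uses the ordinary Dirichlet energy $\int|du|^2\,dv_g$; when you differentiate $\bar\sigma_k$ along a conformal family $g(t)=e^{f(t)}g$ and variations of $\rho$, the interior term that survives the Hahn--Banach argument (see Theorem~\ref{Steklovrho:thm}) is $\sum_i |du_i|_g^2 = 1/\vol(\Sigma^n,g)$, with the quadratic exponent. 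The factor $\vol(\Sigma^n,g)^{(2-n)/n}$ shows up in the coefficients of the affine constraint between the two integral quantities, not as a change of exponent inside the pointwise identity.

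This matters because the actual reason $\hat\Psi=\sqrt{\sigma_k\|\rho\|_{L^1}}(u_1,\dots,u_m)$ is $n$-harmonic is not conformal covariance of $E^n$ or a rescaling to $g_{\hat\Psi}$, as you suggest. It is that the algebraic condition $\sum|du_i|^2_g\equiv\text{const}$ makes $|d\hat\Psi|_g^2$ a pointwise constant on $\Sigma^n$, while each Steklov eigenfunction $u_i$ is $g$-harmonic, so $\Delta_g\hat\Psi=0$. Then trivially
\begin{equation*}
\delta_g\bigl(|d\hat\Psi|_g^{n-2}\,d\hat\Psi\bigr)=|d\hat\Psi|_g^{n-2}\,\Delta_g\hat\Psi=0,
\end{equation*}
which is the $n$-harmonic equation. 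Without the constancy of $|d\hat\Psi|_g^2$ this computation fails; if you instead had the wrong condition $\sum|du_i|^n_g=\text{const}$ you would not be able to pull $|d\hat\Psi|_g^{n-2}$ out of $\delta_g$, since it would not be constant. The rest of your outline — boundary constraint gives $|\hat\Psi|=1$ on $\partial\Sigma^n$, the Steklov equation gives $\partial_{\nu_g}\hat\Psi\parallel\hat\Psi$, the maximum principle puts $\hat\Psi(\Sigma^n)\subset\mathbb{B}^m$, Lemma~\ref{density:lemma} for positivity of $\rho_{\hat\Psi}$, matching scalings to recover $(g,\rho)=(\alpha g_{\hat\Psi},\alpha^{-1/2}\rho_{\hat\Psi})$, and the converse by checking that the components of $\hat\Psi$ satisfy the algebraic conditions plus the Green's-formula identity $\vol(\Sigma^n,g_{\hat\Psi})=\|\rho_{\hat\Psi}\|_{L^1}$ — is in line with the paper's argument once the interior condition and the $n$-harmonicity mechanism are corrected.
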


\subsection{Remarks on the Steklov problem with density} The Steklov problem with density has not been previously mentioned in relation to sharp eigenvalue optimization problems. In this section we discuss how Theorem~\ref{Steklovconf:thm} fits with the existing results in the field and explain why it appears to be a natural setup for optimization of Steklov eigenvalues in the conformal class.   

\subsubsection{Upper bounds} Our first observation is that the normalized eigenvalues $\bar\sigma_k(\Sigma^n,g,\rho)$ are 
bounded independently of $(g,\rho)$, i.e. there exists a constant $C$ depending on the conformal class $[g]$ such that 
$$
\bar\sigma_k(\Sigma^n,g,\rho)\leqslant Ck^\frac{2}{n}
$$
For $\rho \equiv 1$ this is proved by Hassannezhad in~\cite[Theorem 4.1]{Has}. A slight modification of the proof yields the result for non-constant density $\rho$. It sufficient to repeat the proof of~\cite[Theorem 4.1]{Has} with $\bar\mu(A) := \int_{\partial\Sigma^n\cap A}\rho\,d\bar\mu_g$ in the notations of~\cite{Has}.

In particular, it makes sense to maximize the eigenvalues $\bar\sigma_k(\Sigma^n,g,\rho)$ in $[g]\times C^\infty_{>0}(\partial\Sigma^n)$ and investigate the existence and regularity of maximal pairs $(g,\rho)$. An analogous problem for Laplacian and Steklov eigenvalues on surfaces has been completely solved in the recent years~\cite{Pet1,Pet2,Pet3,KNPP2, NS}. In all these papers, the connection to harmonic maps is explicitly used in the proof. It seems natural that in order to have nice existence results in higher dimensions there has to be a connection of the problem to $n$-harmonic maps. This connection only manifests itself if one allows a non-trivial density $\rho$ to enter the picture. Thus, we believe that the Steklov problem with a density is a natural setup for optimization problems in a fixed conformal class.

\subsubsection{Fraser-Schoen's result for surfaces} 
\label{Steklovsurfaces:sec}
In the paper~\cite{FS3} Fraser and Schoen obtained the geometric characterizations of {\em maximal} metrics for Steklov and Laplacian eigenvalues on surfaces. However, their proofs can be adapted to the case of extremal metrics with only minor modifications. In fact, our analysis of extremal metrics in higher dimensions is heavily influenced by~\cite{FS3}. Here we compare Theorem~\ref{Steklovconf:thm} to the analogous result for surfaces~\cite[Proposition 2.8]{FS3} and explain why the densities do not appear for $n=2$.   

The normalized eigenvalues $\sigma_k(\Sigma^2,g,\rho)$ possess two properties specific to $n=2$. On one hand, since the Laplacian $\Delta_g$ is conformally covariant on surfaces, the harmonic extension is the same for all metrics in the conformal class $\mC$. Therefore, the eigenvalue $\sigma_k(\Sigma^2,g,\rho)$ depends on $g\in\mC$ only via the length of the normal vector. As a result, one has $\sigma_k(\Sigma^2,e^{2\omega}g,\rho)=\sigma_k(\Sigma^2,g,e^{-\omega}\rho)$. In particular, if $\hat\rho>0$ is any positive extension of $\rho$ to the interior, then $\sigma_k(\Sigma^2,g,\rho) = \sigma_k(\Sigma^2,\hat\rho^2g,1)$, i.e. the eigenvalues with density are a special case of classical Steklov eigenvalues.
On the other hand, the definition of the normalized eigenvalue does not include the volume of $\Sigma^2$. As a result, one has that $\bar\sigma_k(\Sigma^2,g,\rho) = \bar\sigma_k(\Sigma^2,\hat\rho^2g,1)$, i.e. the problem of optimizing normalized Steklov eigenvalues is the same whether one includes density or not.

Furthermore, the conformal invariance of the problem makes it impossible to identify extremal pairs as the ones induced by free boundary harmonic maps. Indeed, if $(g,\rho)$ is extremal, then $(e^{2\omega}g, e^{-\omega}\rho)$ is also extremal for all $\omega\in C^\infty(\Sigma^2)$. As a result, Theorem~\ref{Steklovconf:thm} takes the following form, which is a reformulation of~\cite[Proposition 2.8]{FS3} up to the conformal invariance described above.
 \begin{theorem}
\label{Steklovconfdim2:thm}
Let $\Sigma^2$ be a compact surface with boundary and $\mC$ be a conformal class on $\Sigma^2$. Suppose that the pair $(g,\rho)\in\mC\times C_{>0}^\infty(\partial\Sigma^n)$ is $\bar\sigma_k$-conformally extremal. Then there exists a non-degenerate free boundary harmonic map $\hat\Psi\colon (\Sigma^2,\mC)\to\mathbb{B}^{m}$ such that $(g,\rho)=(e^{2\omega} g_{\hat\Psi},e^{-\omega}\rho_{\hat\Psi})$ for some $\omega\in C^\infty(\Sigma^2)$ and $\bar\sigma_k(\Sigma^2,g_{\hat\Psi},\rho_{\hat\Psi}) = 1$. In particular, $\ind_S(\hat\Psi)\leqslant k$.

Conversely, let $\hat\Psi\colon (\Sigma^2,\mC)\to\mathbb{B}^{m}$ be a non-degenerate free boundary harmonic map. Then the pair $(e^{2\omega}g_{\hat\Psi},e^{-\omega}\rho_{\hat\Psi})$ is $\bar\sigma_{\ind_S(\hat\Psi)}$-conformally extremal for any $\omega\in C^\infty(\Sigma^2)$.
\end{theorem}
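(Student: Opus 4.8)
The plan is to deduce Theorem~\ref{Steklovconfdim2:thm} from the classical density-free surface result of Fraser and Schoen,~\cite[Proposition 2.8]{FS3} (adapted from maximal to extremal metrics, a routine modification the authors indicate; equivalently, one may run the arguments of Sections~\ref{alg_extremal:sec}--\ref{geometricextremal:sec} with $n=2$ and observe that the conclusion of Theorem~\ref{Steklovconf:thm} weakens to the present statement precisely because of the conformal invariance peculiar to dimension two). The first step is to make that invariance precise. Since $\dim\Sigma^2=2$, the Dirichlet energy $\int|du|_g^2\,dv_g$ is conformally invariant, so in the variational characterization~\eqref{measureev:eq} of $\sigma_k(\Sigma^2,g,\rho)=\lambda_k(\Sigma^2,g,\rho\,dA_g)$ the numerator is unchanged under $g\mapsto e^{2\omega}g$, while the denominator satisfies $\int u^2(e^{-\omega}\rho)\,dA_{e^{2\omega}g}=\int u^2\rho\,dA_g$; hence $\sigma_k(\Sigma^2,e^{2\omega}g,e^{-\omega}\rho)=\sigma_k(\Sigma^2,g,\rho)$ (here and below $e^{-\omega}$ is restricted to $\partial\Sigma^2$, where $\rho$ is defined). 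Combined with $\|e^{-\omega}\rho\|_{L^1(\partial\Sigma^2,e^{2\omega}g)}=\|\rho\|_{L^1(\partial\Sigma^2,g)}$ and the fact that the factor $\vol(\Sigma^2,g)^{\frac{2-n}{n}}$ is trivial for $n=2$, this shows that $\bar\sigma_k$ is constant along the orbits of the $C^\infty(\Sigma^2)$-action $\omega\cdot(g,\rho)=(e^{2\omega}g,e^{-\omega}\rho)$ on $\mC\times C_{>0}^\infty(\partial\Sigma^2)$.

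Next I would use this to reduce to the classical density-free problem. Every orbit meets the slice $\{\rho\equiv 1\}$: choosing a smooth positive extension $\hat\rho$ of $\rho$ to $\Sigma^2$ and taking $\omega=\log\hat\rho$ sends $(g,\rho)$ to $(\hat\rho^2g,1)$, recovering $\sigma_k(\Sigma^2,g,\rho)=\sigma_k(\Sigma^2,\hat\rho^2g,1)$. Because $\bar\sigma_k$ descends to the orbit space, the pair $(g,\rho)$ is $\bar\sigma_k$-conformally extremal in $\mC\times C_{>0}^\infty(\partial\Sigma^2)$ if and only if the metric $\hat\rho^2g$ is $\bar\sigma_k$-conformally extremal in $\mC$ in the classical sense; the choice of extension $\hat\rho$ does not matter, since two extensions differ by a conformal factor equal to $1$ on $\partial\Sigma^2$ and the classical $\bar\sigma_k$ on surfaces is invariant under such changes. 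The one point requiring care here is that $\sigma_k$ is only Lipschitz in its arguments, so ``critical point'' has to be interpreted via one-sided derivatives (Definition~\ref{extremal:def}); the honest, not merely infinitesimal, invariance of $\bar\sigma_k$ along orbits is exactly what lets criticality pass between $(g,\rho)$ and $(\hat\rho^2g,1)$.

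Finally I would invoke~\cite[Proposition 2.8]{FS3}, which identifies the classical $\bar\sigma_k$-conformally extremal metrics on $(\Sigma^2,\mC)$ with the metrics $h\in\mC$ for which some non-degenerate free boundary harmonic map $\hat\Psi\colon(\Sigma^2,\mC)\to\mathbb{B}^m$ satisfies $\partial_{\nu_h}\hat\Psi=\hat\Psi$ on $\partial\Sigma^2$, i.e.\ $|\partial_{\nu_h}\hat\Psi|\equiv 1$. Since $|\partial_{\nu_{e^{2\tau}g_{\hat\Psi}}}\hat\Psi|=e^{-\tau}\rho_{\hat\Psi}$, these are precisely the metrics $h$ for which $(h,1)$ lies in the orbit of $(g_{\hat\Psi},\rho_{\hat\Psi})$. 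Applying this with $h=\hat\rho^2g$ and transporting back along the orbit puts $(g,\rho)$ in the orbit of $(g_{\hat\Psi},\rho_{\hat\Psi})$, i.e.\ $(g,\rho)=(e^{2\omega}g_{\hat\Psi},e^{-\omega}\rho_{\hat\Psi})$ for some $\omega\in C^\infty(\Sigma^2)$; the assertions $\bar\sigma_k(\Sigma^2,g_{\hat\Psi},\rho_{\hat\Psi})=1$ and $\ind_S(\hat\Psi)\leqslant k$ then follow exactly as in the higher-dimensional Theorem~\ref{Steklovconf:thm}, the boundary components of $\hat\Psi$ being Steklov eigenfunctions for $(g_{\hat\Psi},\rho_{\hat\Psi})$ at eigenvalue $1$. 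The converse is immediate from the converse half of~\cite[Proposition 2.8]{FS3} together with the orbit-invariance of $\bar\sigma_{\ind_S(\hat\Psi)}$, which promotes extremality of $(g_{\hat\Psi},\rho_{\hat\Psi})$ to extremality of every $(e^{2\omega}g_{\hat\Psi},e^{-\omega}\rho_{\hat\Psi})$. I expect the main obstacle to be bookkeeping rather than analysis: no estimate beyond those already established for Theorem~\ref{Steklovconf:thm} is needed, but one must set up the quotient by the conformal action carefully, handle the one-sided differentiability of $\sigma_k$, and track both the conformal factor $\omega$ and the eigenvalue normalization cleanly through the passage to the orbit space and back.
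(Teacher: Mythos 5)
Your proposal is correct and follows exactly the route the paper intends: the authors do not write out a separate proof of Theorem~\ref{Steklovconfdim2:thm}, but state it as ``a reformulation of~\cite[Proposition 2.8]{FS3} up to the conformal invariance described above,'' and your argument is a careful unwinding of that remark --- verifying the orbit invariance $\bar\sigma_k(\Sigma^2,e^{2\omega}g,e^{-\omega}\rho)=\bar\sigma_k(\Sigma^2,g,\rho)$, reducing to the slice $\rho\equiv 1$ via an extension $\hat\rho$, checking independence of the extension, passing one-sided criticality through the orbit, and invoking~\cite{FS3}. The only minor point worth flagging is that the paper's formula in Section~\ref{Steklovsurfaces:sec} reads $\sigma_k(\Sigma^2,e^{2\omega}g,\rho)=\sigma_k(\Sigma^2,g,e^{-\omega}\rho)$, which appears to have a sign slip (it should be $e^{\omega}\rho$); your version of the invariance, phrased directly as the orbit action $(g,\rho)\mapsto(e^{2\omega}g,e^{-\omega}\rho)$, is the correct and directly usable form.
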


\subsubsection{Laplacian problem with density} Having seen that introducing density into a classical Steklov problem leads to a more geometrically natural optimization problem, one could ask whether the same happens for the Laplacian. Let $(M^n,g)$ be a closed Riemannian manifold and $\rho\in C^\infty(M^n)$. One defines the eigenvalues $\lambda_k(M^n,g,\rho)$ to be the the numbers such that the following equation has non-trivial solutions,
$$
\Delta_g u = \lambda\rho u.
$$    
In fact, recalling the definition of measure eigenvalues~\eqref{measureev:eq} one sees that $\lambda_k(M^n,g,\rho) = \lambda_k(M^n,g,\rho dv_g)$. In particular, by~\eqref{barlambdakmu:eq} the natural normalization is
$$
\bar\lambda_k(M^n,g,\rho) = \lambda_k(M^n,g,\rho)\vol(M^n,g)^\frac{2-n}{n}||\rho||_{L^1(M^n,g)}
$$
and one can study $\bar\lambda_k$-conformally extremal pairs $(g,\rho)$. However, it turns out that the density $\rho$ does not bring anything new to the problem. We prove in Theorem~\ref{Laplacianrho:thm} that for any conformally extremal pair $(g,\rho)$ one has that $\rho$ is a constant function. In particular $g$ is $\bar\lambda_k$-conformally maximal iff $(g,1)$ is $\bar\lambda_k$-conformally maximal. This fact gives further support to considering Steklov problem with a density in higher dimensions.  

\subsubsection{Other normalizations}  While the normalization~\eqref{barsigmak:eq} is natural from measure theory point of view, one could question whether other normalizations could lead to interesting optimization problems. Recall that up until now the Steklov eigenvalues in higher dimensions were considered with normalization either by $\vol(\Sigma^n,g)^{\frac{1}{n}}$ or $\vol(\partial\Sigma^n,g)^\frac{1}{n-1}$, see e.g~\cite{BFNT, CEG1, FS4, Has}. However, we show in Theorem~\ref{confSteklov:thm} that for any normalization different from~\eqref{barsigmak:eq} the corresponding optimization problem is not well-behaved. To be precise, we show the following.

\begin{proposition}
\label{nocrit:prop} 
For any $\alpha\ne 1$ the functionals
$$
F_{k,\alpha}(\Sigma^n,g) = \sigma_k(\Sigma^n,g)\vol(\partial \Sigma^n,g)^\alpha\vol(\Sigma^n,g)^{\frac{1+\alpha(1 - n)}{n}}
$$
do not have any smooth critical metrics $g$.
\end{proposition}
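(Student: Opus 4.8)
\emph{Proof proposal.} The plan is to run the standard first-variation analysis for Steklov eigenvalues --- the same one behind Section~\ref{alg_extremal:sec}, but keeping the exponents general --- and to show that the \emph{boundary} part of the resulting extremality relation already forces $\alpha=1$. Assume $k\geq 1$ (for $k=0$ one has $\sigma_0\equiv 0$ and $F_{0,\alpha}\equiv 0$, which is degenerate). Suppose $g$ is a smooth critical metric of $F_{k,\alpha}$; write $V=\vol(\partial\Sigma^n,g)$, $W=\vol(\Sigma^n,g)$ and $\beta=\tfrac{1+\alpha(1-n)}{n}$, so that $F_{k,\alpha}=\sigma_k\,V^{\alpha}W^{\beta}$.

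For a metric deformation with $\dot g=h$ one has the classical variation formulas $\tfrac{d}{dt}dv_{g_t}=\tfrac12(\mathrm{tr}_g h)\,dv_g$, $\tfrac{d}{dt}dA_{g_t}=\tfrac12\bigl(\mathrm{tr}_g h-h(\nu,\nu)\bigr)dA_g$ on $\partial\Sigma^n$, and $\tfrac{d}{dt}\bigl(|du|_{g_t}^2\,dv_{g_t}\bigr)=\bigl\langle h,\ \tfrac12|du|_g^2\,g-du\otimes du\bigr\rangle_g\,dv_g$. As in Section~\ref{alg_extremal:sec}, the assumption that $g$ is $F_{k,\alpha}$-critical is equivalent to the existence of Steklov $\sigma_k$-eigenfunctions $u_1,\dots,u_m$, normalized by $\sum_i\int_{\partial\Sigma^n}u_i^2\,dA_g=1$, such that for every symmetric $2$-tensor $h$
\begin{multline*}
\int_{\Sigma^n}\Bigl\langle h,\ \tfrac12\left(\sum_i|du_i|_g^2\right)g-\sum_i du_i\otimes du_i\Bigr\rangle_g\,dv_g
-\sigma_k\int_{\partial\Sigma^n}\left(\sum_i u_i^2\right)\tfrac12\bigl(\mathrm{tr}_g h-h(\nu,\nu)\bigr)dA_g\\
=-\frac{\alpha\sigma_k}{V}\int_{\partial\Sigma^n}\tfrac12\bigl(\mathrm{tr}_g h-h(\nu,\nu)\bigr)dA_g-\frac{\beta\sigma_k}{W}\int_{\Sigma^n}\tfrac12\,\mathrm{tr}_g h\,dv_g,
\end{multline*}
this being nothing but the first-variation identity for $\log F_{k,\alpha}=\log\sigma_k+\alpha\log V+\beta\log W$ expressed through the formulas above.

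I would then test this first against $h$ supported in the interior of $\Sigma^n$: all boundary integrals drop out, and one is forced into the pointwise identity $\tfrac12\bigl(\sum_i|du_i|_g^2\bigr)g-\sum_i du_i\otimes du_i=-\tfrac{\beta\sigma_k}{2W}\,g$ in the interior, hence on all of $\Sigma^n$ since Steklov eigenfunctions are smooth up to the boundary. (Its trace shows that $\sum_i|du_i|_g^2$ is a nonnegative constant, which rules out $\alpha<\tfrac1{n-1}$ and makes the $u_i$ vanish when $\alpha=\tfrac1{n-1}$; this observation is not needed below.) Feeding this identity back into the displayed relation for arbitrary $h$, the two interior integrals cancel exactly, and one is left with $\sigma_k\int_{\partial\Sigma^n}\bigl(\sum_i u_i^2\bigr)\bigl(\mathrm{tr}_g h-h(\nu,\nu)\bigr)dA_g=\tfrac{\alpha\sigma_k}{V}\int_{\partial\Sigma^n}\bigl(\mathrm{tr}_g h-h(\nu,\nu)\bigr)dA_g$. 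Since $\sigma_k>0$ and the restriction of $\mathrm{tr}_g h-h(\nu,\nu)$ to $\partial\Sigma^n$ ranges over all of $C^\infty(\partial\Sigma^n)$ as $h$ varies --- already $h=\psi g$ produces $(n-1)\psi|_{\partial\Sigma^n}$ --- this yields $\sum_i u_i^2\equiv \alpha/V$ on $\partial\Sigma^n$. Integrating over $\partial\Sigma^n$ and invoking the normalization $\sum_i\int_{\partial\Sigma^n}u_i^2\,dA_g=1$ gives $\alpha=1$, contradicting $\alpha\neq 1$.

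The only point that needs real care is the extremality characterization used in the second paragraph when $\sigma_k$ is a multiple eigenvalue: this is the ``probability measure on the eigenspace'' formalism already underlying Section~\ref{alg_extremal:sec}, and it is precisely what supplies the finite family $u_1,\dots,u_m$ together with the normalization that is ultimately responsible for the constraint $\alpha=1$. Everything else --- elliptic regularity up to the boundary, used to propagate the interior identity to all of $\Sigma^n$, and the surjectivity of $h\mapsto\mathrm{tr}_g h-h(\nu,\nu)$ onto boundary functions --- is routine. Finally, note that away from the scale-invariant locus (i.e.\ if the power of $\vol(\Sigma^n,g)$ differs from $\tfrac{1+\alpha(1-n)}{n}$) the statement is immediate, since then a homothety alone changes the functional to first order; the content of the proposition is that on the scale-invariant locus only $\alpha=1$ admits a critical metric.
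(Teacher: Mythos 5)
Your proof is correct for the statement as literally written and follows the same basic strategy as the paper's (first variation of $\log F_{k,\alpha}$, Hahn--Banach/null-direction machinery to produce the finite family $u_1,\dots,u_m$, then a pointwise identity forcing $\alpha=1$). The one substantive difference worth flagging: the paper proves the proposition as a corollary of Theorem~\ref{confSteklov:thm}, which restricts to \emph{conformal} variations $g(t)=e^{f(t)}g$, and therefore establishes the strictly stronger statement that $F_{k,\alpha}$ has no smooth $\bar\sigma_k$-\emph{conformally} extremal metrics for $\alpha\neq1$. Your argument tests against arbitrary interior-supported tensors $h$, so it only rules out criticality in the full space $\mR$; a conformally critical metric need not satisfy your interior tensor identity, only its trace. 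Fortunately this is easily repaired: restricting to $h=\psi g$ with $\psi$ supported in the interior yields exactly the trace part, $\tfrac{n-2}{2}\sum_i|du_i|_g^2=-\tfrac{n\beta\sigma_k}{2W}$ (constant), and that alone is enough to cancel the interior terms and leave your boundary relation $\sum_i u_i^2\equiv\alpha/V$, after which the normalization forces $\alpha=1$. So your proof adapts with no loss, but as written it proves less than the paper does. A second, cosmetic difference: to extract $\alpha=1$ the paper integrates the gradient identity over $\Sigma^n$ and invokes $\int_\Sigma|\nabla u_j|^2=\sigma_k\int_{\partial\Sigma}u_j^2$, whereas you integrate the boundary identity against the chosen normalization $\sum_i\int_{\partial\Sigma}u_i^2=1$; these are equivalent bookkeepings. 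One small slip in your narrative: the finite family $u_1,\dots,u_m$ is only \emph{implied} by extremality (via Lemma~\ref{lem:null-direction-stek} and the convex-hull argument), not equivalent to it without further hypotheses such as $\sigma_k>\sigma_{k-1}$; only the forward implication is needed here. The closing remark about the ``scale-invariant locus'' is a red herring, since $F_{k,\alpha}$ is scale invariant by construction for every $\alpha$.
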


\subsection{Extremal metrics in the space of all metrics} Finally, we give the geometric characterization for critical points of eigenvalues functionals in the space of all metrics. In comparison to the fixed conformal class, the common feature for both problems is that the corresponding object is additionally required to be {\em conformal}.

We start with the Laplacian eigenvalues on a closed manifold $M^n$. Let $\Phi\colon (M^n,[g])\to\mathbb{S}^{m-1}$ be a non-degenerate $n$-harmonic map. The map $\Phi$ is called conformal if the pullback of the round metric on $\mathbb{S}^m$ is conformal to $g$, i.e. $\Phi^*g_{\mathbb{S}^{m-1}}\in [g]$. In particular, a direct computation shows $\Phi^*g_{\mathbb{S}^{m-1}} = g_\Phi := \frac{1}{n}|d\Phi|_g^2g$. Moreover, by~\cite[Corollary 4]{Takeuchi}  a non-degenerate conformal $n$-harmonic map is conformal iff its image is a minimal submanifold. Combining the last two observations, we observe that $\Phi\colon(M,g_\Phi)\to\mathbb{S}^{m-1}$ is an isometric minimal immersion.

We recall that the smooth metric $g$ on $M^n$ is called $\bar\lambda_k$-extremal if it is a critical point of the functional $g\mapsto \bar\lambda_k(M^n,g)$ in the space $\mR$ of all metrics on $M^n$. Evidently, any $\bar\lambda_k$-extremal metric is also $\bar\lambda_k$-conformally extremal, so there exists the corresponding $n$-harmonic map. The following theorem states that for $\bar\lambda_k$-extremal metric the $n$-harmonic map can be chosen to be conformal.

\begin{theorem}
\label{Laplace-extremal:thm}
Let $M^n$ be an $n$-dimensional closed manifold. Suppose that the smooth metric $g$ is $\bar\lambda_k$-extremal. Then there exists a minimal immersion $\Phi\colon M\to\mathbb{S}^m$ such that $g=\alpha \Phi^*g_{\mathbb{S}^{m-1}}$ for some $\alpha>0$ and $\bar\lambda_k(M^n,\Phi^*g_{\mathbb{S}^{m-1}}) = n$. In particular, $\ind_S(\Phi)\leqslant k$.

Conversely, let $\Phi\colon M\to\mathbb{S}^{m-1}$ be a minimal immersion. Then the metric $\Phi^*g_{\mathbb{S}^{m-1}}$ is $\bar\lambda_{\ind_S(\Phi)}$-extremal.
\end{theorem}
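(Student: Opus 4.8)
\emph{Proof idea.} The strategy is to bootstrap from the conformal statement Theorem~\ref{Laplacianconf:thm} and then extract the extra first-order information carried by deformations of $g$ transverse to its conformal class; the new constraint turns out to be exactly \emph{conformality} of the associated $n$-harmonic map, after which Takeuchi's rigidity identifies the image as minimal. So, assume first that $g\in\mR$ is $\bar\lambda_k$-extremal. Since the conformal class $\mC=[g]$ sits inside $\mR$, $g$ is a fortiori $\bar\lambda_k$-conformally extremal, and Theorem~\ref{Laplacianconf:thm} supplies a non-degenerate $n$-harmonic map $\Phi=(u_1,\dots,u_m)\colon(M,\mC)\to\mathbb{S}^{m-1}$ with $g=\alpha g_\Phi$, $\bar\lambda_k(M^n,g_\Phi)=n$ and $\ind_S(\Phi)\le k$; moreover its components $u_i$ are precisely $\lambda_k$-eigenfunctions of $\Delta_{g_\Phi}$ for the eigenvalue $\lambda_k=n$. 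It remains only to upgrade $\Phi$ to a conformal map, i.e.\ to show $\Phi^{*}g_{\mathbb{S}^{m-1}}=\sum_i du_i\otimes du_i\in[g]$: granting this, the computation recalled just before Theorem~\ref{Laplace-extremal:thm} gives $\Phi^{*}g_{\mathbb{S}^{m-1}}=g_\Phi$, non-degeneracy makes $\Phi$ an isometric immersion of $(M,g_\Phi)$ into $\mathbb{S}^{m-1}$, and by \cite[Corollary~4]{Takeuchi} a non-degenerate $n$-harmonic map is conformal iff its image is minimal. Hence $\Phi$ is a minimal immersion with $g=\alpha\,\Phi^{*}g_{\mathbb{S}^{m-1}}$ and $\bar\lambda_k(M^n,\Phi^{*}g_{\mathbb{S}^{m-1}})=n$, which is the forward statement.

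To obtain conformality I invoke the algebraic description of extremal metrics developed in Section~\ref{alg_extremal:sec}. Writing the (one-sided) first variation of $\bar\lambda_k$ along a path $g_t=g+th+O(t^2)$ and using the normalization by $\vol(M^n,g)^{2/n}$ together with the variational characterization of $\lambda_k$, criticality of $g$ in $\mR$ forces the stress–energy tensor $\sum_i du_i\otimes du_i$, built from a suitable family of $\lambda_k$-eigenfunctions which we may take to be the components of $\Phi$, to be pointwise proportional to $g$. Conformal extremality by itself controls only the trace, $\sum_i|du_i|^2_g=\mathrm{const}$; the additional equations produced by the trace-free part of $h$ — available precisely because $g$ is critical in the full space $\mR$ and not merely in $\mC$ — annihilate the trace-free part of $\sum_i du_i\otimes du_i$. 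Thus $\sum_i du_i\otimes du_i=c\,g$ for a constant $c>0$, so $\Phi$ is (weakly) conformal, and $c$ is pinned by $\bar\lambda_k(M^n,g_\Phi)=n$ to yield $\Phi^{*}g_{\mathbb{S}^{m-1}}=g_\Phi$.

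For the converse, let $\Phi\colon M\to\mathbb{S}^{m-1}$ be a minimal immersion. Then $\Phi$ is isometric for $g_\Phi=\Phi^{*}g_{\mathbb{S}^{m-1}}$, its components are $n$-eigenfunctions of $\Delta_{g_\Phi}$ (Takahashi) and they solve~\eqref{n-harmonic:eq}, so $\Phi$ is a non-degenerate conformal $n$-harmonic map and $\ind_S(\Phi)$ is the first index at which $\lambda_k(M^n,g_\Phi)=n$. By the converse half of Theorem~\ref{Laplacianconf:thm}, $g_\Phi$ is $\bar\lambda_{\ind_S(\Phi)}$-conformally extremal, hence the first variation of $\bar\lambda_{\ind_S(\Phi)}$ vanishes along every conformal deformation. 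For a trace-free deformation $h$, all terms in the first variation proportional to $u_i^2 g$ or to $\mathrm{tr}_g h$ — including the one coming from the volume normalization — pair to zero against $h$, and one is left with $-c'\int_M\langle\sum_i du_i\otimes du_i,\,h\rangle\,dv_{g_\Phi}$ for some $c'>0$; this vanishes because $\sum_i du_i\otimes du_i=g_\Phi$ is pure trace. As every metric direction splits into a conformal part and a trace-free part, $g_\Phi$ satisfies the first-order extremality conditions of Definition~\ref{extremal:def} in all of $\mR$, i.e.\ it is $\bar\lambda_{\ind_S(\Phi)}$-extremal.

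The one genuine subtlety is that $\lambda_k$ is only Lipschitz in $g$ when the eigenvalue has multiplicity, so "first variation", "critical point" and "extremal" must be read in the appropriate one-sided / min–max sense, and the clean algebraic condition $\sum_i du_i\otimes du_i\propto g$ must be shown to be both \emph{necessary} (forward direction) and \emph{sufficient} (converse) for this notion of extremality. Setting up this dictionary — and, along the way, checking that the same family of eigenfunctions can witness both conformal and full extremality so that it can be identified with the coordinates of $\Phi$ — is the technical core; once it is in place, quoting \cite[Corollary~4]{Takeuchi} to pass between conformality and minimality is immediate. I expect this eigenvalue-perturbation bookkeeping, rather than any of the geometric steps, to be the main obstacle.
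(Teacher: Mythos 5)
Your geometric outline is correct and matches the paper: extract algebraic identities on a family of $\lambda_k$-eigenfunctions from extremality in $\mR$, deduce $\sum_i du_i\otimes du_i\propto g$, build $\Phi$ from those eigenfunctions, and invoke \cite[Corollary~4]{Takeuchi} to pass from conformality to minimality. But you have introduced an unnecessary (and as written, unresolved) complication by ordering the steps backwards. You first obtain $\Phi$ from Theorem~\ref{Laplacianconf:thm}, and then try to \emph{upgrade that same $\Phi$} to be conformal by a second first-variation argument, asserting that the family of eigenfunctions produced by the Hahn--Banach argument in $\mR$ "may be taken to be the components of $\Phi$". There is no reason for that: the family coming from the conformal Hahn--Banach argument and the family coming from the full-metric Hahn--Banach argument are a priori different subsets of $E_k(g)$. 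You flag this as the "technical core", but there is nothing to check — the right move is simply not to create two families. This is exactly what Theorem~\ref{LaplaceR:thm} (Section~\ref{alg_extremal:sec}) already does: a single application of the Hahn--Banach separation argument in $L^2(S^2M)$ produces one collection $u_1,\dots,u_m$ satisfying simultaneously $\sum u_i^2=n/\lambda_k$ and $\sum du_i\otimes du_i=g$; tracing the second identity gives $\sum|du_i|_g^2=n$, one then sets $\Phi=\sqrt{\lambda_k/n}\,(u_1,\dots,u_m)$ and repeats the verification of the $n$-harmonic map equation from Section~\ref{1:sec}. Conformality is not an extra property to be established for a pre-chosen $\Phi$ — it is read off directly from $\sum du_i\otimes du_i=g$ for the $\Phi$ you just built.

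Your converse is closer to the mark but phrased too loosely. For the one-sided notion of extremality in Definition~\ref{extremal:def} one should not speak of "the first variation vanishing"; the correct statement is that for any admissible deformation $h$ the quantity $\sum_j Q_h(u_j)$ vanishes because the trace-free part of $h$ pairs to zero against $\sum_j du_j\otimes du_j=g_\Phi$ and the conformal part is handled by the volume normalization, and then — using $\lambda_{\ind_S(\Phi)-1}<\lambda_{\ind_S(\Phi)}$ — one extracts $u_\pm$ in the span with $\pm Q_h(u_\pm)\leqslant 0$ and deduces the one-sided inequalities. This is the content of the converse half of Theorem~\ref{LaplaceR:thm}, which the paper invokes directly; your two-step argument (conformal extremality from Theorem~\ref{Laplacianconf:thm}, then trace-free directions by hand) recombines into exactly that computation.
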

\begin{remark}
The same exact theorem is proved by El Soufi, Ilias in~\cite{ESI2}. However, our definition of extremal metric is slightly different, so we include the proof for completeness.
\end{remark}

Let $\Sigma^n$ be a compact manifold with non-empty boundary. A pair $(g,\rho)$ is called $\bar\sigma_k$-extremal if it is a critical point of $\bar\sigma_k$ in $\mR\times C^\infty_{>0}(\Sigma^n)$. Of course, such a pair is also $\bar\sigma_k$-conformally extremal and there exists a corresponding free boundary $n$-harmonic map $\hat\Psi\colon(\Sigma^n,g)\to\mathbb{B}^{m}$. The following theorem states that for $\bar\sigma_k$-extremal pairs the map $\hat\Psi$ can be chosen to be conformal. One consequence is that any extremal density $\rho$ is constant. Indeed, since $g_{\hat\Psi} = \Phi^*g_{\mathbb{B}^{m}}$ one has $\rho_{\hat\Psi} =  |\partial_{\nu_{g_{\hat\Psi}}} \hat\Psi| = |\nu_{g_{\hat\Psi}}|\equiv 1$. In particular, the introduction of density does not lead to non-trivial extremal densities.

\begin{theorem}
\label{Steklov-extremal:thm}
Let $\Sigma^n$ be an $n$-dimensional compact manifold with boundary, $n\geqslant 3$. Suppose that the pair $(g,\rho)\in\mR\times C_{>0}^\infty(\partial\Sigma^n)$ is $\bar\sigma_k$-extremal. Then there exists a free boundary minimal immersion $\hat\Psi\colon \Sigma^n\to\mathbb{B}^{m}$ such that $(g,\rho)=(\alpha \hat\Psi^*g_{\mathbb{B}^{m}},\alpha^{-\frac{1}{2}})$ for some $\alpha>0$ and $\bar\sigma_k(\hat\Psi^*g_{\mathbb{B}^{m}},1) = 1$. In particular, $\ind_S(\hat\Psi)\leqslant k$.

Conversely, let $\hat\Psi\colon\Sigma^n\to\mathbb{B}^{m}$ be a free boundary minimal immersion. Then the pair $(\hat\Psi^*g_{\mathbb{B}^{m}},1)$ is $\bar\sigma_{\ind_S(\hat\Psi)}$-extremal.
\end{theorem}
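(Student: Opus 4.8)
The strategy is to reduce Theorem~\ref{Steklov-extremal:thm} to the conformal case (Theorem~\ref{Steklovconf:thm}) together with a characterization of \emph{when} a free boundary $n$-harmonic map is conformal, mirroring precisely the Laplacian argument sketched just above for Theorem~\ref{Laplace-extremal:thm}. For the forward direction: since a $\bar\sigma_k$-extremal pair $(g,\rho)$ is in particular $\bar\sigma_k$-conformally extremal, Theorem~\ref{Steklovconf:thm} furnishes a non-degenerate free boundary $n$-harmonic map $\hat\Psi\colon(\Sigma^n,[g])\to\mathbb{B}^m$ with $(g,\rho)=(\alpha g_{\hat\Psi},\alpha^{-1/2}\rho_{\hat\Psi})$. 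What remains is to exploit the \emph{extra} freedom — varying the metric transversally to the conformal class — to conclude that $\hat\Psi$ is in fact conformal, i.e.\ that $\hat\Psi^*g_{\mathbb{B}^m}$ is pointwise proportional to $g_{\hat\Psi}$ (equivalently, by the Takeuchi-type result quoted above, that $\hat\Psi(\Sigma^n)$ is a minimal submanifold with free boundary on $\mathbb{S}^{m-1}$). Once $\hat\Psi$ is conformal, $g_{\hat\Psi}=\hat\Psi^*g_{\mathbb{B}^m}$ and $\hat\Psi$ is an isometric free boundary minimal immersion; moreover $\rho_{\hat\Psi}=|\partial_{\nu_{g_{\hat\Psi}}}\hat\Psi|=|\nu_{g_{\hat\Psi}}|\equiv 1$, giving the stated constancy of the extremal density.

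\textbf{The conformality step.} This is the heart of the matter and will run as in the algebraic/geometric analysis of Section~\ref{geometricextremal:sec}. The extremality condition in $\mR\times C^\infty_{>0}(\partial\Sigma^n)$ gives, on top of the conformal-class relations, a further system of pointwise identities obtained by differentiating $\bar\sigma_k(\Sigma^n,g,\rho)$ along arbitrary symmetric $2$-tensors $h$ (modulo conformal directions, which are already handled). Using the Hadamard-type first-variation formula for Steklov eigenvalues with density, the transversal variations will produce an identity forcing the traceless part of a certain stress-energy tensor built from the eigenfunctions $\hat\Psi^1,\dots,\hat\Psi^m$ to vanish; combined with $\sum_i(\hat\Psi^i)^2\equiv 1$ on $\partial\Sigma^n$ and $\sum_i |d\hat\Psi^i|_g^2 = |d\hat\Psi|_g^2 = n$ in metric $g_{\hat\Psi}$, this yields that $\sum_i d\hat\Psi^i\otimes d\hat\Psi^i = \hat\Psi^*g_{\mathbb{B}^m}$ is conformal to $g$. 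The cleanest route is to first normalize to $g=g_{\hat\Psi}$, so that the $\hat\Psi^i$ are Steklov eigenfunctions with eigenvalue $1$ and density $\rho_{\hat\Psi}$, and then read off conformality from the vanishing of the traceless second fundamental form / the harmonic-map stress tensor; this is exactly the mechanism by which El Soufi–Ilias pass from conformally extremal to minimal in the Laplacian case, adapted to the boundary setting.

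\textbf{The converse} is the easy half and reverses the implications: given a free boundary minimal immersion $\hat\Psi\colon\Sigma^n\to\mathbb{B}^m$, it is in particular a non-degenerate conformal free boundary $n$-harmonic map (minimality $\Leftrightarrow$ conformal, by the quoted corollary of Takeuchi), so by the converse in Theorem~\ref{Steklovconf:thm} the pair $(g_{\hat\Psi},\rho_{\hat\Psi})=(\hat\Psi^*g_{\mathbb{B}^m},1)$ is $\bar\sigma_{\ind_S(\hat\Psi)}$-conformally extremal, with $\bar\sigma_{\ind_S(\hat\Psi)}(\hat\Psi^*g_{\mathbb{B}^m},1)=1$. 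To upgrade ``conformally extremal'' to ``extremal'' one checks that the transversal first variation also vanishes: here the computation simplifies because the eigenvalue is $1$, the density is constant, and the $\hat\Psi^i$ satisfy both $\sum_i(\hat\Psi^i)^2=1$ on $\partial\Sigma^n$ and the minimality relations, so the stress tensor appearing in the Hadamard formula is automatically trace-free, killing the variation in every direction $h$. This last point is essentially a direct calculation using the free boundary minimal immersion equations.

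\textbf{Expected main obstacle.} The delicate point is the boundary first-variation formula for $\bar\sigma_k(\Sigma^n,g,\rho)$ under a general deformation of $g$ that moves both the interior metric and the induced boundary metric while $\rho$ is held in $C^\infty_{>0}(\partial\Sigma^n)$: one must correctly account for how the volume term $\vol(\Sigma^n,g)^{(2-n)/n}$, the total mass $\int_{\partial\Sigma^n}\rho\,dA_g$, the Dirichlet-to-Neumann operator, and the eigenfunctions' normalization all respond, and then disentangle the conformal part (already known to vanish by Theorem~\ref{Steklovconf:thm}) from the transversal part. Handling possible multiplicity of $\sigma_k$ requires the usual Rellich–Nagy one-sided-derivative argument, so ``critical point'' must be interpreted in the sense of Definition~\ref{extremal:def}; this is routine but needs care. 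Once the correct transversal variation formula is in hand, the algebra forcing conformality is a bounded computation that parallels the closed-manifold case treated for Theorem~\ref{Laplace-extremal:thm}.
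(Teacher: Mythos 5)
Your proposal is correct and follows the paper's approach in all essentials: invoke the algebraic extremality conditions over $\mR\times C^\infty_{>0}(\partial\Sigma^n)$ (the Hadamard first-variation formula plus a Hahn--Banach separation argument), read off the $n$-harmonic structure from the trace of the resulting stress-tensor identity, read off conformality from its traceless part, and then apply Takeuchi's result to convert conformal $n$-harmonic into minimal immersion; the converse likewise reverses these implications. The only organizational difference worth flagging: you first pass through Theorem~\ref{Steklovconf:thm} to produce the free boundary $n$-harmonic map, and then separately invoke the transversal (non-conformal) variations to establish conformality. The paper instead proves Theorem~\ref{SteklovR:thm} in one stroke — an algebraic statement already over $\mR\times C^\infty_{>0}$ — which delivers both conclusions simultaneously from the single full-tensor identity $\sum_i du_i\otimes du_i = g/\vol(\Sigma,g)$, and furthermore obtains $\rho\equiv\mathrm{const}$ \emph{directly} by evaluating that identity on $(\nu,\nu)$ at the boundary, rather than deducing $\rho_{\hat\Psi}\equiv 1$ a posteriori from conformality as you do. Both routes reach the same endpoint, and your a-posteriori observation about $\rho$ is exactly the "important point" the paper singles out; the paper's route is simply more economical because it does not decompose the variation into conformal and transversal pieces.
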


\subsection{Free boundary harmonic annuli} 
\label{fbha:sec}
Let $\mC_T$ be the conformal class on the annulus containing the flat metric on $\mathbb{A}_T = [0,t]\times \mathbb{S}^1$. By the uniformization theorem any metric on an annulus is in $\mC_T$ for some $T > 0$. In Section~\ref{Annuli:sec} we construct explicit examples of rotationally symmetric free boundary harmonic maps $\hat\Psi_T\colon \mathbb{A}_T \to \mathbb{B}^3$ of spectral index $1$. Geometrically, their images are pieces of stretched catenoids, see Figure~\ref{fig:examples}. In particular, they correspond to $\bar\sigma_1$-conformally extremal pairs. Our main motivation is to demonstrate that there are free boundary $n$-harmonic maps $\hat\Psi$ such that the corresponding density $\rho_{\hat\Psi}$ is not constant, i.e. the introduction of densities is indeed necessary for the geometric characterization. To that end we show that for a particular range of $T$ the densities $\rho_{\hat\Psi_T}$ are locally constant, but not identically constant, i.e. they take different values on different boundary components.

Our analysis is reminiscent of that in~\cite{FS1}, where the authors study the first Steklov eigenvalue of rotationally symmetric metrics on $\mathbb{A}_T$ --- see also \cite{FSar, FTY} for higher eigenvalues. Moreover, Fraser and Schoen proved in~\cite{FS2} that the only $\bar\sigma_1$-extremal metric on the annulus is the metric on the critical catenoid. The corresponding value of $T$ is $T_1=2t_1$, where $t_1\approx 1.2$ is the unique solution to $t=\coth t$.

\begin{theorem}
\label{annulus:thm}
For any $T\geqslant T_1$ there exists a rotationally symmetric free boundary harmonic map $\hat\Psi_T\colon \mathbb{A}_T\to\mathbb{B}^3$ of spectral index $1$. Furthermore, the corresponding $\bar\sigma_1$-conformally extremal pair $(g_{\hat\Psi_T},\rho_{\hat\Psi_T})$ possesses the following properties.
\begin{enumerate}
\item $\hat\Psi_{T_1}$ is the immersion of the minimal catenoid.
\item The pair $(g_{\hat\Psi_T},\rho_{\hat\Psi_T})$ is rotationally symmetric, in particular, $\rho_{\hat\Psi_T}$ is constant on each boundary component. There exists $T_2$ such that for $T\geqslant T_2$ one has 
$\rho_{\hat\Psi_T}(0)\ne\rho_{\hat\Psi_T}(T)$.
\item One has
\begin{equation}
\label{AT:eq1}
\bar\sigma_1(\mathbb{A}_T,g_{\hat\Psi_T},\rho_{\hat\Psi_T}) = 2E(\hat\Psi_T)>2\pi
\end{equation}
and 
\begin{equation}
\label{AT:eq2}
\lim_{T\to\infty} \bar\sigma_1(\mathbb{A}_T,g_{\hat\Psi_T},\rho_{\hat\Psi_T}) = 2\pi.
\end{equation}
\end{enumerate}
\end{theorem}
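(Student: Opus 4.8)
\textbf{Proof proposal for Theorem~\ref{annulus:thm}.}

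The plan is to follow the ansatz method of Fraser--Schoen~\cite{FS1} adapted to the $n$-harmonic (here $n=2$, so ordinary harmonic) setting. Since we are on a surface, a rotationally symmetric harmonic map $\hat\Psi_T\colon\mathbb{A}_T=[0,T]\times\mathbb{S}^1\to\mathbb{B}^3$ with the catenoid-type symmetry should have the form $\hat\Psi_T(t,\theta)=\bigl(\varphi(t)\cos\theta,\varphi(t)\sin\theta,\psi(t)\bigr)$ for profile functions $\varphi,\psi$. First I would write down the harmonic map system for this ansatz: with respect to the flat metric on $\mathbb{A}_T$, harmonicity of a map into $\mathbb{R}^3$ is just componentwise Laplace, but the image must lie in no constraint in the interior (the target is the flat ball $\mathbb{B}^3$, and an interior-valued harmonic map into Euclidean space is the same as a componentwise harmonic map), so the components $\hat\Psi_T$ are harmonic in the flat metric. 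This forces $\varphi''-\varphi=0$ and $\psi''=0$ after separating the $\theta$-dependence, i.e. $\varphi(t)=a\cosh(t-c)$ and $\psi(t)=bt+d$; up to isometries of $\mathbb{B}^3$ and reparametrization one reduces to a two-parameter family. The free boundary conditions $|\hat\Psi_T|=1$ on $\{0,T\}\times\mathbb{S}^1$ and orthogonal meeting $\partial_\nu\hat\Psi_T\parallel\hat\Psi_T$ there then become two (or, by symmetry considerations, effectively a coupled pair of) transcendental equations in the remaining parameters and in $T$. I would show these are solvable precisely for $T\geqslant T_1$, with $T=T_1$ giving $a=b$ and the standard catenoid --- this yields item (1) and the existence statement.

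Next, for item (2), rotational symmetry of $\hat\Psi_T$ is built into the ansatz, so $g_{\hat\Psi_T}=\tfrac12|d\hat\Psi_T|^2_{g_0}\,g_0$ and $\rho_{\hat\Psi_T}=|\partial_{\nu}\hat\Psi_T|/|d\hat\Psi_T|$ depend only on $t$; restricted to each boundary circle they are constants. The claim that $\rho_{\hat\Psi_T}(0)\neq\rho_{\hat\Psi_T}(T)$ for large $T$ is an asymmetry statement: as $T\to\infty$ the two transcendental conditions decouple in the limit, and the profile on the two ends approaches two different ``half-catenoid'' limits with different normal-derivative-to-energy ratios unless a symmetry is forced; I would verify by expanding the boundary equations that the symmetric solution $\varphi$ even about $t=T/2$ is \emph{not} the one selected for $T$ large --- equivalently, the symmetric branch ceases to satisfy spectral index $1$ past some $T_2$, and the index-$1$ branch is asymmetric. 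Establishing the spectral index is itself a substep: $L^{\mD}_{g_{\hat\Psi_T},\hat\Psi_T}=\mD_{g_{\hat\Psi_T}}-\rho_{\hat\Psi_T}$ annihilates each component of $\Psi_T$; the three coordinate functions are $\cos\theta,\sin\theta$ (two copies) and a function even in $\theta$, so by separating variables in $\theta$ and analyzing the resulting one-dimensional Dirichlet-to-Neumann problem on $[0,T]$ one counts negative eigenvalues and shows the index equals $1$ for all $T\geqslant T_1$ (the $m=0$ Fourier mode contributes the single negative eigenvalue, all $|m|\geqslant 1$ modes are $\geqslant 0$ with the $|m|=1$ mode contributing the zero eigenvalue realized by the coordinates). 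This spectral-index computation, together with Theorem~\ref{Steklovconf:thm}, gives that $(g_{\hat\Psi_T},\rho_{\hat\Psi_T})$ is $\bar\sigma_1$-conformally extremal.

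Finally, for item (3): by Theorem~\ref{Steklovconf:thm} (the value statement in Table~\ref{highd:table} for $n=2$) one has $\bar\sigma_1(\mathbb{A}_T,g_{\hat\Psi_T},\rho_{\hat\Psi_T})=2E(\hat\Psi_T)$, so it remains to compute the Dirichlet energy $E(\hat\Psi_T)=\tfrac12\int_{\mathbb{A}_T}|d\hat\Psi_T|^2\,dv_{g_0}$ on the explicit profile and to carry out an asymptotic analysis as $T\to\infty$. Using $\varphi=a\cosh(t-c)$, $\psi=bt+d$ one gets $|d\hat\Psi_T|^2=2(\varphi'^2+\varphi^2+\psi'^2)=2(2a^2\cosh^2(t-c)-a^2+b^2)$ up to constants, so $E(\hat\Psi_T)$ is an explicit elementary function of the parameters, hence of $T$ via the boundary equations. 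The inequality $2E(\hat\Psi_T)>2\pi$ should follow from a monotonicity/convexity argument in $T$ combined with the boundary value at $T=T_1$ (which gives the critical catenoid, whose normalized $\bar\sigma_1$ is known to exceed $2\pi$ --- indeed Fraser--Schoen's value for the critical catenoid), and the limit $2\pi$ comes from the two ends degenerating into two disjoint flat disks (each contributing $\pi$, the round-disk value $\bar\sigma_1(\mathbb{D})=1\cdot 2\pi\cdot$\,(the disk normalization) --- I would pin down the constant carefully so the sum is exactly $2\pi$). The main obstacle I anticipate is the \emph{transcendental boundary analysis}: solving and counting solutions of the free-boundary equations for all $T\geqslant T_1$, proving the correct solution has spectral index $1$, and locating $T_2$ past which the selected solution is genuinely asymmetric --- this mixes ODE phase-plane reasoning with a delicate Sturm--Liouville eigenvalue count, and is where~\cite{FS1} had to work hardest. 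The energy asymptotics, by contrast, are a routine if somewhat lengthy computation once the parametrization is in hand.
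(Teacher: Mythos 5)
Your overall strategy is the same as the paper's: a rotationally symmetric ansatz $\hat\Psi_T(t,\theta)=(\varphi(t)\cos\theta,\varphi(t)\sin\theta,\psi(t))$ with $\varphi$ hyperbolic and $\psi$ linear, transcendental free-boundary conditions, explicit energy computation, and a Fourier-mode count for the spectral index. The paper phrases it via the weighted Steklov eigenvalue problem on $[0,T]\times\mathbb{S}^1$ with densities $\rho_1=f(0),\,\rho_2=f(T)$, parametrizes the family by $q=\rho_1/\rho_2$ rather than by $T$ directly, and reduces the boundary analysis to a single function $T_q$ whose monotonicity and explicit two-sided bounds (their Lemma 5.1) drive the rest of the proof --- that is the same mechanism you describe, just organized so the transcendental solvability and the index count fall out of the multiplicity-three condition $\sigma^{(0)}=\sigma^{(1)}_-=\sigma_1$.

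There is, however, a genuine gap in your sketch of item~(2). You argue that for large $T$ ``the symmetric branch ceases to satisfy spectral index~$1$ past some $T_2$, and the index-$1$ branch is asymmetric.'' But there is no symmetric branch for $T>T_1$: the function $T_q$ attains its minimum $T_1$ exactly at $q=1$, so every solution with $T>T_1$ already has $q\neq 1$, i.e.\ an asymmetric map. The statement to be proved is different and more delicate: even though the map is asymmetric, the induced boundary densities $\rho_{\hat\Psi_T}(0)$ and $\rho_{\hat\Psi_T}(T)$ could a priori still coincide, because $\rho_{\hat\Psi_T}\propto 1/|d\hat\Psi_T|_g$ and the normal-derivative scaling partially compensates for the asymmetry in $q$. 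One therefore needs a direct quantitative comparison of $|d\hat\Psi_T|^2_g$ at $t=0$ and $t=T$; the paper rewrites this as $b(q)\neq 1/q$ and proves it for $q\geqslant 2$ via the bounds $q+1/q<T_q<(1+q)^2/q$ of Lemma~5.1. Your decoupling argument does not substitute for this estimate and is not obviously repairable without it. A smaller issue: for the limit in item~(3), the degeneration $T\to\infty$ produces a single disk bubble (normalized value $2\pi$), not two disjoint disks each contributing $\pi$; on a disjoint union $\sigma_1=0$, so the ``sum of $\pi$'' heuristic does not apply. You flag this uncertainty yourself, but it should be resolved by the explicit formula $\bar\sigma_1=2\pi(1+q)^2/(qT_q)$ together with $q+1/q<T_q<(1+q)^2/q$, which the paper uses to get both the strict lower bound $2\pi$ and the limit.
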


In fact, we conjecture that the $(g_{\hat\Psi_T},\rho_{\hat\Psi_T})$ is a $\bar\sigma_1$-conformally maximal pair. Keeping in mind the conformal invariance of Section~\ref{Steklovsurfaces:sec} this can be formulated in the following way.
\begin{conjecture}
    For any $T \geqslant T_1$, let $\mC_T$ be the conformal class containing the flat metric on $[0,T] \times \mathbb{S}^1$. Then
$$
\sup_{g\in \mC_T}\bar\sigma_1(\mathbb{A},g) = 2E(\hat\Psi_T),
$$ 
where $\hat\Psi_T$ is the free boundary harmonic map obtained in Theorem~\ref{annulus:thm}.

In particular, for these conformal classes $\bar\sigma_1$-conformally maximal pairs can be chosen to be rotationally symmetric.
\end{conjecture}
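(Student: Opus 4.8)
The plan is to run the concentration--compactness strategy for maximizing the first Steklov eigenvalue in a fixed conformal class on a surface (in the spirit of Petrides and of Nadirashvili--Sire) and then to feed its output into the classification of Section~\ref{Annuli:sec}. By the conformal invariance discussed in Section~\ref{Steklovsurfaces:sec}, for $n=2$ one has $\sup_{g\in\mC_T}\bar\sigma_1(\mathbb{A},g)=\sup_{(g,\rho)}\bar\sigma_1(\mathbb{A},g,\rho)$, the latter supremum over $\mC_T\times C_{>0}^\infty(\partial\mathbb{A})$; since $\sigma_1(\mathbb{A},g,\rho)=\lambda_1(\mathbb{A},g,\rho\,dA_g)$ and the Dirichlet energy is conformally invariant in two dimensions, this common value equals $\sup_\mu\lambda_1(\mathbb{A}_T,\mu)$, the supremum taken over probability measures $\mu$ on $\partial\mathbb{A}$ with $\lambda_1$ the measure eigenvalue~\eqref{measureev:eq} computed on the fixed conformal surface $\mathbb{A}_T$. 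For the lower bound, note that $g_{\hat\Psi_T}\in\mC_T$ since $\hat\Psi_T$ is a free boundary harmonic map on $(\mathbb{A}_T,\mC_T)$; by Theorem~\ref{annulus:thm}, $\bar\sigma_1(\mathbb{A}_T,g_{\hat\Psi_T},\rho_{\hat\Psi_T})=2E(\hat\Psi_T)>2\pi$, and by the conformal invariance above this equals $\bar\sigma_1(\mathbb{A},\tilde g)$ for some $\tilde g\in\mC_T$, so $\sup_{g\in\mC_T}\bar\sigma_1(\mathbb{A},g)\geqslant 2E(\hat\Psi_T)>2\pi$.

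The second step is to show the supremum is attained by a smooth positive density. Given a maximizing sequence $\mu_j$, run the bubbling analysis: because $T$ is fixed the conformal type of $\mathbb{A}_T$ does not degenerate (both boundary circles persist), and since each $\mu_j$ lives on $\partial\mathbb{A}$ the only possible loss of compactness is concentration of boundary mass at a boundary point, which splits off a disk bubble carrying normalized mass at most $2\pi$ by the Weinstock--Fraser--Schoen bound $\bar\sigma_1\leqslant 2\pi$ for metrics on the disk. Since the supremum exceeds $2\pi$ by the first paragraph, no concentration can occur, so after extraction $\mu_j\rightharpoonup\mu_\infty$ with $\mu_\infty(\partial\mathbb{A})=1$ and $\lambda_1(\mathbb{A}_T,\mu_\infty)=\sup$; the regularity theory for such extremal measures (the surface analogue of the regularity input used elsewhere in this paper; cf. the work of Nadirashvili--Sire and of Petrides) shows $\mu_\infty=\rho^*\,dA_{g^*}$ for some smooth $g^*\in\mC_T$ and $\rho^*\in C_{>0}^\infty(\partial\mathbb{A})$, with $\rho^*$ positive on both boundary components (otherwise the problem degenerates to a disk and $\bar\sigma_1\leqslant 2\pi$). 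The pair $(g^*,\rho^*)$ is then $\bar\sigma_1$-conformally extremal, hence by Theorem~\ref{Steklovconfdim2:thm} induced by a non-degenerate free boundary harmonic map $\hat\Psi^*\colon(\mathbb{A}_T,\mC_T)\to\mathbb{B}^m$ with $\ind_S(\hat\Psi^*)\leqslant 1$; since $\ip{L^\mD_{g^*,\hat\Psi^*}1,1}=-\int\rho^*<0$ forces $\ind_S(\hat\Psi^*)\geqslant1$, we get $\ind_S(\hat\Psi^*)=1$ and $\sup_{g\in\mC_T}\bar\sigma_1(\mathbb{A},g)=\bar\sigma_1(\mathbb{A}_T,g_{\hat\Psi^*},\rho_{\hat\Psi^*})=2E(\hat\Psi^*)$.

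It remains to show $E(\hat\Psi^*)=E(\hat\Psi_T)$, which I would do in two sub-steps. First, prove that a maximizing free boundary harmonic map of spectral index $1$ on $(\mathbb{A}_T,\mC_T)$ is rotationally symmetric up to an isometry of $\mathbb{B}^m$: the optimization is invariant under the isometries $O(2)$ of the standard cylinder $[0,T]\times\mathbb{S}^1$ and under the reflection $t\mapsto T-t$, and one transfers this symmetry to the maximizer, exploiting that the index-$1$ condition together with the balancing relation $\int_{\partial\mathbb{A}}\Psi^*\,\rho_{\hat\Psi^*}\,dA=0$ rigidly constrains the first eigenspace. Second, with rotational symmetry in hand, the defining equations reduce to the ODE system analyzed in Section~\ref{Annuli:sec}, which shows that a rotationally symmetric free boundary harmonic map $\mathbb{A}_T\to\mathbb{B}^m$ of spectral index $1$ is, up to an isometry of $\mathbb{B}^m$, precisely the catenoid-type map $\hat\Psi_T$; hence $E(\hat\Psi^*)=E(\hat\Psi_T)$. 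Together with the first two paragraphs this gives $\sup_{g\in\mC_T}\bar\sigma_1(\mathbb{A},g)=2E(\hat\Psi_T)$ and displays $(g_{\hat\Psi_T},\rho_{\hat\Psi_T})$ as a rotationally symmetric maximal pair.

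The main obstacle is the symmetrization sub-step: unlike the classical situation with a simple first eigenvalue, the first eigenvalue of a $\bar\sigma_1$-maximizer has multiplicity $\geqslant m\geqslant 2$, so $O(2)$-invariance of the metric does not follow from invariance of a single eigenfunction. What is genuinely needed is a classification/uniqueness theorem for index-$1$ free boundary harmonic maps of the annulus --- the analogue of Fraser--Schoen's theorem that the critical catenoid is the unique $\bar\sigma_1$-extremal metric on the annulus, but now fibered over the modulus $T$ and allowing a non-constant boundary density. A secondary difficulty is to make the bubbling analysis of the second paragraph fully rigorous for the Steklov problem \emph{with density} on a surface with two boundary components, in particular to check that the only escape-of-mass mechanism is the Weinstock disk of normalized mass exactly $2\pi$; the ingredients exist in the literature but require care to assemble in this setting. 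Should the symmetrization resist a direct proof, a fallback is to establish attainment and then bound the energy of an abstract maximizing free boundary harmonic map from above by $2E(\hat\Psi_T)$ through a monotonicity-in-$T$ comparison.
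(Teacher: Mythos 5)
This statement is stated in the paper as a \emph{conjecture}: the authors explicitly do not prove it, offering only consistency checks (the strict lower bound $2E(\hat\Psi_T)>2\pi$ matching the Matthiesen--Petrides rigidity result, and the limit $2E(\hat\Psi_T)\to 2\pi$ matching Medvedev's analysis of conformal degeneration). So there is no proof in the paper to compare against, and your proposal should be judged as an attempted resolution of an open problem.

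Your first paragraph (the lower bound $\sup_{g\in\mC_T}\bar\sigma_1\geqslant 2E(\hat\Psi_T)$) is correct and is exactly the content of the paper's equation~\eqref{AT:eq1} combined with the two-dimensional conformal invariance of Section~\ref{Steklovsurfaces:sec}. The second paragraph (attainment of the supremum by a smooth pair, using $\sup>2\pi$ to rule out disk bubbles) is plausible and essentially available in the literature you cite, since in two dimensions the density problem reduces to the classical one. The genuine gap is the one you yourself flag: the symmetrization step. Knowing that the optimization problem is $O(2)$-invariant does not imply that a maximizer is $O(2)$-invariant unless you also know the maximizer is unique (or have a symmetrization procedure that does not decrease $\bar\sigma_1$ while staying in $\mC_T$ --- no such procedure is known, since $\sigma_1$ is neither concave nor convex under averaging of metrics or densities). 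The needed input is a uniqueness/classification theorem for spectral-index-one free boundary harmonic maps from $(\mathbb{A},\mC_T)$ to $\mathbb{B}^m$ with arbitrary boundary density, i.e.\ the conformally-constrained, weighted analogue of Fraser--Schoen's uniqueness of the critical catenoid; that theorem required the full structure theory of free boundary minimal annuli and has no known analogue here. The paper's Proposition~\ref{annulus:prop} even shows that for small $T$ no rotationally symmetric maximizer can exist, which underlines that symmetry of maximizers is a delicate, $T$-dependent phenomenon rather than a soft consequence of invariance. Your fallback (a monotonicity-in-$T$ energy comparison) is likewise unsubstantiated. In short: the architecture of your argument is the natural one, but the load-bearing step is precisely the open part of the conjecture, so the proposal does not constitute a proof.
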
 

Properties (1), (3) of Theorem~\ref{annulus:thm} are consistent with this conjecture. Indeed, it is proven in~\cite{MP} that for any conformal class $\mC$ on the annulus $\mathbb{A}$ one has $\sup_{g\in\mC}\bar\sigma_1(\mathbb{A},g)>2\pi$, which agrees with~\eqref{AT:eq1}. At the same time, Medvedev in~\cite{Med} studied the behaviour of maximizers under the conformal degeneration. In particular, he established that the relation~\eqref{AT:eq2} holds for $\bar\sigma_1$-conformally maximal metrics.

Surprisingly enough, the considerations used to prove Theorem~\ref{annulus:thm} imply that for $T<T_1$ there are no rotationally symmetric free boundary harmonic maps  $\hat\Psi_T\colon \mathbb{A}_T\to\mathbb{B}^3$ of spectral index $1$. Since by~\cite{FS2, KKP} the multiplicity of $\sigma_1$ on any annulus can not exceed $3$, either for $T<T_1$ the $\bar\sigma_1$-conformally maximal metric is not rotationally symmetric or the corresponding map has the image in $\mathbb{B}^2$. The latter situation seems unlikely and we expect that for $T<T_1$ the $\bar\sigma_1$-conformally maximal pairs in $\mC_T$ are no longer rotationally symmetric. We can not prove this in full generality, but the following  holds.

\begin{proposition} \label{annulus:prop}
There exists $\tilde{T}<T_1$ such that for all $T\leqslant \tilde{T}$ the conformal class $\mC_T$ does not have rotationally symmetric $\bar\sigma_1$-conformally maximal pairs.
\end{proposition}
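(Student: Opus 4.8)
The plan is to combine the classification of rotationally symmetric free boundary harmonic maps $\mathbb{A}_T\to\mathbb{B}^3$ from the proof of Theorem~\ref{annulus:thm} with the known quantitative behaviour of $\bar\sigma_1$-conformally maximal pairs near the degenerate end of moduli space. First I would extract from the construction behind Theorem~\ref{annulus:thm} the following sharp fact: a rotationally symmetric free boundary harmonic map $\hat\Psi_T\colon\mathbb{A}_T\to\mathbb{B}^3$ of spectral index $1$ exists \emph{only} for $T\geqslant T_1$, and for such $T$ the induced value $2E(\hat\Psi_T)$ is a specific explicit function of $T$; in particular by \eqref{AT:eq1}--\eqref{AT:eq2} this value lies strictly above $2\pi$ and decreases to $2\pi$ as $T\to\infty$, so $\sup_{T\geqslant T_1}2E(\hat\Psi_T)=2E(\hat\Psi_{T_1})$ is attained at $T=T_1$ and equals the critical catenoid value. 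Next, suppose toward a contradiction that for some $T<T_1$ the class $\mC_T$ admits a rotationally symmetric $\bar\sigma_1$-conformally maximal pair $(g,\rho)$. By Theorem~\ref{Steklovconfdim2:thm} (the $n=2$ case, so densities are absorbed by the conformal factor) this pair is induced by a non-degenerate free boundary harmonic map $\hat\Psi\colon(\mathbb{A}_T,\mC_T)\to\mathbb{B}^{m}$ with $\mathrm{ind}_S(\hat\Psi)\leqslant 1$, and with $\bar\sigma_1(\mathbb{A}_T,g,\rho)=2E(\hat\Psi)$.

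The second step is to upgrade this $\hat\Psi$ to a rotationally symmetric map whose image lies in $\mathbb{B}^3$. Rotational symmetry of the pair $(g,\rho)$ forces the $\sigma_1$-eigenspace, hence the components of $\Psi=\hat\Psi|_{\partial\mathbb{A}_T}$, to transform under a finite-dimensional representation of $S^1$; decomposing into Fourier modes and using that $\sigma_1$ has multiplicity at most $3$ on any annulus (by~\cite{FS2,KKP}), one sees that after an ambient isometry the image of $\hat\Psi$ is contained in a $\mathbb{B}^3$ (or a lower-dimensional ball), and the map is rotationally symmetric in the sense of Section~\ref{Annuli:sec}. If the image lies in $\mathbb{B}^2$ we are in the exceptional case that the proposition does not need to exclude in general, so I would restrict the statement exactly as in the paragraph preceding it — or observe that a rotationally symmetric proper harmonic map to $\mathbb{B}^2$ of spectral index $1$ is itself ruled out by the same ODE analysis for $T$ small. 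In the main case we now have a rotationally symmetric free boundary harmonic map $\mathbb{A}_T\to\mathbb{B}^3$ of spectral index $1$ with $T<T_1$, which contradicts the classification from Step~1. Hence no such maximal pair exists, and we may take any $\tilde T<T_1$; the reason one states it with a threshold $\tilde T$ rather than all $T<T_1$ is to leave room for the $\mathbb{B}^2$ case and for the possibility that the multiplicity bound only yields the contradiction cleanly when $T$ is bounded away from $T_1$.

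The main obstacle is the second step: ruling out, or controlling, maximal pairs whose associated harmonic map has image in a proper subsphere $\mathbb{S}^1\subset\mathbb{S}^2$. A rotationally symmetric proper harmonic map $\mathbb{A}_T\to\mathbb{B}^2$ corresponds, after lifting, to essentially a one-dimensional boundary problem, and I would analyze the resulting ODE (as in the catenoid computation of Section~\ref{Annuli:sec}, but in the plane) to show that the spectral-index-$1$ condition again forces $T\geqslant$ some explicit threshold; combined with the $\mathbb{B}^3$ classification this pins down $\tilde T$. A secondary technical point is justifying that a rotationally symmetric maximal pair genuinely produces a \emph{rotationally symmetric} harmonic map rather than merely one with rotationally symmetric induced metric — this uses that the $\sigma_1$-eigenspace is $S^1$-invariant, that one may average the map over the $S^1$-action while preserving the free boundary harmonic map equations (which are the Euler--Lagrange equations of the conformally invariant energy, hence compatible with the symmetry reduction), and that non-degeneracy is preserved for $T$ away from the degenerate values. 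Everything else is bookkeeping against Theorems~\ref{annulus:thm} and~\ref{Steklovconfdim2:thm}.
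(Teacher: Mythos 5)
Your first step is correct and matches the paper: Lemma~\ref{lem:Tq-facts} shows $T_q$ attains its minimum at $q=1$, so no rotationally symmetric free boundary harmonic map $\mathbb{A}_T\to\mathbb{B}^3$ of spectral index~$1$ exists for $T<T_1$. You also correctly identify the $\mathbb{B}^2$ case as the crux. But your proposed resolution of that case is wrong on a substantive point, and this is exactly where the paper does something different.

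You claim that ``a rotationally symmetric proper harmonic map to $\mathbb{B}^2$ of spectral index $1$ is itself ruled out by the same ODE analysis for $T$ small.'' This is false. Taking $q=1$ (i.e.\ $\rho_1=\rho_2$) and the first Fourier mode, the map with components proportional to $(\cosh t - \sigma_1\sinh t)\cos\theta$ and $(\cosh t - \sigma_1\sinh t)\sin\theta$ is a rotationally symmetric free boundary harmonic map $\mathbb{A}_T\to\mathbb{B}^2$ of spectral index $1$ for \emph{every} $T$ (indeed, for $T<T_1$ one has $\sigma^{(1)}_-<\sigma^{(0)}$, so the $n=1$ modes are genuinely the first eigenfunctions). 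So there is no existence threshold to exploit. The pair it induces is conformally \emph{extremal}, and the proposition is about \emph{maximality}: the paper disposes of this pair not by non-existence but by a value comparison. One computes directly that for this $\mathbb{B}^2$ critical pair the normalized eigenvalue equals $4\pi\tanh(T/2)$, which is $\leqslant 2\pi$ whenever $T\leqslant\tilde T$ with $2\tanh(\tilde T/2)=1$ ($\tilde T\approx 1.10$). Since it is proved in~\cite{MP} that $\sup_{g\in\mC}\bar\sigma_1(\mathbb{A},g)>2\pi$ strictly for every conformal class, this critical pair cannot be maximal for $T\leqslant\tilde T$. This also explains the origin of the threshold $\tilde T<T_1$, which you could not account for: for $\tilde T<T<T_1$ the $\mathbb{B}^2$ critical pair has $\bar\sigma_1>2\pi$, so this particular comparison does not rule it out, and the paper correspondingly makes no claim on that range. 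Without the quantitative input from~\cite{MP} (or some substitute), your argument cannot close the $\mathbb{B}^2$ gap.
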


In fact, this behaviour for small $T$ is supported by the analysis of conformal degenerations in~\cite{Med} and the example in~\cite{MP}. Both papers suggest that as $T\to 0$ the limit of corresponding free boundary harmonic maps is a single boundary bubble, which is impossible for rotationally symmetric maps.

\section{Algebraic extremality conditions}
\label{alg_extremal:sec}

In this section we obtain algebraic conditions condition on extremal metrics. They are used later in Section~\ref{geometricextremal:sec} to complete the geometric characterization of extremal metrics. 

\begin{definition}
\label{extremal:def}
    We say that a metric $g$ is $F$-extremal for some functional $F$ if for all one-parameter smooth family of metrics $g(t)$ with
    $g(0) = g$, we have either
    $$F(g(t)) \leqslant F(g) + o(t) \qquad \text{or} \qquad F(g(t)) \geqslant F(g) + o(t)$$
    as $t \to 0$.

    A metric is $F$-conformally extremal if it is $F$-extremal in the conformal class of $g$. 
\end{definition}
In the following the functional $F$ will be a (Laplace or Steklov) eigenvalue with the appropriate normalization. This definition was introduced by Nadirashvili in~\cite{NadirashviliTorus} in the context of normalized Laplace eigenvalues.

\subsection{Extremality conditions for a fixed conformal class}

\begin{theorem}
\label{Laplacianconfalg:thm}
Let $M^n$ be an $n$-dimensional closed manifold, $n\geqslant 3$ and let $\mC$ be a conformal class of metrics on $M^n$. Suppose that the metric $g\in \mC$ is conformally extremal for the functional 
$$
\bar\lambda_k(M^n,g) = \lambda_k(M^n,g) \vol(M^n,g)^\frac{2}{n}.
$$
Then there exists a collection $u_1,\ldots, u_m$ of $\lambda_k(M^n,g)$-eigenfunctions such that 
\begin{itemize}
    \item[1)] $\sum_{i=1}^m u_i^2 = \frac{1}{\lambda_k}$
    \item[2)] $\sum_{i=1}^m |d u_i|^2_g = 1$
\end{itemize}
Conversely, if there exists a collection of $\lambda_k(M^n,g)$-eigenfunctions satisfying $1)-2)$ and additionally $\lambda_k(M^n,g)>\lambda_{k-1}(M^n,g)$ or $\lambda_k(M^n,g)<\lambda_{k+1}(M^n,g)$, then $g$ is extremal for the functional $\bar\lambda_k(M^n,g)$ in $\mC$.
\end{theorem}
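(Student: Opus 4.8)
The plan is to compute the first variation of the normalized eigenvalue functional $\bar\lambda_k(M^n,g)$ along a conformal deformation $g(t) = (1 + t\varphi + o(t))g$, for an arbitrary $\varphi \in C^\infty(M^n)$, and to extract the two algebraic identities from the requirement that the variation has a sign. The first step is to record how the ingredients scale under a conformal change. Since only the conformal factor varies, I would write $g(t) = e^{2t\psi+o(t)}g$ with $2\psi = \varphi$; then $\vol(M^n,g(t)) = \int e^{nt\psi}\,dv_g + o(t)$, so $\tfrac{d}{dt}\big|_0 \vol(M^n,g(t))^{2/n} = \tfrac{2}{n}\vol(M^n,g)^{\frac{2-n}{n}}\int n\psi\,dv_g$. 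For the eigenvalue itself, because $\lambda_k$ may have multiplicity, $\lambda_k(M^n,g(t))$ is only one-sided differentiable; the standard Hadamard-type perturbation formula gives that the one-sided derivatives are the extreme eigenvalues of the quadratic form $u \mapsto -\tfrac{d}{dt}\big|_0 Q_{g(t)}(u)$ on the $\lambda_k$-eigenspace $E_k$, where $Q_{g(t)}(u) = \int |du|^2_{g(t)}\,dv_{g(t)} - \lambda_k\int u^2\,dv_{g(t)}$ with $u$ normalized in $L^2(g)$. A key simplification in dimension $n\geqslant 3$: $|du|^2_{g(t)}\,dv_{g(t)} = e^{(n-2)t\psi}|du|^2_g\,dv_g + o(t)$, so differentiating,
\begin{equation*}
\frac{d}{dt}\Big|_0 Q_{g(t)}(u) = \int (n-2)\psi\,|du|^2_g\,dv_g - \lambda_k\int n\psi\, u^2\,dv_g .
\end{equation*}

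The second step is to combine these into the variation of $\bar\lambda_k$. On $E_k$ we have $\lambda_k(M^n,g(t))$ one-sided differentiable with derivatives given by the max/min over $u\in E_k$, $\int u^2\,dv_g = 1$, of $-\int\big((n-2)\psi|du|^2_g - n\lambda_k\psi u^2\big)dv_g$. Normalizing the volume contributes, via the product rule, an extra term $\lambda_k\cdot\tfrac{2}{n}\vol^{\frac{2-n}{n}}\int n\psi\,dv_g = 2\lambda_k\vol^{\frac{2-n}{n}}\int\psi\,dv_g$ (after rescaling so that $\vol(M^n,g)=1$, which we may assume). So the condition ``$g$ is conformally extremal'' says: for every $\varphi = 2\psi$, the linear functional
\begin{equation*}
\ell_\varphi(u) = \int \psi\Big(2\lambda_k u^2 - (n-2)|du|^2_g + n\lambda_k u^2\Big)\,dv_g
\end{equation*}
— wait, let me keep it schematic — the family $\{\,-\int\psi\big((n-2)|du|^2_g - n\lambda_k u^2\big)dv_g + 2\lambda_k\int\psi\,dv_g : u\in E_k,\ \|u\|=1\,\}$ must not contain both strictly positive and strictly negative values for all $\psi$ unless $0$ is forced; the only way a max-type and min-type derivative can each have a consistent sign for \emph{every} $\psi$ is that the convex hull of the ``moment vectors'' associated to an $L^2$-orthonormal eigenbasis, intersected against the affine constraint coming from the volume term, contains the zero functional. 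Concretely: the set of measures $\{(n-2)|du|^2_g\,dv_g - n\lambda_k u^2\,dv_g : u\in E_k\}$ together with the fixed measure $2\lambda_k\,dv_g$ must be such that $0$ lies in the convex hull of $\{(n-2)|du|^2 - n\lambda_k u^2 + 2\lambda_k : u\in E_k\}\,dv_g$ as a functional on $C^\infty$. By a Hahn–Banach / convexity argument (this is exactly Nadirashvili's lemma, cf. the minimax and the finite-dimensionality of $E_k$), there is a convex combination, i.e. functions $u_1,\dots,u_m\in E_k$ (an orthogonal family after diagonalizing the quadratic form realizing the combination) and weights summing appropriately so that pointwise
\begin{equation*}
\sum_i\Big((n-2)|du_i|^2_g - n\lambda_k u_i^2 + 2\lambda_k\Big) = 0 ,
\end{equation*}
equivalently $(n-2)\sum_i|du_i|^2_g = n\lambda_k\sum_i u_i^2 - 2\lambda_k$.

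The third step is to massage this single pointwise identity into the two separate identities (1) $\sum u_i^2 = 1/\lambda_k$ and (2) $\sum|du_i|^2_g = 1$. Here I would integrate the displayed identity over $M^n$ and use $\int|du_i|^2 = \lambda_k\int u_i^2$ (eigenfunction) to see that $\int\sum u_i^2\,dv_g$ is forced, then use that the convexity argument actually can be run so that $\sum u_i^2$ is \emph{constant} — this is the standard trick: if $f := \sum u_i^2$ is not constant one deforms within $E_k$ (replacing the $u_i$ by a rotated family, or subtracting off a genuine eigenfunction) to strictly improve, contradicting extremality; once $f$ is constant, $f \equiv 1/\lambda_k$ by integrating, and then $(n-2)\sum|du_i|^2 = n - 2$, giving (2). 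Alternatively, and more in keeping with Nadirashvili's original argument, one applies the convexity/Hahn–Banach step to the \emph{pair} of functionals $\varphi\mapsto\int\varphi\sum u_i^2$ and $\varphi\mapsto\int\varphi\sum|du_i|^2$ directly, obtaining both constancy statements simultaneously; the normalizations $1/\lambda_k$ and $1$ then drop out of the two integral constraints $\int\sum u_i^2 = m'\!/\lambda_k$-type bookkeeping. The converse direction is the easier half: given $u_1,\dots,u_m$ with (1)–(2) and a spectral gap on one side, one plugs them back into the first-variation formula to check that for every conformal $\varphi$ the relevant one-sided derivative of $\bar\lambda_k$ vanishes to first order (the identities make the integrand of $\ell_\varphi$ identically zero), and the spectral gap guarantees that $\lambda_k(M^n,g(t))$ stays the $k$-th eigenvalue, so no index-crossing spoils the conclusion; hence $g$ is extremal in the sense of Definition~\ref{extremal:def}.

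\emph{Main obstacle.} The delicate point is the non-smoothness of $\lambda_k$ due to multiplicity: one must argue with one-sided derivatives and the max/min-over-the-eigenspace formula, and then convert ``no consistent sign for any deformation'' into a convex-combination (Hahn–Banach) statement yielding an \emph{orthogonal} family $u_i$ realizing the pointwise identity. Getting the right finite family — rather than just a measure-theoretic barycenter — and simultaneously pinning down that \emph{both} $\sum u_i^2$ and $\sum|du_i|^2_g$ are constant (not merely that one linear combination of them is) is the crux; this is where I would follow Nadirashvili's argument from~\cite{NadirashviliTorus} closely, adapting the conformal weights from $n=2$ (where $|du|^2dv$ is invariant and the $|du|^2$ term disappears) to $n\geqslant 3$ (where the factor $(n-2)$ survives and is what makes identity (2) nontrivial).
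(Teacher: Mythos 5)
Your overall strategy matches the paper's: compute the conformal first variation, use the multiplicity-aware Hadamard-type formula with one-sided derivatives, argue via Hahn--Banach/convexity that $0$ lies in the convex hull of the relevant ``moment functionals,'' and thereby produce a finite family $u_1,\dots,u_m\in E_k(g)$ satisfying a single pointwise identity of the form $\sum_j\bigl(\tfrac{2-n}{2}|\nabla u_j|^2 + \tfrac{n\lambda_k}{2}u_j^2\bigr) = 1$. This much is correct in spirit, modulo bookkeeping of constants (your normalization makes the final identity come out with a $2\lambda_k$ where the paper has a $2$; this is just a rescaling of the $u_i$'s and not a real issue). The paper also packages the one-sided-derivative step as a separate Lemma (their Lemma~3.3) which requires an approximation-by-smooth-$\phi_j$ argument to handle the $L^2$ perturbation; you gesture at this but take the Hadamard formula as standard, which is acceptable.

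The genuine gap is in your third step: extracting \emph{both} identities (1) and (2) from the single scalar pointwise relation. You propose to show $\sum u_i^2$ is constant by ``deforming within $E_k$ (replacing the $u_i$ by a rotated family, or subtracting off a genuine eigenfunction) to strictly improve,'' but this does not work: applying an orthogonal transformation to an $L^2$-orthonormal family in $E_k$ leaves both $\sum u_i^2$ and $\sum |du_i|_g^2$ invariant pointwise, and ``subtracting off an eigenfunction'' is not spelled out in a way one could check. Your ``alternatively'' suggestion --- run Hahn--Banach on a \emph{pair} of functionals at once --- would in principle work (and is essentially what the paper does later for the density case and the full-metric case), but you do not carry it out, and it requires setting up the correct affine constraint so that the pair $\bigl(\tfrac{1}{\lambda_k},1\bigr)$ lies in the convex hull; nothing in the proposal establishes that.

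The paper closes the gap with a short elliptic PDE observation that is both cleaner and decisive, and which you are missing. Having obtained $1 = \sum_j\bigl(\tfrac{2-n}{2}|\nabla u_j|^2 + \tfrac{n\lambda_k}{2}u_j^2\bigr)$, set $F = \sum_j u_j^2 - \tfrac{1}{\lambda_k}$. Using $\Delta(u_j^2) = 2\lambda_k u_j^2 - 2|\nabla u_j|^2$ and eliminating $\sum|\nabla u_j|^2$ via the Hahn--Banach identity, one finds
\begin{equation*}
(n-2)\,\Delta_g F = -4\lambda_k\, F .
\end{equation*}
Since $n\geqslant 3$, the coefficient $-\tfrac{4\lambda_k}{n-2}$ is negative, and as the (positive) Laplacian has no negative eigenvalues this forces $F\equiv 0$, i.e.\ identity (1); identity (2) then follows immediately by substituting back. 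This is the crux of the argument and the place where $n\geqslant 3$ is genuinely used (for $n=2$ the $|du|^2$-term vanishes and Nadirashvili's original argument gives constancy directly). Your ``Main obstacle'' paragraph correctly identifies the crux but does not resolve it; you should replace the ``rotated family / subtract an eigenfunction'' heuristic with this elliptic-equation step. The converse direction in your proposal is essentially correct and agrees with the paper.
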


\begin{proof}
    The proof follows Fraser-Schoen \cite{FS3} and El Soufi-Ilias \cite{ESI2} arguments. 

    For any smooth family $g(t) = e^{f(t)} g$ with $f(0) = 0$, we write $\lambda_k(M,g(t)) = \lambda_k(t)$.
    It is known that $\lambda_k(t)$ is Lipschitz. Furthermore for almost all $t$, when $\dot\lambda_k(t)$
    exists we have
    \begin{align*}
        \dot\lambda_k(t) = \int_M \left(\frac{n-2}{2} |\nabla u|^2 - \frac{n \lambda_k}{2} u^2\right) \dot{f}
    dv_g =: Q_{\dot{f}(t)}(u)
    \end{align*}
    where $u \in E_k(g(t))$. Indeed, for almost all $t_0$, we have a neighborhood around $t_0$ on which the 
    multiplicity of $\lambda_k(t)$ is constant and there exists $l$ with $\lambda_l(t) = \lambda_k(t)$,
    $\lambda_{l-1}(t) < \lambda_l(t)$. Fix such a $t_0$ and define on this neighborhood
    $\mathcal{E}(t) = \bigcup_{j = 0}^{l-1} E_j(g(t))$ and the orthogonal projection on $\mathcal{E}(t)$,
    $P_t : L^2(M) \to \mathcal{E}(t)$. Take $u_0 \in E_k(g(t_0))$ with $\|u_0\|_{L^2(M,g(t_0))} = 1$
    and let $u_t = u_0 - P_t(u_0)$. Then the functional 
    \begin{align*}
        F(t) = \int_M |\nabla u_t|^2 dv_{g(t)} - \lambda_l(t) \int_M u_t^2 dv_{g(t)}
    \end{align*}
    satisfies $F(t) \geqslant 0$ and the neighborhood of $t_0$ and $F(t_0) = 0$ so $\dot{F}(t_0) = 0$.
    Computing $\dot{F}(t_0)$ gives the desired formula for $\dot{\lambda}_k(t_0)$. 

    To continue the proof we need a lemma stating that for any perturbation of an extremal metrics we can find
    eigenfunctions for which $Q$ is null, similar to lemma 2.3 in Fraser-Schoen \cite{FS3}, but its proof
    is adapted to our definition of extremal metric. 
    \begin{lemma} \label{lem:null-direction}
        For any $\phi \in L^2(M)$ with $\int_M \phi dv_g = 0$, there exists $u \in E_k(g)$ with
        $\|u\| = 1$ such that $Q_\phi(u) = 0$. 
    \end{lemma}
    \begin{proof}
        We approximate $\phi \in L^2(M)$ by a sequence of $\phi_j \in C^\infty$ such that
        $\phi_j \to \phi$ and $\int_M \phi_j dv_g = 0$ for all $j$. Let $g_j(t) = \frac{(1 + t\phi_j)g}{\vol(M,(1+t\phi_j)g)}$. Then $\frac{d}{dt}g_j(0) = \phi_j g$ and $\vol(M,g_j(t)) = 1$. 

        Since $g$ is extremal for $\lambda_k$ and $\vol(M,g_j(t)) = 1$ we can assume without loss of generality that
        \begin{align*}
            \lambda_k(g_j(t)) \leqslant \lambda_k(g) + o(t)
        \end{align*}
        as $t \to 0$. In particular, taking the limit from the left gives
        \begin{align*}
            \lim_{t \to 0^-} \frac{\lambda_k(g_j(t)) - \lambda_k(g)}{t} \geqslant 0
        \end{align*}
        so there exists a sequence of $\epsilon_i > 0$ decreasing to 0 and $\delta_i$ with $\lim \delta_i \geqslant 0$
        such that
        \begin{align*}
            \delta_i &\leqslant \frac{\lambda_k(g_j(-\epsilon_i)) - \lambda_k(g)}{\epsilon_i} 
            = \frac{1}{\epsilon_i} \int_{-\epsilon_i}^0 \dot\lambda_k(g_j(t)) dt \\
                     &\leqslant \esssup_{t \in [-\epsilon_i, 0]} \dot\lambda_k(g_j(t)).
        \end{align*}
        Hence we can find a sequence of $t_i < 0$ increasing to $0$ such that
        $\dot\lambda_k(g_j(t_i)$ exists and $\exists u_i^{(j)} \in E_k(g_j(t_i)), \|u_i^{(j)}\|_{L^2(g_j(t_i))} = 1$
        with $Q_{\phi_j} (u_i^{(j)}) \geqslant \delta_i$.
        Then, after taking a subsequence if necessary, $u_i^{(j)} \to u_{-}^{(j)}$ in $C^2(g)$
        with $u_{-}^{(j)} \in E_k(g), \|u_{-}^{(j)}\|_{L^2(g)} = 1$ and $Q_{\phi_j} (u_{-}^{(j)}) \geqslant 0$.
        Again taking a subsequence if necessary, we have $u_{-}^{(j)} \to u_{-}$ in $C^2(g)$ with
        $u_{-} \in E_k(g), \|u_{-}\|_{L^2(g)} = 1$ and $Q_\phi (u_{-}) \geqslant 0$. 

        The same process starting with the limit from the right gives $u_{+} \in E_k(g)$
        with $\|u_{+}\|_{L^2(g)} = 1$ and $Q_\phi(u_{+}) \leqslant 0$. Then taking a linear combination
        of $u_{+}$ and $u_{-}$ gives the desired $u$. 
    \end{proof}

    Let $K$ be the convex hull in $L^2(M,g)$ of $\left\{ \frac{2 - n}{2} |\nabla u|^2 + \frac{n\lambda_k(g)}{2} u^2
    \,\big|\, u \in E_k(g)\right\}\!$. Then $1 \in K$. If not, then by Hahn-Banach's theorem, there exists
    $\phi \in L^2(M,g)$ such that
    \begin{align*}
        0 &< \int_M \left(\frac{2 - n}{2}|\nabla u|^2 + \frac{n \lambda_k(g)}{2} u^2\right)\phi dv_g= - Q_\phi(u)
        \quad \forall u \in E_k(g) \\
        0 &> \int_M \phi dv_g.
    \end{align*}
   Taking $\tilde{\phi} = \phi - \frac{1}{\vol(M,g)} \int_M \phi$ we obtain a perturbation satisfying the
   conditions of the previous lemma so there exists $u \in E_k(g)$ such that
   \begin{align*}
       0 &= Q_{\tilde{\phi}}(u) = Q_\phi(u) - \frac{\int_M \phi dv_g}{\vol(M,g)} \int_M \frac{n-2}{2}|\nabla u|^2
       - \frac{n \lambda_k}{2} u^2 dv_g \\
         &= Q_{\phi}(u) + \frac{\int_M \phi dv_g}{\vol(M,g)} \int_M |\nabla u|^2 dv_g \\
         &<0
   \end{align*}
   a contradiction. Hence $1 \in K$ and there exists $u_1, \dots, u_m \in E_k(g)$ such that
   \begin{align} \label{eq:laplace-conf-res}
       1 = \sum_{j = 1}^m \frac{2-n}{2} |\nabla u_j|^2 + \frac{n \lambda_k(g)}{2} u_j^2.
   \end{align}
   This implies that $\sum_{j = 1}^m \lambda_k(g) u_j^2 = 1$ on $M$ by considering 
   the function $F = \sum_{j = 1}^m u_j^2 - \frac{1}{\lambda_k}$
   and remarking that $(n-2)\Delta F = -4\lambda_k F$ hence $F = 0$. 
   The second equality of the theorem then follows directly.

   We now prove the converse. Suppose 
   $\lambda_k > \lambda_{k-1}$ (the case $\lambda_k < \lambda_{k+1}$ is similar) and there exists eigenfunctions
   $u_1, \dots, u_m \in E_k(g)$ satisfying the conditions $1)-2)$. Let $g(t) = e^{f(t)}g$ be a smooth family of
   metrics on $M$.  We can assume without loss of generality that $g(t)$ keeps the normalisation 
   $\vol(M,g)^{\frac{2}{n}}$ constant, thus  $\int_M f dv_g = 0$. Write $E = \Span \{u_1, \dots, u_m\}$ then
   \begin{align*}
       \sum_{j = 1}^m  Q_{\dot{f}}(u_j)
           &= \int_M \left(\frac{n-2}{2} \sum_{j = 1}^m |\nabla u_j|^2 
           - \frac{n \lambda_k}{2} \sum_{j = 1}^m u_j^2\right) \dot{f} dv_g \\
           &= \int_M \left(\frac{n-2}{2} - \frac{n}{2}\right) \dot{f} dv_g = 0,
   \end{align*}
   so there exists $u_\pm \in E$ with $\pm Q_{\dot{f}} (u_\pm) \leqslant 0$. 

   Since $\lambda_k(g) > \lambda_{k-1}(g)$, $Q_{\dot{f}} (u_+)\leqslant 0$ implies that
   $\lim_{t \to 0^+} \frac{\lambda_k(g(t)) - \lambda_k(g)}{t} \leqslant 0$, while
   $Q_{\dot{f}}(u_-) \leqslant 0$ gives $\lim_{t \to 0^-} \frac{\lambda_k(g(t)) - \lambda_k(g)}{t} \geqslant 0$
   hence $\lambda_k(g(t)) \leqslant \lambda_k(g) + o(t)$. The family $g(t)$ was arbitrary so we
   conclude that $g$ is extremal. 
\end{proof}

\begin{theorem}
\label{Laplacianrho:thm}
Let $M^n$ be an $n$-dimensional closed manifold, $n\geqslant 3$ and let $\mC$ be a conformal class of metrics on $M^n$. Suppose that the pair $(g,\rho)\in \mC\times C^\infty(M)$ is extremal for the functional
$$
\bar\lambda_k(M^n,g,\rho) = \lambda_k(M^n,g,\rho)\vol(M^n,g)^\frac{2-n}{n}\left(\int_{M^n}\rho\,dv_g\right)
$$
in $\mC\times C^\infty(M^n)$. Then $\rho$ is a constant function $\rho\equiv \rho_0$ and 
there exists a collection $u_1,\ldots, u_m$ of $\lambda_k(M^n,g,\rho)$-eigenfunctions such that 
\begin{itemize}
    \item[1)] $\sum_{i=1}^m u_i^2 = \frac{1}{\lambda_k \int_M \rho dv_g}$
    \item[2)] $\sum_{i=1}^m |d u_i|^2_g = \frac{1}{\vol(M,g)}$
\end{itemize}
Conversely, if $\rho\equiv\rho_0$ is a constant function and 
there exists collection $\lambda_k(M^n,g,\rho)$-eigenfunctions satisfying $1)-2)$ and additionally $\lambda_k(M^n,g,\rho)>\lambda_{k-1}(M^n,g,\rho)$ or $\lambda_k(M^n,g,\rho)<\lambda_{k+1}(M^n,g,\rho)$, then $(g,\rho)$ is extremal for the functional $\bar\lambda_k(M^n,g,\rho)$ in $\mC\times C^\infty(M^n)$.
\end{theorem}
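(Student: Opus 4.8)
The plan is to extend the argument proving Theorem~\ref{Laplacianconfalg:thm} to the enlarged space $\mC\times C^\infty(M^n)$, tracking the extra freedom to vary the density. Given a smooth one-parameter family $(g(t),\rho(t))$ with $g(0)=g\in\mC$ and $\rho(0)=\rho$, I would first reparametrise it — replacing $g(t)$ by $c(t)g(t)$ and $\rho(t)$ by $d(t)\rho(t)$ for suitable smooth $c,d>0$ with $c(0)=d(0)=1$ — so that $\vol(M^n,g(t))$ and $\int_{M^n}\rho(t)\,dv_{g(t)}$ are both constant in $t$. Since $\bar\lambda_k(cg,d\rho)=\bar\lambda_k(g,\rho)$ for constants $c,d>0$, this changes neither the values of $\bar\lambda_k$ along the family nor the extremality condition, and along such families $\bar\lambda_k(g(t),\rho(t))$ is a fixed positive multiple of $\lambda_k(M^n,g(t),\rho(t))$. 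The first-order data of such a family is a pair $(\phi,\psi)$ with $\dot g(0)=\phi g$ and $\dot\rho(0)=\psi$, subject to $\int_{M^n}\phi\,dv_g=0$ and $\int_{M^n}(\psi+\tfrac n2\phi\rho)\,dv_g=0$. The Rellich/orthogonal-projection argument used for Theorem~\ref{Laplacianconfalg:thm} applies with the obvious modifications (working in $L^2(\rho(t)\,dv_{g(t)})$) and yields, for a.e.\ $t$, that $\dot{\bar\lambda}_k(t)$ is a positive multiple of $Q^{(t)}(u_t)$ for some $u_t$ in the $\lambda_k(M^n,g(t),\rho(t))$-eigenspace, where at $t=0$
\begin{equation*}
Q_{\phi,\psi}(u)=\int_{M^n}\Big(\tfrac{n-2}{2}|du|_g^2-\tfrac{n\lambda_k}{2}\rho u^2\Big)\phi\,dv_g-\lambda_k\int_{M^n}u^2\psi\,dv_g .
\end{equation*}

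Next I would establish the density analogue of Lemma~\ref{lem:null-direction}: for every admissible $(\phi,\psi)$ there is $u$ in the $\lambda_k(M^n,g,\rho)$-eigenspace (henceforth denoted $E_k$) with $\int_{M^n}u^2\rho\,dv_g=1$ and $Q_{\phi,\psi}(u)=0$. The proof copies that of Lemma~\ref{lem:null-direction}: approximate $(\phi,\psi)$ in $L^2$ by smooth admissible pairs $(\phi_j,\psi_j)$, run normalised test families with first-order data $(\phi_j,\psi_j)$ (keeping the density positive and the two normalisations fixed for all $t$), use extremality of $\bar\lambda_k$ to extract one-sided bounds on the difference quotients, pass to $C^2$-limits of the corresponding eigenfunctions, and combine the limits from the two sides.

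The heart of the matter is the Hahn--Banach step, now carried out in $L^2(M^n)\times L^2(M^n)$. Writing $\Xi(u)=\big(\tfrac{n-2}{2}|du|_g^2-\tfrac{n\lambda_k}{2}\rho u^2,\ -\lambda_k u^2\big)$, so that $Q_{\phi,\psi}(u)=\langle(\phi,\psi),\Xi(u)\rangle$, let $K$ be the convex hull of $\{\Xi(u):u\in E_k,\ \int_{M^n}u^2\rho\,dv_g=1\}$, a compact set since $E_k$ is finite-dimensional. I would show that $K$ meets the two-dimensional subspace $A^{\perp}=\Span\{(1,0),\ (\tfrac n2\rho,1)\}$ orthogonal to all admissible perturbations; otherwise Hahn--Banach separation, together with the fact that $A^\perp$ is a subspace, produces an admissible $(\phi,\psi)$ with $Q_{\phi,\psi}(u)<0$ for all normalised $u\in E_k$, contradicting the null-direction lemma. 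From a point of $K\cap A^\perp$, absorbing the convex weights into the eigenfunctions, I obtain $v_1,\dots,v_m\in E_k$ with
\begin{equation*}
\sum_{i=1}^m v_i^2\equiv\mathrm{const}\qquad\text{and}\qquad \sum_{i=1}^m|dv_i|_g^2\equiv\mathrm{const} .
\end{equation*}
The phenomenon absent from Theorem~\ref{Laplacianconfalg:thm} is that the second slot of $\Xi$ — present precisely because $\rho$ is allowed to vary — forces $\sum_i v_i^2$ to be \emph{literally} constant, not merely constant modulo the kernel of a Schr\"odinger operator. Since $\Delta_g v_i=\lambda_k\rho v_i$ one has $\Delta_g\big(\sum_i v_i^2\big)=2\lambda_k\rho\sum_i v_i^2-2\sum_i|dv_i|_g^2$, whose left-hand side vanishes; as $\sum_i v_i^2$ is a positive constant (its integral against $\rho\,dv_g$ equals $1$), this forces $\rho\equiv\rho_0$ to be constant. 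A short computation then matches the two constants above to the normalisations in $1)$ and $2)$ after rescaling the $v_i$.

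For the converse I would run these steps backwards. Assuming $\rho\equiv\rho_0$ and a collection of $\lambda_k(M^n,g,\rho)$-eigenfunctions satisfying $1)$--$2)$, a direct computation using the constancy of $\rho$ gives $\sum_i Q_{\phi,\psi}(u_i)=0$ for every admissible $(\phi,\psi)$; hence some nonzero $u_\pm$ in their span satisfy $\pm Q_{\phi,\psi}(u_\pm)\leqslant 0$, and together with the spectral-gap hypothesis ($\lambda_k>\lambda_{k-1}$ or $\lambda_k<\lambda_{k+1}$) and the first-variation formula this yields $\bar\lambda_k(g(t),\rho(t))\leqslant\bar\lambda_k(g,\rho)+o(t)$ (or the reverse inequality) along an arbitrary family, i.e.\ $(g,\rho)$ is extremal in $\mC\times C^\infty(M^n)$. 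I expect the main obstacle to be bookkeeping rather than conceptual: keeping the two normalisations consistent so that the one-sided derivative estimates genuinely control $\bar\lambda_k$ (and not just $\lambda_k$), and verifying that the approximating families in the null-direction lemma can be arranged to satisfy them exactly. The conceptual input is simply that varying $\rho$ contributes exactly the one linear functional needed to upgrade the Schr\"odinger relation of the density-free case to the pointwise identity $\sum_i v_i^2\equiv\mathrm{const}$ — which is what forces $\rho$ to be constant.
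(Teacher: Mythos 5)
Your proposal is correct and follows essentially the same route as the paper: the same first-variation formula $Q_{(\phi,\psi)}$ in $L^2(M)\times L^2(M)$, the same null-direction lemma, the same Hahn--Banach separation argument, and the same Bochner identity $\Delta_g(\sum v_i^2)=2\lambda_k\rho\sum v_i^2-2\sum|dv_i|^2_g$ used to conclude that $\rho$ is constant once $\sum v_i^2$ and $\sum|dv_i|^2$ are individually constant. The only cosmetic difference is in how you set up the admissible-perturbation space: you fix $\vol(M,g(t))$ and $\int\rho(t)\,dv_{g(t)}$ separately (codimension two, so $A^\perp$ is two-dimensional), whereas the paper fixes only the single combination $\vol(M,g(t))^{(2-n)/n}\int\rho(t)\,dv_{g(t)}$ (codimension one, so the Hahn--Banach step lands on a specific one-dimensional ray). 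Both are valid because $\bar\lambda_k(cg,d\rho)=\bar\lambda_k(g,\rho)$ is invariant under the two-parameter rescaling, and your version produces the same eigenfunction identities after the rescaling you indicate. Your observation that the second slot of $\Xi$ — present only because $\rho$ varies — yields $\sum v_i^2\equiv\mathrm{const}$ directly, sidestepping the Schr\"odinger-operator trick of the no-density case, is accurate and is exactly how the paper's proof proceeds.
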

\begin{proof}
    The argument is the same than for the previous theorem, with slight modification to account
    for the density $\rho$. For $g(t) = e^{f(t)}g$, the derivative of $\lambda_k(g(t))$ (when
    it exists) is now given by
    \begin{align*}
        \dot{\lambda}_k(t) = \int_M \left(\frac{n-2}{2} |\nabla u|^2 - \frac{n}{2} \lambda_k(t) u^2 \rho\right) \dot{f}
        - \lambda_k(t) u^2 \dot{\rho} \,dv_{g(t)} =: Q_{(\dot{f}, \dot{\rho})} (u)
    \end{align*}
    where $u \in E_k(g(t))$ with $\|u\|_{L^2(M,\rho dv_{g(t)})} = \int_M u^2 \rho dv_{g(t)} = 1$. 

    We have a result similar to lemma \ref{lem:null-direction}: if $(\phi, \eta) \in L^2(M) \times
    L^2(M)$ with
    \begin{align*}
        \frac{2 - n}{2} \frac{1}{\vol(M, g)} \int_M \phi dv_g + \frac{1}{\int_M \rho dv_g} \int_M \eta + \frac{n}{2} \phi \rho dv_g = 0
    \end{align*}
    then there exists $u \in E_k(g)$ with $\|u\|_{L^2(M, \rho dv_g)} = 1$ such that $Q_{(\phi, \eta)}(u) = 1$.

    Letting $\mathcal{H} = L^2(M) \times L^2(M)$ with the inner product 
    $$\ip{(f_1, h_1),(f_2,h_2)} = \int_M f_1 f_2 dv_g  + \int_M h_1 h_2 dv_g$$
    and $K$ the convex hull of
    $$\left\{ \left( \frac{2 - n}{2} |\nabla u|^2 + \frac{n}{2} \lambda_k u^2 \rho,
    \lambda_k u^2\right) , u \in E_k(g,\rho) \right\}$$
    in $\mathcal{H}$, we have 
    $\left( \frac{2 - n}{2} \frac{1}{\vol(M,g)} + \frac{n}{2} \frac{1}{\int_M \rho dv_g} \rho,
    \frac{1}{\int_M \rho dv_g}\right) \in K$. If not we could use Hahn-Banach's theorem to obtain
    a contradiction. Hence there exists eigenfunctions $u_1, \dots, u_m$ such that
    \begin{align*}
        \begin{cases}
            \sum_{j = 1}^m \left( \frac{2 - n}{2} |\nabla u_j|^2 + \frac{n}{2} \lambda_k u_j^2 \rho\right)
            = \frac{2 - n}{2} \frac{1}{\vol(M,g)} + \frac{n}{2} \frac{1}{\int_M \rho dv_g} \rho \\
            \sum_{j = 1}^m \lambda_k u_j^2 = \frac{1}{\int_M \rho dv_g}.
        \end{cases}
    \end{align*}
    Using the second equation in the first one gives $\sum |\nabla u_j|^2 = \frac{1}{\vol(M,g)}$. 
    Finally, since $\sum u_j^2$ is constant on $M$, 
    \begin{align*}
        0 &= \Delta \Big(\sum_{j=1}^m u_j^2\Big) = 2 \sum_{j = 1}^2 u_j \Delta u_j + 2 \sum_{j = 1}^m |\nabla u_j|^2 \\
          &= -2 \sum_{j = 1}^m \rho u_j^2 + \frac{2}{\vol(M,g)} = - \frac{2 \rho}{\lambda_k(g) \vol(M,\rho)} + \frac{2}{\vol(M,g)}
    \end{align*}
    so $\rho$ is constant with $\rho = \frac{\lambda_k(g) \vol(M,\rho)}{\vol(M,g)}$ on $M$.

    The proof of the converse is the same than for the no-density case and is thus omitted.
\end{proof}
\begin{remark}
The theorem above essentially states that introducing density does not add any new critical points. In particular, it suggests that the optimal isoperimetric inequality should be the same regardless of the presence of density function. In fact when considering conformally maximal metrics we have the corollary
\end{remark}
\begin{corollary}
    The metric $g$ is $\bar\lambda_k$-conformally maximal if and only if $(g,1)$ is $\bar\lambda_k$-conformally maximal.
\end{corollary}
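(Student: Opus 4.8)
The statement reduces to the equality of the two ``conformal spectra''
\[
\Lambda^{\mathrm m}_k:=\sup_{g'\in\mC}\bar\lambda_k(M^n,g'),\qquad
\Lambda^{\mathrm p}_k:=\sup_{(g',\rho')\in\mC\times C^\infty_{>0}(M^n)}\bar\lambda_k(M^n,g',\rho').
\]
Since $\bar\lambda_k(M^n,g,1)=\bar\lambda_k(M^n,g)$ and metrics are exactly the pairs with $\rho'\equiv 1$, one always has $\bar\lambda_k(M^n,g)\le\Lambda^{\mathrm m}_k\le\Lambda^{\mathrm p}_k$; hence for every $g$ the conditions ``$\bar\lambda_k(M^n,g)=\Lambda^{\mathrm m}_k$'' and ``$\bar\lambda_k(M^n,g)=\Lambda^{\mathrm p}_k$'' are equivalent precisely when $\Lambda^{\mathrm m}_k=\Lambda^{\mathrm p}_k$. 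Thus the whole content of the corollary is the inequality $\Lambda^{\mathrm p}_k\le\Lambda^{\mathrm m}_k$, the reverse one being trivial.

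To prove $\Lambda^{\mathrm p}_k\le\Lambda^{\mathrm m}_k$ I would argue as follows. Assume first that $\Lambda^{\mathrm p}_k$ is attained by a smooth pair $(g^*,\rho^*)$ — the situation guaranteed by the existence and regularity results available in dimension two and in the higher-dimensional cases quoted above. Being a maximum, $(g^*,\rho^*)$ is $\bar\lambda_k$-conformally extremal in the sense of Definition~\ref{extremal:def}: for any family $(g^*(t),\rho^*(t))$ with $(g^*(0),\rho^*(0))=(g^*,\rho^*)$ one trivially has $\bar\lambda_k(M^n,g^*(t),\rho^*(t))\le\bar\lambda_k(M^n,g^*,\rho^*)+o(t)$. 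So Theorem~\ref{Laplacianrho:thm} applies and forces $\rho^*$ to be a constant $\rho_0$. Using that $\bar\lambda_k(M^n,g',c\rho')=\bar\lambda_k(M^n,g',\rho')$ for every $c>0$, we conclude
\[
\Lambda^{\mathrm p}_k=\bar\lambda_k(M^n,g^*,\rho_0)=\bar\lambda_k(M^n,g^*,1)=\bar\lambda_k(M^n,g^*)\le\Lambda^{\mathrm m}_k,
\]
which finishes the proof, and with it the corollary.

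When existence and regularity of maximizing pairs is not available a priori, one has instead to show directly that $\bar\lambda_k(M^n,g',\rho')\le\Lambda^{\mathrm m}_k$ for every fixed pair $(g',\rho')$. The natural plan is approximation: after rescaling $\rho'$ (which does not change $\bar\lambda_k(M^n,g',\rho')$), construct smooth conformal metrics $g_\varepsilon=\phi_\varepsilon^2 g'\in\mC$ that concentrate the excess volume of $\rho'$ onto shrinking bubbles, so that $\phi_\varepsilon^{n}\,dv_{g'}\rightharpoonup\rho'\,dv_{g'}$ while $\phi_\varepsilon^{n-2}\,dv_{g'}\rightharpoonup dv_{g'}$; for fixed smooth test functions the Rayleigh quotients of $g_\varepsilon$ then converge to those of the weighted problem. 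The step I expect to be the main obstacle is the spectral lower semicontinuity along this degeneration: one must rule out eigenfunctions escaping into the bubbles and necks, which would make $\lambda_k(M^n,g_\varepsilon)$ drop below $\lambda_k(M^n,g',\rho')$, while keeping careful track of the normalizing volume factor. This is of the same nature as the metric--measure arguments behind the upper bounds of~\cite{Korevaar, GNY, Has}.
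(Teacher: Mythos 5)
Your proof takes a genuinely different route from the paper's. You reformulate the corollary as the equality of two suprema,
\[
\Lambda^{\mathrm m}_k=\sup_{g'\in\mC}\bar\lambda_k(g')\quad\text{and}\quad\Lambda^{\mathrm p}_k=\sup_{(g',\rho')}\bar\lambda_k(g',\rho'),
\]
and then attack the non-trivial inequality $\Lambda^{\mathrm p}_k\leqslant\Lambda^{\mathrm m}_k$ by invoking a hypothetical smooth maximizing pair $(g^*,\rho^*)$, observing it is extremal, applying Theorem~\ref{Laplacianrho:thm} to conclude $\rho^*$ is constant, and using the scale invariance $\bar\lambda_k(g',c\rho')=\bar\lambda_k(g',\rho')$. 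The paper argues differently: it never needs to know that a maximizing pair for the weighted problem exists. It works entirely with the given metric $g$: from conformal maximality of $g$ (hence extremality), Theorem~\ref{Laplacianconfalg:thm} produces eigenfunctions $u_1,\dots,u_m$ satisfying $\sum u_j^2=1/\lambda_k$ and $\sum|du_j|^2=1$; after rescaling, these same eigenfunctions satisfy the algebraic conditions of Theorem~\ref{Laplacianrho:thm} with $\rho\equiv 1$; finally, simplicity of the conformal spectrum (from~\cite{CES}) gives the gap $\lambda_k>\lambda_{k-1}$, so the converse direction of Theorem~\ref{Laplacianrho:thm} yields the conclusion for $(g,1)$.

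The concrete gap in your proposal is exactly the one you flag: the entire first branch of the argument hinges on existence of a smooth maximizing pair $(g^*,\rho^*)$ for $\bar\lambda_k$ over $\mC\times C^\infty_{>0}(M^n)$. Theorem~\ref{Laplacianrho:thm} operates in dimension $n\geqslant 3$, where no such existence/regularity theorem is available; the results you allude to (Petrides, Nadirashvili--Sire, Karpukhin--Nadirashvili--Penskoi--Polterovich) are all two-dimensional. The fallback approximation argument you sketch (concentrating $\phi_\varepsilon^n\,dv_{g'}\rightharpoonup\rho'\,dv_{g'}$ while keeping $\phi_\varepsilon^{n-2}\,dv_{g'}\rightharpoonup dv_{g'}$) indeed runs into the lower-semicontinuity problem you name, and that step is not filled in, so the argument does not close. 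The paper's route is the better one precisely because it sidesteps any existence question for the weighted problem: it transfers the algebraic extremality data from $g$ to the pair $(g,1)$ and applies the converse statement of Theorem~\ref{Laplacianrho:thm}.
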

\begin{proof}
    That $(g,1)$ being $\bar\lambda_k$-conformally maximal implies that $g$ is $\bar\lambda_k$-conformally maximal is clear. For the
    other direction, if $g$ is $\bar\lambda_k$-conformally maximal then there exists $u_1, \dots, u_m$ $\lambda_k$-eigenfunctions
    satisfying:
    \begin{align*}
        \sum_{j = 1}^m u_j^2 = \frac{1}{\lambda_k(g)} \\
        \sum_{j = 1}^m |du_j|^2 = 1.
    \end{align*}
    Then after rescaling the $u_j$'s,
    \begin{align*}
        \sum_{j = 1}^m u_j^2 = \frac{1}{\lambda_k(g) \int_M \rho dv_g} \\
        \sum_{j = 1}^m |du_j|^2 = \frac{1}{\vol(M,g)}
    \end{align*}
    where $\rho = 1$. 

    Finally since $g$ is $\bar\lambda_k$-conformally maximal and the conformal spectrum is simple \cite{CES}, we have $\lambda_k(g) > \lambda_{k-1}(g)$.
    So by the converse of theorem \ref{Laplacianrho:thm}, $(g,1)$ is also $\bar\lambda_k$-conformally maximal.
\end{proof}

\begin{theorem}
\label{confSteklov:thm}
Let $\Sigma^n$ be an $n$-dimensional connected compact manifold with non-empty boundary, $n\geqslant 3$ and let $\mC$ be a conformal class of metrics on $\Sigma^n$. Let $\alpha \in \mathbb{R}$ and suppose that the metric $g\in \mC$ is extremal for the functional 
$$
F_{k,\alpha}(\Sigma^n,g) = \sigma_k(\Sigma^n,g)\vol(\partial \Sigma^n,g)^\alpha\vol(\Sigma^n,g)^{\frac{1+\alpha(1 - n)}{n}}
$$
in $\mC$.
Then $\alpha =1$, i.e. $F_{k,\alpha} = \bar\sigma_k$, and  there exists a collection $u_1,\ldots, u_m$ of $\sigma_k(\Sigma^n,g)$-eigenfunctions such that 
\begin{itemize}
    \item[1)] $\sum_{i=1}^m u_i^2 = \frac{1}{\sigma_k(g) \vol(\del \Sigma, g)}$ on $\partial \Sigma^n$;
    \item[2)] $\sum_{i=1}^m |d u_i|^2_g = \frac{1}{\vol(\Sigma, g)}$ on $\Sigma^n$.
\end{itemize}
Conversely, if there exists a collection of $\sigma_k(\Sigma^n,g)$-eigenfunctions satisfying $1)-2)$ 
and additionally $\sigma_k(\Sigma^n,g)>\sigma_{k-1}(\Sigma^n,g)$ or $\sigma_k(\Sigma^n,g)<\sigma_{k+1}(\Sigma^n,g)$, then $g$ is extremal for the functional $\bar\sigma_k(\Sigma^n,g) = F_{k,1}(\Sigma^n,g)$ in $\mC$. 
\end{theorem}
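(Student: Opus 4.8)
The approach is to adapt the proofs of Theorems~\ref{Laplacianconfalg:thm} and~\ref{Laplacianrho:thm} to the Steklov setting, the two new ingredients being the conformal first-variation formula for Steklov eigenvalues and a Green-identity computation that pins down $\alpha$. I would start from the formula: along a smooth conformal family $g(t)=e^{f(t)}g$ with $f(0)=0$, at each $t$ where $\dot\sigma_k(t)$ exists,
$$
\dot\sigma_k(t)=\int_{\Sigma^n}\frac{n-2}{2}\,\dot f\,|du|_g^2\,dv_g-\sigma_k(t)\int_{\partial\Sigma^n}\frac{n-1}{2}\,\dot f\,u^2\,dA_g,
$$
where $u\in E_k(g(t))$ is normalized by $\int_{\partial\Sigma^n}u^2\,dA_{g(t)}=1$; this is obtained by differentiating $\sigma_k(t)=\int_{\Sigma^n}|du_t|_{g(t)}^2\,dv_{g(t)}$ and, exactly as for the Laplacian, noting that the contribution of the varying harmonic extension $\dot u$ is cancelled by the normalization constraint. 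Together with the conformal variations of the boundary and total volume, $\tfrac{n-1}{2}\int_{\partial\Sigma^n}\dot f\,dA_g$ and $\tfrac n2\int_{\Sigma^n}\dot f\,dv_g$, and writing $\beta=\tfrac{1+\alpha(1-n)}{n}$ for the $\vol(\Sigma^n,g)$-exponent in $F_{k,\alpha}$, one checks that the first variation of $\log F_{k,\alpha}(g(t))$ at $t=0$ equals the pairing, in $\mathcal{H}:=L^2(\Sigma^n,dv_g)\oplus L^2(\partial\Sigma^n,dA_g)$, of $(\dot f,\dot f|_{\partial\Sigma^n})$ with
$$
\left(\frac{n-2}{2\sigma_k}|du|_g^2+\frac{\beta n}{2\vol(\Sigma^n,g)},\ -\frac{n-1}{2}u^2+\frac{\alpha(n-1)}{2\vol(\partial\Sigma^n,g)}\right).
$$

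Next I would prove the Steklov analogue of Lemma~\ref{lem:null-direction}: if $g$ is $F_{k,\alpha}$-conformally extremal, then for every $\dot f\in C^\infty(\Sigma^n)$ there exists $u\in E_k(g)$ of unit boundary $L^2$-norm killing the pairing above. Its proof copies that of Lemma~\ref{lem:null-direction}: approximate $\dot f$ by smooth functions, use the one-sided extremality inequality together with the Lipschitz continuity and a.e.\ differentiability of $t\mapsto F_{k,\alpha}(g(t))$ to extract from the left and right difference quotients eigenfunctions $u_\pm\in E_k(g)$ on which the pairing is $\geq0$, resp.\ $\leq0$, and take a convex combination along the connected unit sphere of $E_k(g)$; no volume-preserving normalization of the family is needed since $F_{k,\alpha}$ is scale invariant. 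With this in hand, I would argue as in the proof of Theorem~\ref{Laplacianrho:thm}, applying Hahn--Banach in $\mathcal{H}$: were the point $-\bigl(\tfrac{\beta n}{2\vol(\Sigma^n,g)},\,\tfrac{\alpha(n-1)}{2\vol(\partial\Sigma^n,g)}\bigr)$ not in the convex cone $K=\mathrm{conv}\{(\tfrac{n-2}{2\sigma_k}|du|_g^2,\,-\tfrac{n-1}{2}u^2):u\in E_k(g)\}$ — which is finite-dimensional (it lies in the span of the pairwise products of a basis of $E_k(g)$) and closed (its generators with unit boundary norm all have $\int_{\Sigma^n}$-integral of the first coordinate equal to $\tfrac{n-2}{2}\neq0$) — a separating functional would contradict the lemma. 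Here one uses that the admissible perturbations $\{(\dot f,\dot f|_{\partial\Sigma^n}):\dot f\in C^\infty(\Sigma^n)\}$ are dense in $\mathcal{H}$ — given a target pair, a cutoff in a thin collar of $\partial\Sigma^n$ prescribes the boundary trace while changing the interior $L^2$-norm by as little as one wishes — so that, although we deform a single function $f$ on $\Sigma^n$, the interior and boundary variations decouple, just as the two independent variables did in Theorem~\ref{Laplacianrho:thm}. Absorbing the convex weights into the eigenfunctions yields $v_1,\dots,v_m\in E_k(g)$ with $\sum_i|dv_i|_g^2=\tfrac{-\beta n\sigma_k}{(n-2)\vol(\Sigma^n,g)}$ on $\Sigma^n$ and $\sum_i v_i^2=\tfrac{\alpha}{\vol(\partial\Sigma^n,g)}$ on $\partial\Sigma^n$.

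The value $\alpha=1$ is then forced by a Green identity for $w:=\sum_i v_i^2$, with the $v_i$ taken as their $g$-harmonic extensions. Since the $v_i$ are harmonic, $\Delta_g w=-2\sum_i|dv_i|_g^2$ is the constant $\tfrac{2\beta n\sigma_k}{(n-2)\vol(\Sigma^n,g)}$, whereas $\partial_{\nu_g}w=2\sigma_k w=\tfrac{2\sigma_k\alpha}{\vol(\partial\Sigma^n,g)}$ on $\partial\Sigma^n$ by the eigenvalue equation; integrating $\int_{\Sigma^n}\Delta_g w\,dv_g=-\int_{\partial\Sigma^n}\partial_{\nu_g}w\,dA_g$ gives $\beta=\tfrac{\alpha(2-n)}{n}$, which together with $\beta=\tfrac{1+\alpha(1-n)}{n}$ forces $\alpha=1$, hence $\beta=\tfrac{2-n}{n}$ and $F_{k,\alpha}=\bar\sigma_k$. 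Substituting $\alpha=1$ and rescaling $u_i:=\sigma_k^{-1/2}v_i$ turns the two displayed identities into exactly conditions 1) and 2). For the converse, assume 1)--2) and, say, $\sigma_k>\sigma_{k-1}$; set $E=\Span\{u_i\}$ and let $\widetilde Q_{\dot f}$ be the second-order form on $E$ which, combined with the eigenfunction-independent volume terms, controls the one-sided derivatives of $t\mapsto\bar\sigma_k(g(t))$. Using 1)--2) and $\sum_i\|u_i\|_{L^2(\partial\Sigma^n,g)}^2=\sigma_k^{-1}$ one computes $\sum_i\widetilde Q_{\dot f}(u_i)=0$, so $\widetilde Q_{\dot f}$ is indefinite on $E$; picking $u_\pm\in E\subseteq E_k(g)$ with $\pm\widetilde Q_{\dot f}(u_\pm)\leq0$ makes the right and left derivatives of $\bar\sigma_k(g(t))$ at $t=0$ nonpositive and nonnegative respectively (here $\sigma_k>\sigma_{k-1}$ identifies these derivatives with a minimum and a maximum over $E_k(g)$), whence $\bar\sigma_k(g(t))\leq\bar\sigma_k(g)+o(t)$ on both sides and $g$ is $\bar\sigma_k$-conformally extremal; the case $\sigma_k<\sigma_{k+1}$ is symmetric.

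I expect the main obstacle to be the careful bookkeeping in the null-direction lemma and, above all, the decoupling of interior and boundary variations in the Hahn--Banach step: we deform a single function $f$ on $\Sigma^n$, yet must extract two separate algebraic identities, one on $\Sigma^n$ and one on $\partial\Sigma^n$. Once the eigenfunctions have been produced, the Green-identity step that yields $\alpha=1$ is short, but it is exactly the mechanism behind the assertion of Proposition~\ref{nocrit:prop} that no other normalization admits smooth critical metrics.
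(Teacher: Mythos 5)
Your proposal is correct and follows essentially the same route as the paper's proof: a first-variation formula along conformal families, a null-direction lemma extracted from one-sided extremality, a Hahn--Banach step in $L^2(\Sigma^n)\oplus L^2(\partial\Sigma^n)$ producing the pointwise identities, and an integration (your Green identity for $\sum v_i^2$ is literally the same computation as the paper's ``integrate the first equation and use $\int_\Sigma |\nabla u_j|^2 = \sigma_k\int_{\partial\Sigma}u_j^2$'') forcing $\alpha=1$. The only cosmetic differences are that you fold the volume contributions into a single pairing rather than keeping $\sigma_k$ and the volume terms separate with an integral constraint on the perturbation, you make the density of $\{(\dot f,\dot f|_{\partial\Sigma^n})\}$ in $\mathcal{H}$ explicit via a collar cutoff (the paper leaves this implicit in the approximation step of its Lemma~\ref{lem:null-direction-stek}), and you dispense with the paper's $\sgn_+(\alpha)$ bookkeeping by letting the Hahn--Banach sign constraints rule out the wrong-signed values of $\alpha$ directly; none of these affect the substance of the argument.
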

\begin{remark}
This theorem states that $\bar\sigma_k(\Sigma^n,g)$ is the only normalization of $\sigma_k(\Sigma^n,g)$ by powers of $\vol(\Sigma^n,g)$ and $\vol(\partial \Sigma^n,g)$, where one could expect the existence of regular maximizers in the conformal class.  
\end{remark}

\begin{proof}
    We use once again the same arguments.
    For $(\phi, \psi) \in L^2(\Sigma) \times L^2(\del \Sigma)$ and $u \in C^\infty (\Sigma)$ let 
    \begin{align*}
        Q_{(\phi,\psi)}(u) = -\int_\Sigma \left(1 - \frac{n}{2}\right) |\nabla u|^2 \phi - \int_{\del \Sigma} \frac{n-1}{2} \sigma_k(t) u^2 \psi.
    \end{align*}
    Then, if $g(t) = e^{f(t)}g$, the derivative of $\sigma_k(g(t))$ is given almost everywhere (when it exists) by
    \begin{align*}
        \dot\sigma_k(t) = Q_{(\dot{f},\dot{f})}(u)
    \end{align*}
    where $u \in E_k(g(t))$ with $\|u\|_{L^2(\del \Sigma)} = 1$. 

    Lemma \ref{lem:null-direction} can be adapted to the Steklov problem to obtain
    \begin{lemma} \label{lem:null-direction-stek}
        For any $(\phi, \psi) \in L^2(\Sigma) \times L^2(\del \Sigma)$ with
        \begin{align*}
            \frac{(n-1) \alpha}{\vol(\del \Sigma, g)} \int_{\del \Sigma} \psi + \frac{1 + \alpha(1-n)}{\vol(\Sigma,g)} \int_{\Sigma} \phi = 0
        \end{align*}
        there exists $u \in E_k(g)$, $\|u\|_{L^2(\del \Sigma)} = 1$ such that $Q_{(\phi,\psi)}(u) = 0$. 
    \end{lemma}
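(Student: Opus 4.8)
The plan is to adapt the proof of Lemma~\ref{lem:null-direction} to the Steklov functional $F_{k,\alpha}$. The one new feature is that the interior perturbation $\phi$ and the boundary perturbation $\psi$ are independent, whereas a conformal family $g(t)=e^{f(t)}g$ furnishes only a single function $f$, whose trace on $\partial\Sigma$ is then determined; this mismatch is resolved by a thin-collar interpolation, after which the Fraser--Schoen one-sided argument goes through essentially verbatim.

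First I would reduce to smooth data: approximate $(\phi,\psi)$ in $L^2(\Sigma)\times L^2(\partial\Sigma)$ by smooth pairs $(\phi_j,\psi_j)$. For each $j$ and each collar width $\epsilon>0$, choose $f_j^\epsilon\in C^\infty(\Sigma)$ equal to $\phi_j$ outside the $\epsilon$-collar $U_\epsilon$ of $\partial\Sigma$, equal to $\psi_j$ on $\partial\Sigma$, interpolating smoothly through $U_\epsilon$ with $\|f_j^\epsilon\|_\infty$ bounded independently of $\epsilon$. Write $N(h)=\vol(\partial\Sigma,h)^\alpha\vol(\Sigma,h)^{(1+\alpha(1-n))/n}$; after rescaling $g$ so that $N(g)=1$, consider the family $g_j^\epsilon(t)=N\big(e^{tf_j^\epsilon}g\big)^{-2}\,e^{tf_j^\epsilon}g$, which starts at $g$ and satisfies $N\equiv1$ along it. A short scaling computation shows $\sigma_k\big(g_j^\epsilon(t)\big)=F_{k,\alpha}\big(e^{tf_j^\epsilon}g\big)$, so extremality of $g$ applies to this function of $t$; and, writing $g_j^\epsilon(t)=e^{\omega_j^\epsilon(t)}g$, one has $\dot\omega_j^\epsilon(0)=f_j^\epsilon-c_j^\epsilon$ with $c_j^\epsilon=\tfrac{(n-1)\alpha}{\vol(\partial\Sigma,g)}\int_{\partial\Sigma}f_j^\epsilon+\tfrac{1+\alpha(1-n)}{\vol(\Sigma,g)}\int_\Sigma f_j^\epsilon$. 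Since $|U_\epsilon|\to0$, for any fixed smooth $u$ one has $\int_\Sigma|\nabla u|^2(f_j^\epsilon-\phi_j)\to0$ and $\int_\Sigma f_j^\epsilon\to\int_\Sigma\phi_j$ as $\epsilon\to0$, while $\int_{\partial\Sigma}f_j^\epsilon=\int_{\partial\Sigma}\psi_j$; hence $c_j^\epsilon\to0$ as $\epsilon\to0$ and $j\to\infty$ \emph{precisely because $(\phi,\psi)$ satisfies the hypothesis of the lemma}, and the variation formula for $\dot\sigma_k$ along $g_j^\epsilon(t)$, evaluated as $t\to0$, converges to $Q_{(\phi,\psi)}(u)$ along this iterated limit.

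With this family the remainder follows the proof of Lemma~\ref{lem:null-direction}. By extremality we may assume $F_{k,\alpha}(g_j^\epsilon(t))\le F_{k,\alpha}(g)+o(t)$ as $t\to0$. Letting $t\to0^-$ and using that $\sigma_k$ is Lipschitz along the family, one extracts --- exactly as in the proof of Theorem~\ref{Laplacianconfalg:thm}, where for almost every $t$ the derivative $\dot\sigma_k(t)$ exists --- times $t_i\uparrow0$ and eigenfunctions $u_i\in E_k(g_j^\epsilon(t_i))$, normalized in $L^2(\partial\Sigma)$, with $\dot\sigma_k(t_i)\ge\delta_i$, $\delta_i\to0$. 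Continuity of the Steklov spectrum and elliptic regularity yield a $C^2$-convergent subsequence $u_i\to u_-^{j,\epsilon}\in E_k(g)$ of unit $L^2(\partial\Sigma,g)$-norm; as $E_k(g)$ is finite-dimensional the subsequent limits in $\epsilon$ and $j$ are immediate, producing $u_-\in E_k(g)$ with $Q_{(\phi,\psi)}(u_-)\ge0$. Repeating with $t\to0^+$ gives $u_+\in E_k(g)$ with $Q_{(\phi,\psi)}(u_+)\le0$. Finally, $u\mapsto Q_{(\phi,\psi)}(u)$ is continuous on the unit sphere of $E_k(g)$ in $L^2(\partial\Sigma)$, which is connected once $\dim E_k(g)\ge2$, so it attains $0$; and if $\dim E_k(g)=1$ then $u_\pm$ are proportional, forcing $Q_{(\phi,\psi)}(u_\pm)=0$. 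Either way we obtain the desired $u$.

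The \emph{main obstacle} is the construction of the second paragraph: a single conformal factor cannot realize the decoupled pair $(\phi,\psi)$, so one interpolates through a thin collar and must then check both that the collar contributes negligibly to $Q$ and that the normalizing constant $c_j^\epsilon$ vanishes in the limit --- the latter being exactly where the linear constraint on $(\phi,\psi)$ is used. Once this bookkeeping is in place, together with the routine triple limit in $j$, $\epsilon$ and $t_i$, what remains is the by-now standard Fraser--Schoen perturbation argument.
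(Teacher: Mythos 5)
Your proposal is correct and takes essentially the same approach as the paper: approximate the pair $(\phi,\psi)$ by a single smooth conformal factor (via a collar interpolation, which the paper leaves implicit), normalize to preserve the volume product, and run the Fraser--Schoen one-sided perturbation argument to produce $u_\pm$ and hence $u$. The only cosmetic difference is that the paper's approximating sequence $f_i$ is adjusted to satisfy the linear constraint exactly so that $\dot g_i(0)=f_i g$ directly, whereas you carry along a normalizing constant $c_j^\epsilon$ and show it vanishes in the iterated limit; both variants are sound.
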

    \begin{proof}
        We can approximate $(\phi,\psi)$ by a sequence of $f_i \in C^\infty(M)$ such that
        \begin{align} 
            \frac{(n-1) \alpha}{\vol(\del \Sigma, g)} \int_{\del \Sigma} f_i + \frac{1 + \alpha(1-n)}{\vol(\Sigma,g)} \int_{\Sigma} f_i = 0
        \end{align}
        and $(f_i, f_i) \to (\phi,\psi)$ in $L^2$. Let $g_i(t) = r(t) (1 + tf_i) g$ where $r(t)$ is such that
        $\vol(\del \Sigma, g_i(t))^\alpha \vol(\Sigma, g_i(t))^{\frac{1 + \alpha(1 - n)}{n}} = 1$. Then en by construction of the 
        $f_i$, we have $\frac{d}{dt} g_i(t) = f_i g$.

        Without loss of generality, since $g$ is $F_{k,\alpha}$-conformally extremal and the volumes normalisation
        is kept constant, we can assume that $\sigma_k(g(t)) \leqslant \sigma_k(g) + o(t)$ as $t \to 0$. As in lemma \ref{lem:null-direction}, the limit from the
        left gives $u_{-} \in E_k(g)$ with $\|u_{-}\|_{L^2(\del \Sigma)} = 1$ and $Q_{(\phi,\psi)}(u_{-}) \geqslant 0$, while the limit from the right gives
        $u_{+} \in E_k(g)$ with $\|u_{+}\|_{L^2(\del \Sigma)} = 1$ and $Q_{(\phi, \psi)}(u_{+}) \leqslant 0$. Taking a linear combination of $u_{-}$ and $u_{+}$
        gives the desired $u$. 
    \end{proof}

    Let $K$ be the convex hull in $\mathcal{H} = L^2(\Sigma) \times L^2(\del \Sigma)$ of pairs of functions $\{(\frac{2 - n}{2}|\nabla u|^2,\frac{(n-1)\sigma_k}{2} u^2) \,\big|\, u \in E_k(g)\}$. We claim
    that 
    $$
    \left(\sgn_+(\alpha) \frac{1  + \alpha(1 -n)}{\vol(\Sigma)}, \sgn_+(\alpha) \frac{(n-1) \alpha}{\vol(\del \Sigma)}\right)\in K,
    $$
      where $\sgn_+(\alpha) = +1$ if $\alpha \geqslant 0$, $=-1$ otherwise. If not by Hahn-Banach theorem,
    there exists $(\phi,\psi)$ such that
    \begin{align*}
        \sgn_+(\alpha) \Big\langle (\phi,\psi), \left(\frac{1 + \alpha(1 - n)}{\vol(\Sigma)}, \frac{(n-1)\alpha}{\vol(\del \Sigma)}\right)\Big\rangle_{\mathcal{H}}  &> 0 \\
        \Big\langle(\phi,\psi), \left((1-\frac{n}{2}) |\nabla u|^2, \frac{(n-1)\sigma_k}{2} u^2\right)\Big\rangle_{\mathcal{H}} &< 0 \quad \forall u \in E_k(g) \setminus \{0\}.
    \end{align*}
    Setting  for $\alpha \neq 0$,
    \begin{align*}
        \tilde{\psi} = \psi - \frac{1}{\vol(\del \Sigma)} \int_{\del \Sigma} \psi - \frac{1 + \alpha(n-1)}{\alpha(n-1) \vol(\Sigma)} \int_{\Sigma} \phi
    \end{align*}
    we have that $(\phi, \tilde{\psi})$ satisfy the condition of lemma \ref{lem:null-direction-stek} and there exists $u \in E_k(g)$ such that
    \begin{align*}
        & 0 = Q_{(\phi, \tilde{\psi})}(u) = Q_{(\phi,\psi)}(u) -  \\
           & - \frac{(n-1)\sigma_k}{2|\alpha|(n-1)} \sgn_+(\alpha)
        \left\langle(\phi,\psi), \left(\frac{1 + \alpha(n-1)}{\vol(\Sigma)}, \frac{\alpha (n-1)}{\vol(\del \Sigma)}\right)\right\rangle
        \int_{\del \Sigma} u^2 
          < 0.
    \end{align*}
    The same contradiction is obtained for $\alpha = 0$ by instead considering $(\tilde{\phi}, \psi)$
    with
    \begin{align*}
        \tilde{\phi} = \phi - \frac{1}{\vol(\Sigma,g)} \int_\Sigma \phi.
    \end{align*}
    This proves the claim and we conclude that
    there exists $u_1, \dots, u_m \in E_k(g)$ such that
    \begin{align*}
        \begin{cases}
            \sum_{j = 1}^m \left(1 - \frac{n}{2}\right) |\nabla u_j|^2 = \sgn_+(\alpha) \frac{1 + \alpha (1 - n)}{\vol(\Sigma)} \\
            \sum_{j = 1}^m \frac{\sigma_k(g)}{2} u_j^2 = \sgn_+(\alpha) \frac{\alpha}{\vol(\del \Sigma)}
        \end{cases}
    \end{align*}
    Integrating the first equation on $\Sigma$ and using that the $u_j$ are eigenfunctions gives
    $1 + \alpha(1 - n) = \alpha (2-n)$.
    Solving this equation for $\alpha$ yields the necessary condition $\alpha = 1$. 
    The previous system becomes
    \begin{align*}
        \begin{cases}
            \sum_{j = 1}^m |\nabla u_j|^2 = \frac{2}{\vol(\Sigma)} \\
            \sum_{j = 1}^m u_j^2 = \frac{2}{\sigma_k(g) \vol(\del \Sigma)}
        \end{cases}
    \end{align*}
    which after rescaling the $u_j$'s are the desired result.

    For the converse
    we will only treat the case $\sigma_k(\Sigma,g) > \sigma_{k-1}(\Sigma,g)$, the other
    case being similar. Suppose that we have the $\sigma_k(g)$-eigenfunctions
    $u_1, \dots, u_m$ satisfying $1)-2)$. Let $g(t) = e^{f(t)}$ smooth family of metrics
    which keeps the normalisation $\vol(\del \Sigma, g) \vol(\Sigma, g)^{\frac{2 -n}{n}}$ constant,
    hence 
    \begin{align*}
        \frac{n-1}{\vol(\del \Sigma,g)} \int_{\del \Sigma} \dot{f} + \frac{2 - n}{\vol(\del \Sigma,g)} \int_\Sigma \dot{f} = 0.
    \end{align*}
    Let $E = \Span\{u_1, \dots, u_m\}$ then
    \begin{align*}
        \sum_{j = 1}^m Q_{(\dot{f}, \dot{f})}(u_j) 
        &= - \int_\Sigma \left(1 - \frac{n}{2}\right) \sum_{j = 1}^m |\nabla u_j|^2 \dot{f}
        - \int_{\del \Sigma} \frac{n-1}{2} \sum_{j = 1}^m \sigma_k u_j^2 \dot{f} \\
        &= - \frac{2-n}{2 \vol(\Sigma,g)} \int_\Sigma \dot{f} - \frac{n-1}{2 \vol(\del \Sigma, g)}
        \int_{\del \Sigma} \dot{f} \\
        &= 0.
    \end{align*}
    Then there exists $u_\pm \in E$ such that $\pm Q_{(\dot{f},\dot{f})}(u_\pm) \leqslant 0$. 
    The fact that $\lambda_k(g) > \lambda_{k-1}(g)$ allows us to get 
    $\lim_{t \to 0^+} \frac{\lambda_k(g(t)) - \lambda_k(g)}{t} \leqslant 0$ from $Q_{(\dot{f}, \dot{f})}(u_+)$,
    and we have $\lim_{t \to 0^-} \frac{\lambda_k(g(t)) - \lambda_k(g)}{t} \geqslant 0$ from $Q_{(\dot{f}, \dot{f})}(u_-)$.
    Thus $\lambda_k(g(t)) \leqslant \lambda_k(g) + o(t)$ and $g$ is extremal.
\end{proof}

\begin{theorem}
\label{Steklovrho:thm}
Let $\Sigma^n$ be an $n$-dimensional connected compact manifold with non-empty boundary, $n\geqslant 3$ and let $\mC$ be a conformal class of metrics on $M$. Suppose that the pair $(g,\rho)\in \mC\times C^\infty(\partial \Sigma^n)$ is extremal for the functional 
$$
\bar\sigma_k(\Sigma^n,g,\rho) = \sigma_k(\Sigma^n,g,\rho)\vol(\Sigma^n,g)^{\frac{2 - n}{n}}\left(\int_{\partial \Sigma^n}\rho\,dv_g\right)
$$
in $\mC\times C^\infty(\partial \Sigma^n)$.
Then there exists a collection $u_1,\ldots, u_m$ of $\sigma_k(\Sigma^n,g,\rho)$-eigenfunctions such that 
\begin{itemize}
    \item[1)] $\sum_{i=1}^m u_i^2 = \frac{1}{\sigma_k(g,\rho) \vol(\del \Sigma, \rho)}$ on $\partial \Sigma^n$;
    \item[2)] $\sum_{i=1}^m |d u_i|^2_g = \frac{1}{\vol(\Sigma, g)}$ on $\Sigma^n$.
\end{itemize}
Conversely, if there exists a collection of $\sigma_k(\Sigma^n,g,\rho)$-eigenfunctions satisfying $1)-2)$ and additionally $\sigma_k(\Sigma^n,g,\rho)>\sigma_{k-1}(\Sigma^n,g,\rho)$ or $\sigma_k(\Sigma^n,g,\rho)<\sigma_{k+1}(\Sigma^n,g,\rho)$, then $g$ is extremal for the functional $\bar\sigma_k(\Sigma^n,g,\rho)$ in $\mC\times C^\infty(\partial \Sigma^n)$. 
\end{theorem}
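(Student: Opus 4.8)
The plan is to follow the scheme of Theorems~\ref{Laplacianconfalg:thm} and~\ref{confSteklov:thm}, now also varying $\rho$. First I would record the first variation of the Steklov eigenvalue: for a smooth family $g(t)=e^{f(t)}g$ with $f(0)=0$ and smooth positive $\rho(t)$ with $\rho(0)=\rho$, the function $t\mapsto\sigma_k(\Sigma^n,g(t),\rho(t))$ is Lipschitz and, at a.e.\ $t$ where it is differentiable (points of locally constant multiplicity, handled exactly as in Theorem~\ref{Laplacianconfalg:thm} via orthogonal projection onto the lower eigenspaces), one has $\dot\sigma_k=Q_{(\dot f,\,\dot\rho)}(u)$ for any $u\in E_k$ normalized by $\int_{\partial\Sigma^n}\rho u^2\,dA_g=1$, where
$$
Q_{(\phi,\eta)}(u)=\frac{n-2}{2}\int_{\Sigma^n}|\nabla u|^2_g\,\phi\,dv_g-\frac{n-1}{2}\sigma_k\int_{\partial\Sigma^n}\rho u^2\,\phi\,dA_g-\sigma_k\int_{\partial\Sigma^n}u^2\,\eta\,dA_g .
$$
For the forward implication it is enough to test against conformal variations supported in the interior of $\Sigma^n$ (which are dense in $L^2$), so that the middle term drops and $Q$ becomes the inner product of $(\phi,\eta)\in\mathcal H:=L^2(\Sigma^n,dv_g)\times L^2(\partial\Sigma^n,dA_g)$ with $W(u):=\bigl(\tfrac{n-2}{2}|\nabla u|^2_g,\,-\sigma_k u^2\bigr)$. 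Keeping the normalization $\vol(\Sigma^n,g)^{\frac{2-n}{n}}\vol(\partial\Sigma^n,\rho)$ constant along such a family is the single linear condition $L(\phi,\eta)=0$, where $L(\phi,\eta)=\langle(\phi,\eta),c\rangle_{\mathcal H}$ and $c=\bigl(\tfrac{2-n}{2\vol(\Sigma^n,g)},\ \tfrac{1}{\vol(\partial\Sigma^n,\rho)}\bigr)$, and along such families $\dot{\bar\sigma}_k$ and $\dot\sigma_k$ differ by this positive constant.

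The next step is the null-direction lemma: for every $(\phi,\eta)$ with $L(\phi,\eta)=0$ there is a normalized $u\in E_k(g)$ with $Q_{(\phi,\eta)}(u)=0$. This is proved exactly as Lemma~\ref{lem:null-direction}: approximate $(\phi,\eta)$ by smooth data satisfying the constraint exactly, form the corresponding normalization-preserving family, and use extremality of $(g,\rho)$ together with the a.e.\ differentiability of $\sigma_k$ to extract from the left and right difference quotients eigenfunctions $u_{\pm}\in E_k(g)$ with $\pm Q_{(\phi,\eta)}(u_{\pm})\leqslant 0$; a linear combination gives $u$. Now let $K$ be the closed convex cone in $\mathcal H$ generated by $\{W(u):u\in E_k(g)\}$; it lies in the finite-dimensional span of the products of a basis of $E_k(g)$, and $0$ is not a convex combination of nonzero $W(u)$'s (their first components are nonnegative and not identically zero), so $K$ is genuinely closed. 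I claim $-c\in K$. If not, Hahn--Banach separation of $-c$ from the closed convex cone $K$ produces $(\phi,\eta)\in\mathcal H$ with $\langle(\phi,\eta),-c\rangle>0$ and $Q_{(\phi,\eta)}(u)=\langle(\phi,\eta),W(u)\rangle\leqslant 0$ for all $u\in E_k(g)$. Subtracting from $\eta$ the constant $\kappa=\tfrac{\vol(\partial\Sigma^n,\rho)}{\vol(\partial\Sigma^n,g)}L(\phi,\eta)<0$ lands the pair on $\{L=0\}$, and the null-direction lemma then gives a normalized $u$ with $0=Q_{(\phi,\eta)}(u)+\sigma_k\kappa\int_{\partial\Sigma^n}u^2\,dA_g<0$ (using $\kappa<0$, $\sigma_k>0$, and $\int_{\partial\Sigma^n}u^2\,dA_g>0$), a contradiction, exactly as in Theorem~\ref{confSteklov:thm}.

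Since $K$ is a cone, $-c=\sum_j W(u_j)$ for finitely many $u_j\in E_k(g)$ once the convex weights are absorbed by rescaling the $u_j$. Matching the two components of this identity yields $\sum_j u_j^2=\bigl(\sigma_k(\Sigma^n,g,\rho)\,\vol(\partial\Sigma^n,\rho)\bigr)^{-1}$ on $\partial\Sigma^n$ and $\sum_j|\nabla u_j|^2_g=\vol(\Sigma^n,g)^{-1}$ on $\Sigma^n$, which are precisely 1) and 2). For the converse, let $u_1,\dots,u_m$ be $\sigma_k(g,\rho)$-eigenfunctions satisfying 1)--2), and suppose $\sigma_k>\sigma_{k-1}$ (the case $\sigma_k<\sigma_{k+1}$ is analogous). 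For any family $g(t)=e^{f(t)}g$, $\rho(t)$ preserving the normalization, substituting 1)--2) into the full first-variation formula above shows that $\sum_j Q_{(\dot f,\dot\rho)}(u_j)$ equals minus the linearization of the full normalization constraint, hence is $0$; therefore some $u_+$ and $u_-$ in $\Span\{u_j\}$ satisfy $\pm Q_{(\dot f,\dot\rho)}(u_\pm)\leqslant 0$, and the gap $\sigma_k>\sigma_{k-1}$ upgrades these into one-sided bounds on the difference quotients of $\sigma_k(g(t),\rho(t))$ forcing $\bar\sigma_k(g(t),\rho(t))\leqslant\bar\sigma_k(g,\rho)+o(t)$. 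Since the family was arbitrary (rescaling to the normalization-preserving case), $(g,\rho)$ is $\bar\sigma_k$-extremal in $\mathcal C\times C^\infty_{>0}(\partial\Sigma^n)$.

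I expect the null-direction lemma to be the one genuine difficulty: as in Fraser--Schoen, one must push the left/right difference-quotient extraction through the merely Lipschitz (not $C^1$) regularity of $t\mapsto\sigma_k(g(t),\rho(t))$ and arrange the approximating families to respect the coupled volume--density normalization while staying in $\mathcal C\times C^\infty_{>0}$. Everything else is a routine Hahn--Banach separation and the same bookkeeping already carried out for Theorems~\ref{Laplacianconfalg:thm} and~\ref{confSteklov:thm}, now with an extra boundary slot; note, in contrast with the Laplacian-with-density Theorem~\ref{Laplacianrho:thm}, that no equation here forces $\rho$ to be constant, precisely because the eigenfunctions are harmonic in the interior while conditions 1)--2) constrain them on $\Sigma^n$ and on $\partial\Sigma^n$ separately.
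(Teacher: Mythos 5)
Your proof is correct and follows essentially the same route as the paper: first-variation formula for $\sigma_k(g(t),\rho(t))$, a null-direction lemma extracted from one-sided difference quotients of the Lipschitz eigenvalue curve, Hahn--Banach separation to place a constraint covector in the convex cone spanned by $\bigl(\tfrac{n-2}{2}|\nabla u|^2,\,-\sigma_k u^2\bigr)$, and component matching. The only packaging difference is that the paper works in the three-component space $L^2(\Sigma)\times L^2(\partial\Sigma,\rho)\times L^2(\partial\Sigma)$, treating the boundary trace of the conformal factor as an independent formal slot, while you reduce to a two-component space $L^2(\Sigma)\times L^2(\partial\Sigma)$ by testing only with conformal factors supported in the interior (so $\phi|_{\partial\Sigma}=0$); since the paper's middle and last slots carry redundant information (both force $\sum\sigma_k u_j^2$ to equal a constant on $\partial\Sigma$), the two bookkeepings give exactly conditions 1) and 2). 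Your explicit check that the cone is closed (via nonnegativity of the $|\nabla u|^2$ component and nonconstancy of $\sigma_k$-eigenfunctions for $k\geqslant1$) is a detail the paper glosses over, and your observation in the converse that $\sum_j Q_{(\dot f,\dot\rho)}(u_j)$ is exactly minus the linearized normalization constraint is the right computation.
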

\begin{remark}
The formulation of this theorem is almost identical to that of Theorem~\ref{confSteklov:thm}. In comparison with Theorem~\ref{Laplacianrho:thm}, note that the density is not necessarily constant. 
\end{remark}

\begin{proof}
    For smooth families $g(t) = e^{f(t)}g$ and $\rho(t)$ of metrics and density on $\Sigma$, 
    the eigenvalue $\sigma_k(g(t), \rho(t))$ is Lipschitz and its derivative, when it exists, is given by
    \begin{align*}
        \dot\sigma_k(g(t), \rho(t)) = Q_{(\dot{f}, \dot{f}, \dot{\rho})}(u),
    \end{align*}
    where $u \in E_k(g(t), \rho(t)), \|u\|^2_{L^2(\del M, \rho(t))} = \int_{\del \Sigma} u^2 \rho(t) dv_{g(t)} = 1$
    and 
    \begin{align*}
        Q_{(\phi,\psi,\eta)}(u) = - \int_\Sigma \frac{2 - n}{2}|\nabla u|^2 \phi
        - \sigma_k(g(t), \rho(t)) \int_{\del \Sigma} u^2 \left( \frac{n - 1}{2} \psi \rho + \eta \right).
    \end{align*}
    Let $\mathcal{H} = L^2(\Sigma) \times L^2(\del \Sigma, \rho) \times L^2(\del \Sigma)$ with
    the induced inner product. For $(g,\rho)$ conformally extremal, we again have
    that for any $(\phi, \psi, \eta) \in \mathcal{H}$ such that 
    $$\Big\langle(\phi,\psi,\eta), \left(\frac{2-n}{\vol(\Sigma, g)}, \frac{n-1}{\vol(\del \Sigma, \rho)},
    \frac{2}{\vol(\del \Sigma, g)}\right)\Big\rangle_{\mathcal{H}} = 0,$$
    there exists $u \in E_k(g, \rho)$ with $Q_{(\phi,\psi,\eta)}(u) = 0$. Then using
    Hahn-Banach's theorem we conclude that  $\left(\frac{2 - n}{\vol(\Sigma)}, \frac{n-1}{\vol(\del \Sigma, \rho)},
    \frac{2}{\vol(\del \Sigma, \rho)}\right)$ is in the convex hull of $\left\{\left(\frac{2 - n}{2} |\nabla u|^2,
    \frac{n-1}{2} \sigma_k u^2, \sigma_k u^2\right), u \in E_k(g,\rho)\right\}$.
    This implies the existence of $u_1, \dots, u_m \in E_k(g,\rho)$ such that
    \begin{align*}
        \begin{cases}
            \sum_{j = 1}^m |\nabla u_j|^2 = \frac{2}{\vol(\Sigma, g)} \quad \text{in } \Sigma \\
            \sum_{j = 1}^m \sigma_k u^2 = \frac{2}{\vol(\del \Sigma, \rho)} \quad \text{on } \del \Sigma. \\
        \end{cases}
    \end{align*}

    The proof of the converse is similar to the no-density Steklov's one.
\end{proof}

\subsection{Extremality conditions in the space of all metrics}

\begin{theorem}
\label{LaplaceR:thm}
Let $M^n$ be an $n$-dimensional closed manifold, $n\geqslant 3$. Suppose that the metric $g\in \mC$ is extremal for the functional 
$$
\bar\lambda_k(M^n,g) = \lambda_k(M^n,g) \vol(M^n,g)^\frac{2}{n}
$$
 in $\mR$. Then there exists a collection $u_1,\ldots, u_m$ of $\lambda_k(M^n,g)$-eigenfunctions such that 
\begin{itemize}
    \item[1)] $\sum_{i=1}^m u_i^2 = \frac{n}{\lambda_k(g)}$
    \item[2)] $\sum_{i=1}^m d u_i\otimes du_i = g$
\end{itemize}
Conversely, if there exists a collection $\lambda_k(M^n,g)$-eigenfunctions satisfying $1),2)$ and in addition $\lambda_k(M^n,g)>\lambda_{k-1}(M^n,g)$ or $\lambda_k(M^n,g)<\lambda_{k+1}(M^n,g)$, then $g$ is extremal for the functional $\bar\lambda_k(M^n,g)$ in $\mR$.
\end{theorem}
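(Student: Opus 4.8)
The plan is to mimic the proof of Theorem~\ref{Laplacianconfalg:thm}, but allowing arbitrary smooth deformations of $g$ rather than only conformal ones $g(t)=e^{f(t)}g$; as a result the two algebraic relations on eigenfunctions become tensorial. First I would record the first variation formula for unrestricted deformations. For a smooth family $g(t)$ with $g(0)=g$ and $\dot g(0)=h$, after renormalizing so that $\vol(M^n,g(t))$ stays constant (which does not affect $\bar\lambda_k$ and forces $\int_M\trace_g h\,dv_g=0$), the same argument with the orthogonal projections as in Theorem~\ref{Laplacianconfalg:thm} gives, at a.e.\ $t$ where $\dot\lambda_k(t)$ exists, a normalized $u\in E_k(g(t))$ with $\dot\lambda_k(t)=Q_h(u)$, where
\[
Q_h(u)=\int_M\Big({-}\langle h,du\otimes du\rangle_g+\tfrac12\big(|du|_g^2-\lambda_k u^2\big)\trace_g h\Big)\,dv_g=-\int_M\langle h,T(u)\rangle_g\,dv_g,
\]
with $T(u):=du\otimes du-\tfrac12(|du|_g^2-\lambda_k u^2)\,g$ the stress--energy tensor of $u$; specializing to $h=\dot f\,g$ recovers the conformal formula of Theorem~\ref{Laplacianconfalg:thm}. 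Integrating and using $\int_M|du|_g^2\,dv_g=\lambda_k$ for a normalized eigenfunction yields the identity $\int_M\trace_g T(u)\,dv_g=\lambda_k$, independent of $u$, which will be the crux of the duality step.

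Next I would prove the tensor-valued analogue of Lemma~\ref{lem:null-direction}: if $g$ is $\bar\lambda_k$-extremal, then for every symmetric $2$-tensor $h$ with $\int_M\trace_g h\,dv_g=0$ there is a normalized $u\in E_k(g)$ with $Q_h(u)=0$. The proof is that of Lemma~\ref{lem:null-direction} verbatim, approximating $h$ by smooth tensors $h_j$, using the families obtained by renormalizing $(g+th_j)$ to constant volume, and combining the one-sided limits $u_{\pm}$. Then comes the Hahn--Banach step: working in the Hilbert space of $L^2$ symmetric $2$-tensors on $(M^n,g)$, I would let $K$ be the convex hull of $\{T(u):u\in E_k(g),\ \|u\|_{L^2}=1\}$ --- a compact set since $E_k(g)$ is finite dimensional --- and show that $\tfrac{\lambda_k}{n\,\vol(M,g)}\,g\in K$. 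If not, Hahn--Banach produces a symmetric $2$-tensor $h$ with $\int_M\langle h,T(u)\rangle_g\,dv_g<\tfrac{\lambda_k}{n\,\vol(M,g)}\int_M\trace_g h\,dv_g$ for all normalized $u\in E_k(g)$; replacing $h$ by $\tilde h=h-\tfrac{1}{n\,\vol(M,g)}\big(\int_M\trace_g h\,dv_g\big)g$, which has $\int_M\trace_g\tilde h\,dv_g=0$, and using the identity $\int_M\trace_g T(u)\,dv_g=\lambda_k$, one gets $Q_{\tilde h}(u)>0$ for all $u\in E_k(g)\setminus\{0\}$, contradicting the null-direction statement. Hence (absorbing the convex weights into the eigenfunctions, which multiplies $T(u_i)$ by the weight) there are $u_1,\dots,u_m\in E_k(g)$ with $\sum_i\|u_i\|_{L^2}^2=1$ and $\sum_{i=1}^m T(u_i)=\tfrac{\lambda_k}{n\,\vol(M,g)}\,g$.

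It remains to extract 1) and 2). Writing $P=\sum_i du_i\otimes du_i$, $A=\trace_g P$, $B=\sum_i u_i^2$, the tensor identity forces $P=\tfrac An\,g$ and, equating scalar coefficients, $\tfrac{2-n}{2}A=-\tfrac{n\lambda_k}{2}B+\tfrac{\lambda_k}{\vol(M,g)}$. Combining this with $\Delta B=2\lambda_k B-2A$ gives, for $F:=B-\tfrac1{\vol(M,g)}$, the equation $(n-2)\Delta F=-4\lambda_k F$, the same constant as in Theorem~\ref{Laplacianconfalg:thm}; integrating against $F$ and using $n\geqslant3$, $\lambda_k>0$ forces $F\equiv0$, hence $B\equiv\tfrac1{\vol(M,g)}$, $A\equiv\tfrac{\lambda_k}{\vol(M,g)}$ and $P=\tfrac{\lambda_k}{n\,\vol(M,g)}\,g$. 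Rescaling $u_i\mapsto\sqrt{n\,\vol(M,g)/\lambda_k}\;u_i$ then yields exactly $\sum_i u_i^2=\tfrac n{\lambda_k}$ and $\sum_i du_i\otimes du_i=g$. For the converse, given $\lambda_k$-eigenfunctions with 1)--2) (a direct computation gives $\sum_i T(u_i)=g$) and, say, $\lambda_k>\lambda_{k-1}$, one has for every volume-preserving family $\sum_i Q_h(u_i)=-\int_M\trace_g h\,dv_g=0$, so $Q_h$ restricted to $\Span\{u_i\}$ is indefinite or zero; choosing $u_{\pm}$ there with $\pm Q_h(u_{\pm})\leqslant0$ and arguing exactly as in the converse of Theorem~\ref{Laplacianconfalg:thm} gives $\lambda_k(g(t))\leqslant\lambda_k(g)+o(t)$, so $g$ is $\bar\lambda_k$-extremal in $\mR$.

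The step I expect to be the main obstacle is the first variation formula for general, non-conformal deformations together with the volume-renormalization bookkeeping, and checking that Lemma~\ref{lem:null-direction} transfers without change to tensor-valued directions $h$; once the quadratic form $Q_h$ and the identity $\int_M\trace_g T(u)\,dv_g=\lambda_k$ are in place, the convex-duality argument and the final algebraic manipulation are essentially identical to those in Theorem~\ref{Laplacianconfalg:thm}.
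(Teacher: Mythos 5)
Your proof is correct and follows essentially the same route as the paper's: the first variation of $\lambda_k$ under arbitrary metric deformations written via the stress–energy tensor $T(u)=du\otimes du-\tfrac12(|du|_g^2-\lambda_k u^2)g$, the tensor-valued version of Lemma~\ref{lem:null-direction} for volume-preserving directions, a Hahn--Banach separation in $L^2(S^2(M))$ to place a multiple of $g$ in the convex hull of $\{T(u)\}$, and then the trace/traceless decomposition together with the equation $(n-2)\Delta F=-4\lambda_k F$ to pin down the constants. The only difference from the paper's writeup is that you normalize the target to $\frac{\lambda_k}{n\vol(M,g)}g$ and rescale the eigenfunctions at the end, whereas the paper states $g=\sum_j T(u_j)$ directly; this is just bookkeeping, and your version makes the constants explicit.
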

\begin{proof}
    For a smooth family $g(t)$ with $g(0) = g$ and $\frac{d}{dt} g(t) = h(t)$, the eigenvalue 
    $\lambda_k(g(t))$ is Lipschitz and its derivative, when it exists, is given by
    \begin{align*}
        \dot\lambda_k(g(t)) = Q_h(u) := - \int_M \ip{du \otimes du - \frac{1}{2} |\nabla u|^2 g(t) + \frac{1}{2} \lambda_k(g(t)) u^2 g(t), h} dv_{g(t)}
    \end{align*}
    where $u \in E_k(g(t)), \|u\|_{L^2(M,g(t))} = 1$.
    For $g$ extremal, we have that for any $h \in L^2(S^2(M))$ with $\int_M \ip{h,g} dv_g = 0$, there exists
    $u \in E_k(g)$ such that $Q_h(u) = 0$. Using Hahn-Banach's theorem, it then follows that
    there exists $u_1, \dots, u_m \in E_k(g)$ such that
    \begin{align*}
        g = \sum_{j = 1}^m du_j \otimes du_j - \frac{1}{2} |\nabla u_j|^2 g + \frac{1}{2} \lambda_k(g) u_j^2 g
    \end{align*}
    Taking the traceless part, we obtain $\sum du_j \otimes du_j = \frac{1}{n} |\nabla u_j|^2 g$, while
    taking the trace, we obtain the equation \ref{eq:laplace-conf-res} (up to a factor $n$) hence
    $\sum u_j^2 = \frac{n}{\lambda_k(g)}$ and $\sum |\nabla u_j|^2 = n$. Thus $\sum du_j \otimes du_j = g$. 

    The converse's proof is once again similar and is omitted. 
\end{proof}

\begin{theorem}
\label{SteklovR:thm}
Let $\Sigma^n$ be an $n$-dimensional connected compact manifold with non-empty boundary, $n\geqslant 3$. Suppose that the pair $(g,\rho)$ is extremal for the functional 
$$
\bar\sigma_k(\Sigma^n,g,\rho) = \sigma_k(\Sigma^n,g,\rho)\vol(N,g)^{\frac{2 - n}{n}}\left(\int_{\partial N}\rho\,dv_g\right)
$$
in $\mR\times C^\infty(\Sigma^n)$.
Then $\rho$ is a constant function $\rho \equiv \rho_0$ and there exists a collection $u_1,\ldots, u_m$ of $\sigma_k(\Sigma^n,g,\rho)$-eigenfunctions such that 
\begin{itemize}
    \item[1)] $\sum_{i=1}^m u_i^2 = \frac{1}{\sigma_k \vol(\del \Sigma, \rho)}$ on $\partial \Sigma^n$;
    \item[2)] $\sum_{i=1}^m d u_i\otimes du_i = \frac{g}{\vol(\Sigma,g)}$ on $\Sigma^n$.
\end{itemize}
Conversely, if $\rho\equiv\rho_0$ and there is a collection of $\sigma_k(\Sigma^n,g,\rho)$-eigenfunctions satisfying $1),2)$ and in addition $\sigma_k(\Sigma^n,g,\rho)>\sigma_{k-1}(\Sigma^n,g,\rho)$ or $\sigma_k(\Sigma^n,g,\rho)<\sigma_{k+1}(\Sigma^n,g,\rho)$, then $g$ is extremal for the functional $\bar\sigma_k(\Sigma^n,g,\rho)$ in $\mC\times C^\infty(\partial \Sigma^n)$. 
\end{theorem}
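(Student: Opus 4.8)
The proof runs on the same template as Theorems~\ref{LaplaceR:thm} and~\ref{Steklovrho:thm}: a Hadamard-type variation formula, a null-direction lemma, and a Hahn--Banach argument producing the eigenfunctions. The only genuinely new input is the rigidity argument forcing $\rho$ to be constant, and this is exactly where the full (non-conformal) freedom of deforming $g$ in $\mR$ is exploited. First I would record the variation formula. For a smooth family $(g(t),\rho(t))$ with $g(0)=g$, $\rho(0)=\rho$, $\dot g(0)=h$, $\dot\rho(0)=\eta$, the eigenvalue $\sigma_k(\Sigma,g(t),\rho(t))$ is Lipschitz and, wherever differentiable,
$$ \dot\sigma_k = Q_{(h,\eta)}(u) := -\int_\Sigma\Big\langle du\otimes du-\tfrac12|\nabla u|^2 g,\,h\Big\rangle dv_g-\sigma_k\int_{\partial\Sigma}u^2\Big(\eta+\tfrac12\rho\,\langle h|_{\partial\Sigma},g|_{\partial\Sigma}\rangle\Big)dA_g, $$
for $u\in E_k(g,\rho)$ normalized by $\int_{\partial\Sigma}u^2\rho\,dA_g=1$, where $\langle h|_{\partial\Sigma},g|_{\partial\Sigma}\rangle$ is the tangential trace of $h$, coming from the variation of the induced boundary area form. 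This is the computation of Theorem~\ref{LaplaceR:thm} with the extra boundary term from $\frac{d}{dt}\int_{\partial\Sigma}u^2\rho(t)\,dA_{g(t)}$; note the interior Dirichlet term $\int_\Sigma|\nabla u|^2 dv_g$ needs no integration by parts and so contributes no boundary piece. Differentiating the normalization $\bar\sigma_k=\sigma_k\vol(\Sigma,g)^{\frac{2-n}{n}}\int_{\partial\Sigma}\rho\,dA_g$ gives the single linear constraint on $(h,\eta)$ under which $\bar\sigma_k$ is infinitesimally stationary.

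\textbf{Null directions and Hahn--Banach.} Exactly as in Lemma~\ref{lem:null-direction} one proves: for every $(h,\eta)$ satisfying the normalization constraint to first order there is a normalized $u\in E_k(g,\rho)$ with $Q_{(h,\eta)}(u)=0$ --- approximate $(h,\eta)$ by smooth pairs satisfying the constraint exactly, rescale each family so $\bar\sigma_k$ is exactly constant, invoke $\bar\sigma_k$-extremality of $(g,\rho)$ in $\mR\times C^\infty(\partial\Sigma^n)$ to assume $\bar\sigma_k$ does not increase, and pass to the left and right limits to produce $u_\pm$ with $\pm Q_{(h,\eta)}(u_\pm)\le 0$. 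Regarding $Q_{(h,\eta)}(u)$ as linear in the triple $(h,\,h|_{\partial\Sigma},\,\eta)\in L^2(S^2(\Sigma))\times L^2(S^2(\partial\Sigma))\times L^2(\partial\Sigma)$ --- the analogue of the space $\mathcal H$ of Theorem~\ref{Steklovrho:thm}, with the conformal slot $L^2(\Sigma)$ replaced by $L^2(S^2(\Sigma))$ --- a separating-hyperplane argument (using $\dim E_k(g,\rho)<\infty$, so the relevant convex hull is closed) shows that the gradient of the normalization is a finite convex combination of the vectors $\big(du\otimes du-\tfrac12|\nabla u|^2 g,\ \tfrac{\sigma_k}{2}u^2\rho\,g|_{\partial\Sigma},\ \sigma_k u^2\big)$, $u\in E_k(g,\rho)$. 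Matching the three components (the second and third produce the same scalar identity on $\partial\Sigma$) yields a single collection $u_1,\dots,u_m\in E_k(g,\rho)$ with $\sum_j\big(du_j\otimes du_j-\tfrac12|\nabla u_j|^2 g\big)$ proportional to $g$ on $\Sigma$ and $\sum_j u_j^2$ constant on $\partial\Sigma$.

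\textbf{Algebraic identities and constancy of $\rho$.} The trace-free part of the first identity gives $\sum_j du_j\otimes du_j=\tfrac1n\big(\sum_j|\nabla u_j|^2\big)g$, and the trace gives $\big(1-\tfrac n2\big)\sum_j|\nabla u_j|^2=\text{const}$, so $\sum_j|\nabla u_j|^2$ is constant on $\Sigma$; integrating it and using $\Delta u_j=0$, $\partial_{\nu}u_j=\sigma_k\rho u_j$ together with the boundary identity pins down all constants, and after the customary common rescaling of the $u_j$ one obtains 1) and 2). To see that $\rho$ is constant, evaluate identity 2), $\sum_j du_j\otimes du_j=\tfrac{g}{\vol(\Sigma,g)}$, on the $g$-unit outward normal $\nu$ at a point of $\partial\Sigma$: the left side is $\sum_j(\partial_\nu u_j)^2=\sigma_k^2\rho^2\sum_j u_j^2$, which by the Steklov condition and identity 1) is a constant multiple of $\rho^2$, while the right side is the constant $1/\vol(\Sigma,g)$; hence $\rho^2$, and so $\rho$, is constant. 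This is precisely the place where deforming $g$ in all of $\mR$ (not only in $\mC$) is used: the non-conformal directions of $h$ upgrade the scalar trace identity $\sum_j|\nabla u_j|^2=\text{const}$ of Theorem~\ref{Steklovrho:thm} to the full tensorial identity 2), whose normal component rigidifies $\rho$.

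\textbf{Converse, and the main difficulty.} The converse is the computation already used for the density-free Steklov case: assuming 1)--2) and the spectral gap $\sigma_k\ne\sigma_{k\pm1}$, for an arbitrary normalization-preserving family $(g(t),\rho(t))$ one checks $\sum_j Q_{(\dot g,\dot\rho)}(u_j)=0$ by substituting 1)--2) and the constraint (the Green-identity relation among the constants makes the two volume-derivative terms cancel), extracts $u_\pm\in\Span\{u_j\}$ with $\pm Q_{(\dot g,\dot\rho)}(u_\pm)\le 0$, and converts these one-sided bounds into $\bar\sigma_k(g(t),\rho(t))\le\bar\sigma_k(g,\rho)+o(t)$ via the gap; note 1)--2) already force $\rho$ constant by the previous step, so assuming $\rho\equiv\rho_0$ costs nothing. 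I expect none of the steps to be a serious obstacle, each being a transcription of an argument already carried out in the paper; the part needing most care is the functional-analytic bookkeeping in the Hahn--Banach step --- in particular the tangential-trace term $\langle h|_{\partial\Sigma},g|_{\partial\Sigma}\rangle$, which is not continuous on $L^2(S^2(\Sigma))$, so one works with smooth perturbations and uses finite-dimensionality of $E_k(g,\rho)$ throughout --- while the one genuinely new ingredient is the short normal-component rigidity argument for $\rho$.
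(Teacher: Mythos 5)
Your proposal is correct and follows essentially the same route as the paper: the same variation formula $Q_{(h,\langle h|_{\partial\Sigma},g|_{\partial\Sigma}\rangle,\eta)}(u)$, the same null-direction lemma and Hahn--Banach argument to extract the eigenfunction identities, and the same normal-component evaluation $\sum_j(\partial_\nu u_j)^2=\sigma_k^2\rho^2\sum_j u_j^2=\text{const}$ to force $\rho$ constant. The only cosmetic difference is your choice of middle slot $L^2(S^2(\partial\Sigma))$ rather than the paper's $L^2(\partial\Sigma,\rho)$, which is immaterial since the boundary generators are proportional to $g|_{\partial\Sigma}$; you also correctly flag the same regularity subtlety in the trace term that the paper leaves implicit.
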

\begin{remark}
In comparison with Theorem~\ref{Steklovrho:thm}, we see that extending the deformation space to $\mR$ forces the density to be constant. 
\end{remark}
\begin{proof}
    For smooth families $g(t), \rho(t)$ with $g(0) = g, \frac{d}{dt}g(t) = h(t)$
    and $\rho(0) = \rho, \frac{d}{dt}\rho(t) = \eta(t)$, the eigenvalue 
    $\sigma_k(g(t), \rho(t))$ is Lipschitz and when its derivative exists, it is given by
    \begin{align*}
        \dot\sigma_k(g(t), \rho(t))
        &= - \int_\Sigma \ip{h, du \otimes du - \frac{1}{2} |\nabla u|^2 g}
        - \sigma_k \int_{\del \Sigma} u^2 (\eta + \frac{\rho}{2} \ip{h|_{\del \Sigma}, g|_{\del \Sigma}}, \\
        &=: Q_{(h,\ip{h|_{\del \Sigma}, g|_{\del \Sigma}}, \eta)}(u)
    \end{align*}
    where $u \in E_k(g(t), \rho(t)), \|u\|_{L^2(\del \Sigma, \rho)} = 1$. 

    Let $\mathcal{H} = L^2(S^2(\Sigma)) \times L^2(\del \Sigma, \rho) \times L^2(\del \Sigma, g)$
    with the induced inner product from the $L^2$ spaces.  For $(g,\rho)$ extremal, we still have that for
    any $(h, f, \eta) \in \mathcal{H}$ such that 
    \begin{align*}
        \left\langle (h,f,\eta), \left(\frac{2-n}{n} \frac{1}{\vol(\Sigma,g)}, \frac{1}{\vol(\del \Sigma, \rho) },
        \frac{2}{\vol(\del \Sigma, \rho)}\right) \right\rangle= 0,
    \end{align*}
    there exists $u \in E_K(g,\rho)$ such that $Q_{(h,f,\eta)}(u) = 0$. Then the Hahn-Banach argument
    gives that there exists $u_1, \dots, u_m \in E_k(g,\rho)$ such that
    \begin{align*}
        \begin{cases}
            \frac{2 - n}{n} \frac{g}{\vol(\Sigma,g)} = \sum_{j = 1}^m \left( du_j \otimes du_j - \frac{1}{2} |\nabla u_j|^2 g \right), \\
            \frac{2}{\sigma_k \vol(\del \Sigma, \rho)} = \sum_{j = 1}^m u_j^2.
        \end{cases}
    \end{align*}
    Taking the trace of the first equation yields $\sum |\nabla u_j|^2 = \frac{2}{\vol(\Sigma,g)}$ which
    when used in the first equation gives $\sum du_j \otimes du_j = \frac{2g}{\vol(\Sigma,g)}$. By rescaling
    the $u_j$ we get then get the desired equalities. 

    Using the two equalities, we obtain for the density $\rho$:
    \begin{align*}
        \frac{1}{\vol(\Sigma, g)} &= \frac{g(\nu,\nu)}{\vol(\Sigma, g)} \\
          &= \sum_{j = 1}^m du_j \otimes du_j (\nu,\nu) = \sum_{j = 1}^m |\del_\nu u_j|^2 \\
          &= \sigma_k^2 \rho^2 \sum_{j = 1}^m u_j^2  
          = \frac{\sigma_k \rho^2}{\vol(\del \Sigma,\rho)}
    \end{align*}
    hence $\rho$ is constant on $\del \Sigma$.

    The proof of the converse is once again the same and is thus omitted.
\end{proof}

\section{Geometric extremality conditions} \label{geometricextremal:sec}

In this section we complete the proofs of geometric characterizations of extremal metrics.

\subsection{Proof of Theorem~\ref{Laplacianconf:thm}}
\label{1:sec}
 Let $g$ be a $\bar\lambda_k$-conformally extremal and $\lambda_k=\bar\lambda_k(M^n,g)$. Then by Theorem~\ref{Laplacianconfalg:thm} there is a collection $\{u_1,\ldots,u_m\}$ such that $\Delta_g u_i=\lambda_k u_i$ and
\begin{equation}
\label{cond1:eq}
\sum_{i=1}^m u_i^2 = \frac{1}{\lambda_k}\qquad \sum_{i=1}^m |du_i|_g^2=1.
\end{equation}
Let $\Phi\colon M^n\to \mathbb{S}^{m-1}$ be defined by $\Phi=\sqrt{\lambda_k}(u_1,\ldots, u_m)$. Then $|d\Phi|_g^2=\lambda_k$ and, therefore,
$$
\delta_g(|d\Phi|_g^{n-2}d\Phi) = \lambda_k^{\frac{n-2}{2}}\Delta_g\Phi = \lambda_k^\frac{n}{2}\Phi = |d\Phi|_g^n\Phi
$$
i.e. $\Phi\colon (M,g)\to\mathbb{S}^{m-1}$ is a non-degenerate $n$-harmonic map by~\eqref{n-harmonic:eq}. Finally, $g_\Phi = \frac{1}{n}|d\Phi|_g^2 g = \frac{\lambda_k}{n}g$, i.e. $g_\Phi$ is a constant multiple of $g$.

Conversely, If $\Phi\colon (M,g)\to\mathbb{S}^{m-1}$ is an $n$-harmonic map, then for $k=\ind_S(\Phi)$, one has $\lambda_k=\lambda_{k}(M^n,g_\Phi) = n$ and the components of $\frac{1}{\sqrt{\lambda_k}}\Phi$ are $\lambda_k$-eigenfunctions of $\Delta_{g_\Phi}$ satisfying~\eqref{cond1:eq}. Since by the definition of $\ind_S(\Phi)$, one has $\lambda_{k-1}<\lambda_k$, the conclusion follows from the second part of Theorem~\ref{Laplacianconf:thm}.

\subsection{Proof of Theorem~\ref{Steklovconf:thm}} First, let us show that for a non-degenerate free boundary $n$-harmonic maps $\hat\Psi$ the corresponding density $\rho_{\hat\Psi}$ is positive.

\begin{lemma} 
\label{density:lemma}
Let $\hat\Psi\colon(\Sigma^n,g)\to\mathbb{B}^m$ be a non-degenerate free boundary $n$-harmonic map. Then $\rho_{\hat\Psi}>0$.
\end{lemma}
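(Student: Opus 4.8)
The plan is to show that the normal derivative $\partial_{\nu_g}\hat\Psi$ of a non-degenerate free boundary $n$-harmonic map never vanishes along $\partial\Sigma^n$, since $\rho_{\hat\Psi}$ is (a positive power of $|d\hat\Psi|_g$ times) $|\partial_{\nu_g}\hat\Psi|$, and $|d\hat\Psi|_g>0$ by non-degeneracy. So everything reduces to ruling out a boundary point $p$ where $\partial_{\nu_g}\hat\Psi(p)=0$.

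First I would record the two defining properties at such a point $p$: the free boundary condition $\partial_{\nu_g}\hat\Psi = |\partial_{\nu_g}\hat\Psi|\,\hat\Psi$ holds on $\partial\Sigma^n$, and $\hat\Psi$ is proper with $\hat\Psi(p)\in\mathbb{S}^{m-1}$. Since $|d\hat\Psi|_g(p)\ne 0$, if $\partial_{\nu_g}\hat\Psi(p)=0$ then the full differential $d\hat\Psi(p)$ is carried by the tangential directions, i.e. $d\hat\Psi(p)$ maps $T_p\partial\Sigma^n$ onto a subspace of $T_{\hat\Psi(p)}\mathbb{R}^m$ that must in fact lie in $T_{\hat\Psi(p)}\mathbb{S}^{m-1}$ (because $\Psi=\hat\Psi|_{\partial\Sigma^n}$ takes values in the sphere, so the tangential derivatives of $\hat\Psi$ at $p$ are tangent to the sphere). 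In particular the radial component $\langle \hat\Psi, d\hat\Psi\rangle$ of the differential vanishes at $p$ in every direction — normal and tangential — so if we set $v=\tfrac12|\hat\Psi|^2-\tfrac12$, then $v\le 0$ on $\Sigma^n$, $v=0$ exactly on $\partial\Sigma^n$, and $dv(p)=0$.

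The key step is then a Hopf-type boundary maximum principle applied to $v$ for the degenerate-elliptic operator $u\mapsto \delta_g(|d\hat\Psi|_g^{n-2}du)$, which is uniformly elliptic near $p$ precisely because $|d\hat\Psi|_g$ is bounded away from zero there (non-degeneracy) and $\hat\Psi\in C^{1,\alpha}$ so the coefficient $|d\hat\Psi|_g^{n-2}$ is continuous. A direct computation using $\delta_g(|d\hat\Psi|_g^{n-2}d\hat\Psi)=0$ gives
\begin{equation*}
\delta_g\bigl(|d\hat\Psi|_g^{n-2}\,dv\bigr) = -|d\hat\Psi|_g^{n-2}\,|d\hat\Psi|_g^2 = -|d\hat\Psi|_g^{n}\le 0,
\end{equation*}
so $v$ is a (weak) supersolution of this uniformly elliptic operator near $p$; equivalently $-v\ge 0$ is a nonnegative subsolution attaining an interior-type minimum value $0$ of $v$ at the boundary point $p$. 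Since $v<0$ in the interior (the set $\{v=0\}$ is exactly $\partial\Sigma^n$, using properness) the strong maximum principle forces $v$ not to attain its maximum $0$ in the interior, and the Hopf lemma then gives $\partial_{\nu_g}v(p)>0$, i.e. $\langle \hat\Psi,\partial_{\nu_g}\hat\Psi\rangle(p)=\partial_{\nu_g}v(p)>0$. But $\partial_{\nu_g}\hat\Psi(p)=0$ would make this inner product zero, a contradiction. Hence $\partial_{\nu_g}\hat\Psi$ is nowhere zero on $\partial\Sigma^n$, and therefore $\rho_{\hat\Psi}=\sqrt{n}\,|\partial_{\nu_g}\hat\Psi|/|d\hat\Psi|_g>0$.

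The main obstacle I anticipate is making the Hopf lemma rigorous for the $n$-Laplace-type operator with only $C^{1,\alpha}$ coefficients: one needs that $|d\hat\Psi|_g^{n-2}$ stays positive and continuous in a neighborhood of $p$ (guaranteed by non-degeneracy plus the regularity theorem cited from \cite{Takeuchi}), and that $v$ itself is regular enough — $v=\tfrac12(|\hat\Psi|^2-1)$ is $C^{1,\alpha}$, which suffices to apply the classical Hopf boundary point lemma for uniformly elliptic divergence-form operators with continuous coefficients after freezing. A secondary technical point is verifying that $\{v=0\}=\partial\Sigma^n$ exactly, which is immediate from properness in the definition of a free boundary $n$-harmonic map; and one should double check the sign conventions for $\delta_g$ so that the inequality $\delta_g(|d\hat\Psi|_g^{n-2}dv)\le 0$ has the orientation needed for Hopf. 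These are all standard once the uniform ellipticity near $p$ is in place.
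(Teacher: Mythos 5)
Your proof is correct and rests on the same core idea as the paper's: the function $|\hat\Psi|^2$ is strictly subharmonic for the relevant operator, attains its maximum exactly on $\partial\Sigma^n$, and the Hopf boundary point lemma forces a strictly positive outward normal derivative, which by the free boundary condition equals $2|\partial_\nu\hat\Psi|$. The main difference is packaging. The paper avoids the degenerate $n$-Laplace-type operator altogether by working in the conformal metric $h=g_{\hat\Psi}=\tfrac1n|d\hat\Psi|_g^2 g$, in which the $n$-harmonic equation $\delta_g(|d\hat\Psi|_g^{n-2}d\hat\Psi)=0$ becomes the ordinary harmonic map equation $\Delta_h\hat\Psi=0$; then $\Delta_h(|\hat\Psi|^2)=-2|d\hat\Psi|_h^2=-2n<0$ and one invokes the textbook Hopf lemma for the Laplacian of a smooth metric (smoothness of $\hat\Psi$ and hence of $h$ holds by Takeuchi's regularity since $d\hat\Psi\ne 0$). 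You instead stay in the original metric $g$ and apply Hopf directly to the divergence-form operator $u\mapsto\delta_g(|d\hat\Psi|_g^{n-2}du)$, using non-degeneracy to get uniform ellipticity near the boundary. Both routes work; the conformal change buys a cleaner, reference-free application of the classical Hopf lemma, while your version is self-contained in $g$ but requires quoting a Hopf lemma for variable-coefficient divergence-form operators. Two minor points to tidy up: (i) your detour showing $dv(p)=0$ at a putative zero of $\partial_\nu\hat\Psi$ is unnecessary --- Hopf directly gives $\partial_\nu v(p)>0$, which combined with $\partial_\nu v=\hat\Psi\cdot\partial_\nu\hat\Psi=|\partial_\nu\hat\Psi|$ finishes the proof without contradiction; (ii) with the paper's sign convention $\Delta_g=\delta_g d$, the inequality $\delta_g(|d\hat\Psi|_g^{n-2}dv)=-|d\hat\Psi|_g^n<0$ says $v$ is a (strict) \emph{sub}solution, not a supersolution --- your stated conclusions (maximum on the boundary, $\partial_\nu v>0$) are those for a subsolution, so this is only a labeling slip, but worth fixing.
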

\begin{proof}
Let $h=g_{\hat\Psi}$, then one has $\Delta_h\hat\Psi = 0$ in the interior of $\Sigma^n$. Then $\Delta_h(|\hat\Psi|^2) = -2|d\hat\Psi|_h^2 = -2n<0$, i.e. $|\hat\Psi|^2$ is a subharmonic function, which attains its maximum at every point of $\partial\Sigma^n$. Thus, by the boundary point maximum principle $\partial_{\nu_h}|\hat\Psi|^2>0$, where $\nu_h$ is the outer unit normal vector w.r.t. $h$. At the same time,
$$
\partial_{\nu_h}|\hat\Psi|^2 = 2\hat\Psi\cdot\partial_{\nu_h}\hat\Psi = 2|\partial_{\nu_h}\hat\Psi| = 2\rho_{\hat\Psi}.
$$
\end{proof}

Let $(g,\rho)$ be an extremal pair. Let $\sigma_k=\sigma_k(\Sigma^n,g,\rho)$, then by Theorem~\ref{Steklovrho:thm} there is a collection $(u_1,\ldots,u_m)$ of $\sigma_k$-eigenfunctions such that 
\begin{equation}
\label{cond2:eq}
\sum_{i=1}^m u_i^2 = \frac{1}{\sigma_k\|\rho\|_{L^1}}\text{ on }\partial\Sigma^n,\qquad \sum_{i=1}^m|du_i|^2_g = \frac{1}{\vol(\Sigma^n,g)}\text{ on }\Sigma^n
\end{equation}
Set $\hat\Psi\colon\Sigma^n\to\mathbb{R}^m$ be given by $\sqrt{\sigma_k\|\rho\|_{L^1}}(u_1,\ldots, u_m)$. Since $\Delta_g\hat\Psi=0$ one has $\Delta_g|\hat\Psi|^2_g = -2|d\hat\Psi|_g^2<0$. Thus, by maximum principle $|\hat\Psi|^2_g$ achieves it's maximum on the boundary, therefore, $\hat\Psi\colon\Sigma^n\to\mathbb{B}^m$. 
Furthermore, 
$$
|d\hat\Psi^n|_g^2 = \frac{\sigma_k\|\rho\|_{L^1}}{\vol(\Sigma^n,g)},\quad \partial_{\nu_g}\hat\Psi = \rho\hat\Psi\parallel\hat\Psi.
$$

Since $|d\hat\Psi|^2_g$ is constant, one has
$$
\delta_g(|d\hat\Psi|_g^{n-2}d\hat\Psi) = |d\hat\Psi|_g^{n-2}\Delta_g\hat\Psi=0,
$$
therefore, $\hat\Psi$ is a non-degenerate free boundary $n$-harmonic map. Moreover, $g_{\hat\Psi} = \frac{1}{n}|d\hat\Psi|_g^2 g = \frac{\sigma_k\|\rho\|_{L^1}}{n\vol(\Sigma^n,g)}g$ and $\rho_{\hat\Psi} = \frac{\sqrt{n}|\partial_{\nu_g}\hat\Psi|}{|d\hat\Psi|_g} = \sqrt{\frac{n\vol(\Sigma^n,g)}{\sigma_k\|\rho\|_{L^1}}}\rho$, which completes the proof.

Conversely, let $\hat\Psi\colon(\Sigma^n,g)\to\mathbb{B}^m$ be a free boundary $n$-harmonic map. Let $k=\ind_S(\hat\Psi)$, then one has $\sigma_k = \sigma_k(\Sigma^n,g_{\hat\Psi},\rho_{\hat\Psi}) = 1$ and components of 
$\frac{1}{\sqrt{\sigma_k\|\rho_{\hat\Psi}\|_{L^1}}}\hat\Psi$ are $\sigma_k$-eigenfunctions satisfying 
$$
\sum_{i=1}^m u_i^2 = \frac{1}{\sigma_k\|\rho_{\hat\Psi}\|_{L^1}}\text{ on }\partial\Sigma^n,\qquad \sum_{i=1}^m|du_i|^2_g = \frac{1}{\sigma_k\|\rho_{\hat\Psi}\|_{L^1}}\text{ on }\Sigma^n.
$$
In order to apply the second part of Theorem~\ref{Steklovrho:thm} we need to show that $\sigma_k\|\rho_{\hat\Psi}\|_{L^1} = \vol(\Sigma^n,g_{\hat\Psi})$. To achieve this, note that $\Delta_{g_{\hat\Psi}}|\hat\Psi|^2 = -2|d\hat\Psi|_{g_{\hat\Psi}} = -2$. Thus, integrating $\frac{1}{2}\Delta_{g_{\hat\Psi}}|\hat\Psi|^2$ and applying Green's formula yields
$$
\vol(\Sigma^n,g_{\hat\Psi}) = \int_{\partial\Sigma^n}\sigma_k\rho_{\hat\Psi} = \sigma_k\|\rho_{\hat\Psi}\|_{L^1}.
$$

\subsection{Proof of Theorem~\ref{Laplace-extremal:thm}} 
\label{3:sec}
Let $g$ be a $\bar\lambda_k$-extremal metric. Then by Theorem~\ref{LaplaceR:thm} there exists a collection $(u_1,\ldots, u_m)$ of $\lambda_k(M^n,g)$ eigenfunctions satisfying
$$
\sum_{i=1}^m u_i^2 = \frac{n}{\lambda_k(M^n,g)},\qquad \sum_{i=1}^m du_i\otimes du_i = g.
$$
In particular, taking the trace of the second equality yields 
$$
\sum_{i=1}^m |du_i|_g^2 = n.
$$
Therefore, one can repeat the arguments of Section~\ref{1:sec} to conclude that $\Phi = \sqrt{\frac{\lambda_k(M^n,g)}{n}}(u_1,\ldots,u_m)$ is a non-degenerate $n$-harmonic map to the sphere $\mathbb{S}^{m-1}$. Additionally, one has that the pullback $\Phi^*g_{\mathbb{S}^{m-1}}$ is proportional to $g$ and, in particular, the map $\Phi$ is conformal. Then by~\cite[Corollary 4]{Takeuchi} $\Phi$ is a minimal immersion.

The proof of converse is identical to that in Section~\ref{1:sec}. The only new observation is that for a minimal immersion $\Phi$ one has $g_\Phi = \Phi^*g_{\mathbb{S}^{m-1}}$.

\subsection{Proof of Theorem~\ref{Steklov-extremal:thm}} The proof is identical to that in Section~\ref{3:sec}. The important point is that for any free boundary minimal immersion $\hat\Psi$ one has $\rho_{\hat\Psi} \equiv 1$.

\section{Extremal metrics on the annulus}
\label{Annuli:sec}
The aim of this section is to study the Steklov problem on the annulus and
prove Theorem \ref{annulus:thm} and Proposition \ref{annulus:prop}.

\subsection{Setup of the problem}
Let $\mathbb{A}_T = [0,T] \times S^1$ an annulus with metric $g = f(t)(dt^2 + d\theta^2)$.
By the uniformization theorem, any annulus with a rotationally symmetric metric is isometric to an annulus of this form for some $T$ which fixes its conformal class.
The Steklov problem on $\mathbb{A}_T$ is then
\begin{align} \label{eq:Steklov-annulus}
    \begin{cases}
        \Delta u = 0 & \text{in } \mathbb{A}_T \\
        \del_t u = - \sigma f(0) u & \text{on } \{0\} \times S^1 \\
        \del_t u = \sigma f(T) u & \text{on } \{T\} \times S^1
    \end{cases}.
\end{align}
Let $\rho_1 = f(0)$ and $\rho_2 = f(T)$. Then
simple calculations, using that the Laplacian is conformally invariant in 2-dimension, 
give that the weighted Steklov eigenfunctions are:
\begin{align*}
    u_1(t,\theta) &= -1 + \sigma^{(0)} \rho_1  t  \\
    u_2^{(n)}(t,\theta) &= \cosh(t) \cos(\theta) - \sigma^{(n)}_{\pm} \rho_1 \sinh(t) \cos(\theta) \\
    u_3^{(n)}(t,\theta) &= \cosh(t) \sin(\theta) - \sigma^{(n)}_{\pm} \rho_1 \sinh(t) \sin(\theta)
\end{align*}
where $\sigma^{(0)}$ and $\sigma^{(n)}_{\pm}$ are their corresponding eigenvalues, given by
\begin{align*}
    \sigma^{(0)} &= \frac{1}{T}\left(\frac{1}{\rho_1 } + \frac{1}{\rho_2 }\right) \\
    \sigma^{(n)}_{\pm}
                 &= \frac{n}{2} \left( \left(\frac{1}{\rho_1} + \frac{1}{\rho_2}\right) \coth(nT)
                     \pm \left( \left( \frac{1}{\rho_1} + \frac{1}{\rho_2}\right)^2 \coth(nT)^2
                 - \frac{4}{\rho_1 \rho_2} \right)^{1/2} \right).
\end{align*}
For our ends, we are only interested in the lowest (non zero) eigenvalue and focus on $\sigma^{(0)}$ and $\sigma^{(n)}_{-}$.
Using the identity $x - y = \frac{x^2 - y^2}{x + y}$ we see that
$\sigma^{(n)}_{-}$ is increasing with $n$. Hence we only need
to consider $\sigma^{(0)}$ and $\sigma^{(1)}_{-}$. To get an immersion in $\mathbb{R}^3$
by first eigenfunctions we need the first eigenvalue to have multiplicity 3.
But $\sigma^{(0)}$ and $\sigma^{(1)}_{-}$
have multiplicity $1$ and $2$ respectively. Hence we obtain the condition
$\sigma_1 = \sigma^{(0)} = \sigma^{(1)}_{-}$ which gives the equation
\begin{align*}
    \frac{1}{T}
    &= \frac{2 \rho_1 \rho_2}{(\rho_1 + \rho_2)^2} 
    \left(\coth(T) + \left(\coth(T)^2 - \frac{4 \rho_1 \rho_2}{(\rho_1 + \rho_2)^2}\right)^{1/2}\right)^{-1}
\end{align*}
The left hand side is decreasing from $+\infty$ at $T = 0$ to $0$ at $T = +\infty$ while
the right hand side is increasing on $\mathbb{R}^+$. Hence the equation has a unique solution $T_q$
where $q = \frac{\rho_1}{\rho_2}$ (the fact that the solution depends only on this ratio
can be seen be rewriting $\frac{4 \rho_1 \rho_2}{(\rho_1 + \rho_2)^2} = \frac{4 q}{(1+ q)^2}$).

We now fix $T_q$ and consider it as a function of $q$. To simplify notation we write $u_2^{(1)} = u_2, u_3^{(1)} = u_3$ and
$\sigma_1 = \sigma^{(0)} = \sigma^{(1)}_{-} = \frac{\rho_1 + \rho_2}{\rho_1 \rho_2 T_q}$.
We have the following results on $T_q$
\begin{lemma} \label{lem:Tq-facts}
    $T_q$ is decreasing for $q \in (0,1]$ and increasing for $q \in [1, +\infty)$, with a minimum
    at $q = 1$. Furthermore, for all $q > 0$, 
    \begin{align*}
        \frac{(1+q)^2}{2q} < \frac{(1+q)^2}{2q} \coth(T_q) < T_q < \frac{(1+q)^2}{q}
    \end{align*}
    and
    \begin{align*}
        T_q > q + \frac{1}{q}.
    \end{align*}
\end{lemma}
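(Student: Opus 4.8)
The plan is to replace the transcendental equation defining $T_q$ by a single clean relation. Set $a := \dfrac{4\rho_1\rho_2}{(\rho_1+\rho_2)^2} = \dfrac{4q}{(1+q)^2}$; then $0 < a \leqslant 1$ with equality only at $q=1$ (since $(1+q)^2-4q=(1-q)^2$), and $a'(q)=\dfrac{4(1-q)}{(1+q)^3}$ shows that $a(q)$ is strictly increasing on $(0,1]$ and strictly decreasing on $[1,\infty)$. Rationalising the defining relation $\tfrac{1}{T}=\tfrac{a/2}{\coth T + \sqrt{\coth^2 T - a}}$ (multiply through by $\coth T - \sqrt{\coth^2 T - a}$ and use $(\coth T)^2 - (\coth^2 T - a) = a$) turns it into $\tfrac{2}{T} = \coth T - \sqrt{\coth^2 T - a}$; hence $\sqrt{\coth^2 T - a} = \coth T - \tfrac{2}{T}$, and squaring yields the key identity
\[
a \;=\; \psi(T_q), \qquad \psi(T) := \frac{4\,(T\coth T - 1)}{T^{2}} .
\]
All four assertions of the lemma will be read off from this identity together with the monotonicity of $\psi$.

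The main step is to show that $\psi$ is strictly decreasing on $(0,\infty)$. I would use the Mittag--Leffler expansion $\coth T = \tfrac1T + \sum_{n\geqslant 1}\tfrac{2T}{T^{2}+n^{2}\pi^{2}}$, which gives
\[
\psi(T) \;=\; 8\sum_{n=1}^{\infty}\frac{1}{T^{2}+n^{2}\pi^{2}},
\]
a locally uniformly convergent sum of strictly decreasing positive functions, hence itself strictly decreasing on $(0,\infty)$ (with $\psi(0^+)=\tfrac43$, $\psi(+\infty)=0$ as a byproduct). Alternatively one computes $\psi'(T)=\tfrac{4}{T^{3}}\bigl(2-T\coth T - T^{2}/\sinh^{2}T\bigr)$ and reduces $T\coth T + T^{2}/\sinh^{2}T > 2$, via the substitution $s=2T$, to $h(s):=s\sinh s + s^{2}+4-4\cosh s>0$, which holds because $h(0)=h'(0)=h''(0)=h'''(0)=0$ and $h^{(4)}(s)=s\sinh s>0$ on $(0,\infty)$. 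I expect this monotonicity to be the only step requiring real work; everything else is formal.

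From the monotonicity of $\psi$, the quantity $T_q=\psi^{-1}(a(q))$ is a strictly decreasing function of $a$, so composing with the behaviour of $a(q)$ above shows that $T_q$ is strictly decreasing on $(0,1]$, strictly increasing on $[1,\infty)$, with its minimum at $q=1$. Moreover $\psi(2)=2\coth 2 - 1 > 1 \geqslant a = \psi(T_q)$ (because $\coth 2 > 1$), so $\psi(T_q)<\psi(2)$ and hence $T_q>2$ for every $q>0$. This lower bound drives the remaining estimates.

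For the inequalities, note $\tfrac{(1+q)^2}{2q}=\tfrac{2}{a}$ and $\tfrac{(1+q)^2}{q}=\tfrac{4}{a}=\bigl(q+\tfrac1q\bigr)+2$, and rewrite the key identity as $\tfrac{4}{a}=\tfrac{T_q^{2}}{T_q\coth T_q - 1}$, the denominator being positive since $T_q\coth T_q>T_q>2$. Then $\tfrac{2}{a}<\tfrac{2}{a}\coth T_q$ is simply $\coth T_q>1$; substituting for $\tfrac2a$, the inequality $\tfrac2a\coth T_q<T_q$ becomes $T_q\coth T_q>2$, true since $T_q\coth T_q>T_q>2$; the inequality $T_q<\tfrac{4}{a}$ becomes $T_q\coth T_q<T_q+1$, equivalently $e^{2T_q}>1+2T_q$, true for all $T_q>0$; and $T_q>q+\tfrac1q=\tfrac4a-2$ becomes $\tfrac4a-T_q<2$, which after clearing the positive denominator reduces to $T_q(T_q+2)(\coth T_q-1)+(T_q-2)>0$, immediate from $T_q>2$. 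Each of these reductions is a one-line manipulation, so once the key identity and the bound $T_q>2$ are in place the lemma is proved.
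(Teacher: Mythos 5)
Your proof is correct but takes a genuinely different (and cleaner) route than the paper's. The paper works directly with the implicit defining relation
\begin{equation*}
T_q = \frac{(1+q)^2}{2q}\left(\coth T_q + \left(\coth^2 T_q - \frac{4q}{(1+q)^2}\right)^{1/2}\right),
\end{equation*}
proving monotonicity of $T_q$ by a contradiction argument (if $T_q$ were increasing on $(0,1]$ the right-hand side would be decreasing there), and establishing each of the four inequalities by a separate algebraic manipulation of this relation (the upper bound $T_q<\tfrac{(1+q)^2}{q}$ requires the most work and reduces to $\coth x < 1 + \tfrac{1}{2x}$ for $x\geqslant 2$). You instead rationalise the defining equation into the explicit identity $a=\psi(T_q)$ with $\psi(T)=\tfrac{4(T\coth T-1)}{T^2}$ and $a=\tfrac{4q}{(1+q)^2}$, then prove $\psi$ is strictly decreasing (via the Mittag--Leffler series $\psi=8\sum_{n\geqslant 1}(T^2+n^2\pi^2)^{-1}$ or the fourth-derivative argument $h^{(4)}(s)=s\sinh s>0$). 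From there, the monotonicity of $T_q$ in $q$ falls out by composing $\psi^{-1}$ with $a(q)$, the universal bound $T_q>2$ falls out from $\psi(2)>1\geqslant a$, and each inequality reduces to a one-line consequence of $a=\psi(T_q)$ and $T_q>2$. I verified the rationalisation, the series identity, the derivative computation, and all four algebraic reductions; they are correct. Your version has the advantage of a single unifying identity and a monotonicity lemma that does all the work, whereas the paper's argument is more piecemeal and its monotonicity-by-contradiction step is argued somewhat informally; the two reach the same conclusions.
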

\begin{proof}
    Due to the symmetry of the problem when interchanging $\rho_1$ and $\rho_2$, we have
    $T_q = T_{1/q}$. To prove the first part of the lemma, it is then enough to show $T_q$
    decreasing for $q \in (0,1]$. Suppose on the contrary that $T_q$ is increasing for
    some $q \in (0,1]$. Then we see in the equation for $T_q$,
    \begin{align} \label{eq:Tq}
        T_q = \frac{(1+q)^2}{2q} \left(\coth(T_q) + \left(\coth(T_q) - \frac{4q}{(1+q)^2}\right)^{1/2}\right)
    \end{align}
    that the right hand side is decreasing, a contradiction. 

    The first two lower bounds are directly obtained from the equation (\ref{eq:Tq}) for $T_q$
    \begin{align*}
        T_q &= \frac{(1+q)^2}{2q} \left(\coth(T_q) + \left(\coth(T_q)^2 - \frac{4q}{(1+q)^2}\right)^{1/2}\right) \\
            &> \frac{(1+q)^2}{2q} \coth(T_q) > \frac{(1+q)^2}{2q}
    \end{align*}
    For the last lower bound we have
    \begin{align*}
        \frac{(1+q)^2}{q} \coth(T_q) - T_q &= \frac{2}{\coth(T_q) + \left(\coth(T_q)^2 - \frac{4q}{(1+q)^2}\right)^{1/2}} < 2.
    \end{align*}
    So
    \begin{align*}
        T_q > \frac{(1+q)^2}{q} \coth(T_q) - 2 > \frac{(1+q)^2}{q} - 2 = q + \frac{1}{q}.
    \end{align*}

    More work is required for the upper bound. From the equation for $T_q$, the upper bound is equivalent
    to the inequality
    \begin{align*}
        \coth(T_q) + \left(\coth(T_q)^2 - \frac{4q}{(1+q)^2}\right)^{1/2} < 2,
    \end{align*}
    itself equivalent to
    \begin{align*}
        \coth(T_q) < 1 + \frac{q}{(1+q)^2}.
    \end{align*}
    From the lower bound on $T_q$ and since $\coth$ is decreasing, $\coth(T_q) < \coth\left(\frac{(1+q)^2}{2q}\right)$. 
    Thus it suffices to show $\coth\left(\frac{(1+q)^2}{2q}\right) < 1 + \frac{q}{(1+q)^2}$,
    i.e. $\coth(x) < 1 + \frac{1}{2x}$ for $x \geqslant 2$ by setting $x = \frac{(1+q)^2}{2q}$. Using that
    $\coth(x) - 1 = \frac{2}{e^{2x} - 1}$, proving this last inequality is equivalent to proving
    \begin{align*}
        4x < e^{2x} - 1 \quad \text{for } x \geqslant 2,
    \end{align*}
    which is clear since $8 < e^4 - 1 \approx 53$ and $e^{2x}$ increases faster than $4x$. 
\end{proof}

\subsection{Proof of Theorem~\ref{annulus:thm}}
Since $T \geqslant T_1$ and $T_q \to \infty$ when $q \to \infty$, we can find a $f : [0,T] \to \mathbb{R}^+$
such that $T = T_q$ with $q = \frac{f(0)}{f(T)}$.

From the eigenfunctions $u_1, u_2, u_3$ found above, 
we want to construct a harmonic map $\hat\Psi_T : \mathbb{A}_T \to B^3$ such that $\hat\Psi_T(\del \mathbb{A}_T) \subset S^2$.
Let $c_1, c_2 \in \mathbb{R}^+$ and let $\hat\Psi_T = (c_1 u_1, c_2 u_2, c_2 u_2)$ then the boundary
condition $\hat\Psi_T(\del \mathbb{A}_T) \subset S^2$ implies
\begin{align*}
    \begin{cases}
        1 &= c_1^2 + c_2^2 \\
        1 &= c_1^2 a(q)^2 + c_2^2 b(q)^2.
    \end{cases}
\end{align*}
where
\begin{align*}
    a(q) &= -1 +  \sigma_1 \rho_1 T_q = q\\
    b(q) &= \cosh(T_q) - \sigma_1 \rho_1 \sinh(T_q).
\end{align*}
Clearly a solution $(c_1, c_2)$ exists if and only if
$(a(q)^2 = b(q)^2 = 1)$ or $(a(q)^2 < 1, b(q)^2 > 1)$ or $(a(q)^2 > 1, b(q)^2 < 1)$. The case $a(1)^2 = b(1)^2 = 1$
gives the critical catenoid and the map $\hat\Psi_{T_1}$, proving (1). 

We now show that a solution exists when $q \neq 1$. Since the problem does not change when switching $\rho_1
\leftrightarrow \rho_2$, we can consider without loss of generality
only the case $0< q < 1$ and it suffices to show $b(q)^2 > 1$. Knowing that $b(1) = 1$, we will
show that $b$ is decreasing on $(0,1)$. 

The derivative of $b$ is 
\begin{align*}
    b'(q) 
           &= \frac{\sinh(T_q)}{T_q^2} \Big(T_q' \left(q + 1 + T_q^2 - (1+q) T_q \coth(T_q)\right)
           - T_q \Big)
\end{align*}
and since $T_q > 0$ and by lemma \ref{lem:Tq-facts}, $T_q$ is decreasing for $q \in (0,1)$, it suffices to show that 
\begin{align*}
    0< T_q - (1+q) \coth(T_q).
\end{align*}
By lemma \ref{lem:Tq-facts}, $T_q > \frac{(1+q)^2}{2q} \coth(T_q)$ hence
\begin{align*}
    T_q - (1+q) \coth(T_q) > (1 + q)T_q \coth(T_q) \left(\frac{1+q}{2q} - 1\right) > 0
\end{align*}
for $q \in (0,1)$. 
This proves that for any $T \geqslant T_1$, there exists a rotationally symmetric free boundary harmonic map
$\hat\Psi_T : \mathbb{A}_T \to \mathbb{B}^3$ of spectral index 1. 

Examples of the surface $\hat\Psi_{T}(\mathbb{A}_T)$ are shown in figure \ref{fig:examples}.
As can be seen from those figures and direct calculations, $\hat\Psi_T(\mathbb{A}_T)$ is a section of
a stretched catenoid. When $T \to \infty$, this stretched catenoid collapses to a disk with a segment 
orthogonal to it.

\begin{figure}
    \centering
    \begin{subfigure}[h]{0.4\textwidth}
        \includegraphics[width=\textwidth]{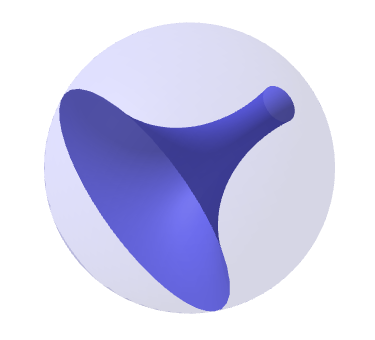}
        \caption{$q = 1/2$}
    \end{subfigure}
    \begin{subfigure}[h]{0.4\textwidth}
        \includegraphics[width=\textwidth]{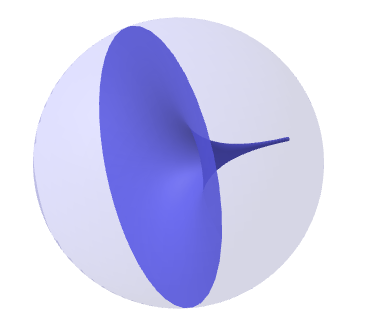}
        \caption{$q = 1/4$}
    \end{subfigure}
    \caption{Some examples of the surface $\hat\Psi_{T_q}(\mathbb{A}_{T_q})$ for different values of $q = \frac{\rho_1}{\rho_2}$.}
    \label{fig:examples}
\end{figure}

\emph{Proof of (2).} The fact that the pair $(g_{\hat\Psi_T}, \rho_{\hat\Psi_T})$ is rotationally symmetric
is clear by construction. The density $\rho_{\hat\Psi_T} = \frac{\sqrt{n}\sigma_1}{|d\hat\Psi_T|_g}$
so we prove that for $T \geqslant T_2 \approx 3.04$, $|d\hat\Psi_T|_g$ depends on the boundary component
of $\mathbb{A}_T$. Calculating for $t_1 = 0, t_2 = T$,
\begin{align*}
    |d\hat\Psi_T|_g^2(t_i,\theta) 
    = \sigma_1^2 + \frac{c_2^2}{\rho_i^2} (\cosh(t_j) - \sigma_1 \rho_1 \sinh(t_j))^2.
\end{align*}
Remark that for $t_1 = 0$ the second term is $\frac{c_2^2}{\rho_1^2}$ while for $t_2 = T =:T_q$
it is $\frac{c_2^2 b(q)^2}{\rho_2^2}$. Hence we will prove that $b(q) \neq \frac{\rho_2}{\rho_1} = \frac{1}{q}$
for $T_q \geqslant T_2$, i.e for $q \leqslant 2$ and for $q \geqslant 2$. 
As discussed previously, it is enough to prove the case $q \geqslant 2$.
Rewriting $b(q)$ as
\begin{align*}
    b(q) &= \cosh(T_q) - \frac{1}{2}(q+1)\left(\coth(T_q) - \left(\coth(T_q)^2 - \frac{4q}{(1+q)^2} \right)^{\frac{1}{2}}\right) \sinh(T_q) \\
         &= \frac{2q}{\Big((1+q)^2 \cosh(T_q)^2 - 4q \sinh(T_q)^2\Big)^{1/2} - (1-q)\cosh(T_q)} \\
         &= \frac{2q}{\frac{2q}{1+q}T_q \sinh(T_q) - (1+q) \cosh(T_q) - (1-q)\cosh(T_q)} \\
         &= \frac{1}{\frac{T_q}{1+q}\sinh(T_q) - \frac{1}{q} \cosh(T_q)},
\end{align*}
where the second to last equality comes from the equation for $T_q$, it is enough to show
$\frac{T_q}{1+q} \sinh(T_q) - \frac{1}{q}\cosh(T_q) \neq q$. Using lemma \ref{lem:Tq-facts}, we have the lower bound
\begin{align*}
     \frac{T_q}{1+q} \sinh(T_q) &- \frac{1}{q}\cosh(T_q) 
        \geqslant \\ 
        &\geqslant\frac{1}{1+q} \left(\frac{(1+q)^2}{q} \coth(T_q) - 2\right) \sinh(T_q) - \frac{1}{q} \cosh(T_q) \\
        &= \left(1 - \frac{2}{1+q}\right) \cosh(T_q) + \frac{2}{1+q} e^{-T_q} \\
        &\geqslant \frac{q-1}{q+1} \cosh(T_q) \geqslant \frac{q-1}{q+1} \cosh\left(q+ \frac{1}{q}\right).
\end{align*}
For $q = 2$, we have $\frac{q-1}{q+1} \cosh\left(q + \frac{1}{q}\right) \approx 3.1 > 2 = q$ and
since $\frac{q-1}{q+1} \cosh(q + 1/q)$ grows faster than $q$, this proves part (2).

\emph{Proof of (3).} By construction, $\bar\sigma_1 (\mathbb{A}_T, g_{\hat\Psi_T}, \rho_{\hat\Psi_T}) 
=  2E(\hat\Psi_T)$ and
\begin{align*}
    \bar\sigma_1(\mathbb{A}_T, g_{\hat\Psi_T}, \rho_{\hat\Psi_T})
    = \frac{1}{T_q} \left(\frac{1}{\rho_1} + \frac{1}{\rho_2}\right) 2\pi (\rho_1 + \rho_2)
    = \frac{2\pi (1+q)^2}{qT_q}
\end{align*}
By lemma \ref{lem:Tq-facts}, $q + \frac{1}{q} < T_q < \frac{(1+q)^2}{q}$ hence
\begin{align*}
    2\pi < \bar\sigma_1(\mathbb{A}_T, g_{\hat\Psi_T}, \rho_{\hat\Psi_T}) < 2\pi + \frac{4\pi q}{q^2 + 1}.
\end{align*}
This provides the desired lower bound while the limit is obtained by noting that by lemma \ref{lem:Tq-facts},
when $T \to \infty$, $q$ goes to $0$ or $\infty$. 

\subsection{Proof of Proposition~\ref{annulus:prop}}
From lemma \ref{lem:Tq-facts}, $T_q$ achieves its minimum at $q=1$ hence for the conformal class on the annulus
with $T < T_1$, one cannot have a rotationally symmetric $\bar\sigma_1$-conformally extremal metric given by a map to $B^3$.
The only possible rotationally symmetric $\bar\sigma_1$-conformally extremal metric comes instead from a map $\hat\Psi_T : \mathbb{A}_T \to \mathbb{B}^2$. 
From our previous discussion, since $T < T_1$, the components of $\hat\Psi_T$ are first eigenfunctions
$u_i(t,\theta) = c_i (\cosh(t) - \sigma_1 \rho_1' \sinh(t)) S_i(\theta)$ with $S_1 = \cos, S_2 = \sin$.
From the condition that $\hat\Psi_T(\del \mathbb{A}_T) \subset S^1$, we have $c_1^2 = c_2^2 = 1$ and we
must satisfy
\begin{align*}
    1 &= (\cosh(T) - \sigma_1 \rho_1 \sinh(T))^2
\end{align*}
But
\begin{align*}
    \cosh(T) - \sigma_1 \rho_1 \sinh(T)
    &= \frac{1}{2} (1- q) \cosh(T)  \\
    &+ \frac{1}{2}(1+q) \Big(\cosh(T)^2 - \frac{4q}{(1+q)^2}\sinh(T)^2\Big)^{1/2}
\end{align*}
so taking $f : [0,T] \to \mathbb{R}^+$ with $f(0) =: \rho_1 = 1 = \rho_2 := f(T)$,
gives $q = 1$ and the condition is satisfied.

The normalized eigenvalue is
\begin{align*}
    \sigma(\mathbb{A}_T, g_{\hat\Psi_T}, \rho_{\hat\Psi_T}) 
    &= 4\pi \left(\coth(T) - \left( \coth(T)^2- 1\right)^{1/2}\right) \\
    &= 4\pi \tanh(T/2).
\end{align*}
Hence for all $T \leqslant \tilde{T}$ where $\tilde{T} \approx 1.10$ is the solution of $2\tanh(T/2) = 1$,
$\sigma(\mathbb{A}_T, g_{\hat\Psi_T}, \rho_{\hat\Psi_T}) \leqslant 2\pi$. But it was shown
in \cite{MP} that the supremum of the normalized eigenvalue over a conformal class must be $> 2\pi$,
hence $(g_{\hat\Psi_T}, \rho_{\hat\Psi_T})$ is not a maximal pair.


\begin{thebibliography}{99}
\bibitem[BFNT]{BFNT} D.~Bucur, V.~Ferone, C.~Nitsch, C.~Trombetti, Weinstock inequality in higher dimensions. Preprint {\tt arXiv:1710.04587}
\bibitem[CiG]{CG} D.~Cianci, A.~Girouard, Large spectral gaps for Steklov eigenvalues under volume constraints and under localized conformal deformations. {\em Annals of Global Analysis and Geometry} {\bf 54}:4 (2018), 529--539.
\bibitem[CES]{CES} B.~Colbois, A.~El Soufi, Extremal eigenvalues of the Laplacian in a conformal class of metrics: the ``conformal spectrum". {\em Annals of Global Analysis and Geometry} {\bf 24}:4 (2003), 337 -- 349.
\bibitem[CEG1]{CEG1} B. Colbois, A. El Soufi, A. Girouard, Isoperimetric control of the Steklov spectrum,
{\em Journal of Functional Analysis} {\bf 261}:5 (2011), 1384--1399.
\bibitem[CEG2]{CEG2} B.~Colbois, A.~El~Soufi, A.~Girouard, Compact manifolds with fixed boundary and large Steklov eigenvalues. {\em Proc. Amer. Math. Soc} {\bf 147}:9 (2019), 3813--3827
\bibitem[EGJ]{EGJ} A.~El Soufi, H.~Giacomini, M.~Jazar, A unique extremal metric for the least eigenvalue of the Laplacian on the Klein bottle. {\em Duke Mathematical Journal} {\bf 135}:1 (2006), 181--202.
\bibitem[ESI1]{ESI1} A.~El Soufi, S.~Ilias, Extremal metrics for the first eigenvalue of the Laplacian in a conformal class. {\em Proceedings of the AMS} {\bf 131}:5 (2002), 1611--1618.
\bibitem[ESI2]{ESI2} A.~El Soufi, S.~Ilias, Laplacian eigenvalues functionals and metric deformations on compact manifolds. {\em J. Geom. Phys.} {\bf 58}:1 (2008), 89--104. 
\bibitem[FSar]{FSar} A.~Fraser, P.~Sargent, Existence and classification of ${\mathbb {S}}^ 1$-invariant free boundary minimal annuli and M\"obius bands in ${\mathbb {B}}^ n$. {\em The Journal of Geometric Analysis} {\bf 31}:3 (2021), 2703--2725.
\bibitem[FS1]{FS1} A.~Fraser, R.~Schoen, The first Steklov eigenvalue, conformal geometry, and minimal surfaces. {\em Advances in Mathematics}, {\bf 226}:5 (2011), 4011--4030.
\bibitem[FS2]{FS2} A.~Fraser, R.~Schoen, Sharp eigenvalue bounds and minimal surfaces in the ball.  {\em Inventiones mathematicae} {\bf 203}:3 (2016), 823--890.
\bibitem[FS3]{FS3} A.~Fraser, R.~Schoen, Minimal surfaces and eigenvalue problems. {\em Geometric analysis, mathematical relativity, and nonlinear partial differential equations}. {\bf 599} (2012), 105 -- 121.
\bibitem[FS4]{FS4} A.~Fraser, R.~Schoen, Shape optimization for the Steklov problem in higher dimensions.
{\em Advances in Mathematics} {\bf 348} (2019), 146--162.
\bibitem[FTY]{FTY} X.-Q.~Fang, L.-F.~Tam, C. Yu, Extremal problems for Steklov eigenvalues on annuli. {\em Calc. Var}, {\bf 54}:1 (2015), 1043--1059.
\bibitem[GKL]{GKL} A.~Girouard, M.~Karpukhin, J.~Lagac\'e, Continuity of eigenvalues and shape optimisation for Laplace and Steklov eigenvalues. Preprint {\tt arXiv:2004.10784}.
\bibitem[GL]{GL} A.~Girouard, J.~Lagac\'e, Large Steklov eigenvalues via homogenisation on manifolds. Preprint {\tt arXiv:2004.04044}
\bibitem[GP]{GP} A.~Girouard, I.~Polterovich, Spectral geometry of the Steklov problem. {\em Journal of Spectral Theory}, {\bf 7}:2 (2017), 321--359.
\bibitem[GNY]{GNY} A. Grigor'yan, Y. Netrusov, S.-T. Yau, Eigenvalues of elliptic operators and geometric applications. {\em Surveys in Differential Geometry, vol. IX, in: Surv. Differ. Geom., vol. IX}, Int. Press, Somerville, MA, 2004, pp. 147--217.
\bibitem[Has]{Has} A.~Hassannezhad, Conformal upper bounds for the eigenvalues of the Laplacian and Steklov problem. {\em Journal of Functional Analysis}, {\bf 261}:12 (2011), 3419--3436.
\bibitem[JNP]{JNP} D.~Jakobson, N.~Nadirashvili, and I.~Polterovich, Extremal metric for the first eigenvalue on a Klein bottle. {\em Canadian J. of Mathematics} {\bf 58}:2 (2006), 381--400.
\bibitem[K]{KarpukhinRP2} M.~Karpukhin, Index of minimal  surfaces and isoperimetric eigenvalue inequalities. {\em Inventiones Mathematicae} {\bf 223} (2021), 335 -- 377.
\bibitem[KKP]{KKP} M. Karpukhin, G. Kokarev, I. Polterovich, Multiplicity bounds for Steklov eigenvalues on Riemannian surfaces. {\em Annales de l'Institut Fourier}, {\bf 64}:6 (2014), 2481--2502.
\bibitem[KNPP]{KNPP} M.~Karpukhin, N.~Nadirashvili, A.~Penskoi, I.~Polterovich, An isoperimetric inequality for Laplace eigenvalues on the sphere. To appear in {\em J. Differential Geom.} Preprint {\tt arXiv:1706.05713}.
\bibitem[KNPP2]{KNPP2} M.~Karpukhin, N.~Nadirashvili, A.~Penskoi, I.~Polterovich, Conformally maximal metrics for Laplace eigenvalues on surfaces. Preprint {\tt arXiv:2003.02871}.
\bibitem[KS]{KS} M. Karpukhin, D. L. Stern, Min-max harmonic maps and a new characterization of conformal eigenvalues. Preprint {\tt arXiv:2004.04086}.
\bibitem[Kok]{Kokarev} G.~Kokarev, Variational aspects of Laplace eigenvalues on Riemannian surfaces. {\em Advances in Mathematics} {\bf 258} (2014), 191--239.
\bibitem[Kor]{Korevaar} N. Korevaar, Upper bounds for eigenvalues of conformal metrics. {\em J. Differential Geom.} {\bf 37}:1 (1993), 79--93.
\bibitem[Li]{Li} M. Li, Free boundary minimal surfaces in the unit ball: recent advances and open questions. Preprint {\tt arXiv:1907.05053}.
\bibitem[M]{M} H.~Matthiesen, Extremal metrics for Laplace eigenvalues in perturbed conformal classes on products. {\em The Journal of Geometric Analysis}, {\bf 29}:3 (2019), 2456--2468.
\bibitem[MP]{MP} H.~Matthiesen, R.~Petrides, A remark on the rigidity of the first conformal Steklov eigenvalue. Preprint {\tt arXiv:2006.04364}.
\bibitem[MP2]{MP2} H.~Matthiesen, R.~Petrides, Free boundary minimal surfaces of any topological type in Euclidean balls via shape optimization. Preprint {\tt arXiv:2004.06051}.
\bibitem[MS]{MS} H.~Matthiesen, A.~Siffert, Handle attachment and the normalized first eigenvalue. Preprint {\tt arXiv:1909.03105}.
\bibitem[Med]{Med} V.~Medvedev, Degenerating sequences of conformal classes and the conformal Steklov spectrum. Preprint {\tt arXiv:2004.13776}.
\bibitem[N]{NadirashviliTorus} N. Nadirashvili, Berger's isoperimetric problem and minimal immersions of surfaces. {\em Geom. Funct. Anal}, {\bf 6}:5 (1996), 877--897.
\bibitem[NS]{NS} N.~Nadirashvili, Y.~Sire, Conformal spectrum and harmonic maps. {\em Moscow Mathematical Journal} {\bf 15}:1 (2015), 123--140.
\bibitem[P1]{Pet1} R.~Petrides,  Existence and regularity of maximal metrics for the first Laplace eigenvalue on surfaces. {\em Geometric and Functional Analysis,} {\bf 24}:4 (2014), 1336--1376.
\bibitem[P2]{Pet2} R.~Petrides, On the existence of metrics which maximize Laplace eigenvalues on surfaces.
{\em Int. Math. Research Notices}, {\bf 14} (2018), 4261--4355.
\bibitem[P3]{Pet3} R.~Petrides, Maximizing Steklov eigenvalues on surfaces. {\em  Journal of Differential Geometry} {\bf 113}:1 (2019), 95--188.
\bibitem[Tak]{Takeuchi} H.~Takeuchi, Some conformal properties of $p$-harmonic maps and
a regularity for sphere-valued $p$-harmonic maps. {\em  J. Math. Soc. Japan}
{\bf 46}: 2 (1994), 217--234
\end{thebibliography}
\end{document}